\newcounter{todocounter}
\DeclareDocumentCommand\addreference{g}{\stepcounter{todocounter}\todo[color = blue!30]{\thetodocounter. Add reference\IfNoValueF{#1}{: #1}}\xspace}
\DeclareDocumentCommand\checkthis{g}{\stepcounter{todocounter}\todo[color = red!50]{\thetodocounter. Check this\IfNoValueF{#1}{: #1}}\xspace}
\DeclareDocumentCommand\fixthis{g}{\stepcounter{todocounter}\todo[color = orange!50]{\thetodocounter. Fix this\IfNoValueF{#1}{: #1}}\xspace}
\DeclareDocumentCommand\expand{g}{\stepcounter{todocounter}\todo[color = green!50]{\thetodocounter. Expand\IfNoValueF{#1}{: #1}}\xspace}
\newtheorem{theorem}{Theorem}
\newtheorem{corollary}[theorem]{Corollary}
\newtheorem{lemma}[theorem]{Lemma}
\newtheorem{proposition}[theorem]{Proposition}
\newtheorem{remark}[theorem]{Remark}
\newtheorem{alphatheorem}{Theorem}
\newtheorem{alphacorollary}[alphatheorem]{Corollary}
\crefname{alphatheorem}{theorem}{theorems}
\crefname{alphacorollary}{corollary}{corollaries}
\crefname{subfigure}{figure}{figures}
\def\gitfootnote{\gdef\@thefnmark{}\@footnotetext}
\mathchardef\mhyphen="2D
\newcommand\dash{\nobreakdash-\hspace{0pt}}
\newcommand{\dashedrightarrow}[1][2pt]{%
  \settowidth{\@tempdima}{$\rightarrow$}\rightarrow% typeset arrow
  \makebox[-\@tempdima]{\hskip-1.5ex\color{white}\rule[0.5ex]{#1}{1pt}}% typeset overlay
  \phantom{\rightarrow}% advance appropriate horizontal distance
}
\let\oldbigwedge\bigwedge
\renewcommand\bigwedge{\oldbigwedge\nolimits}
\newcommand\hilbtwo[1][X]{\ensuremath{{#1}^{[2]}}} % if we want to switch to Hilb^2X at some point, this is the place to do it
\newcommand\hilbtwoP[1][n]{\ensuremath{\mathbb{P}^{#1[2]}}}
\newcommand\hilbtwoLone{\ensuremath{L_1^{[2]}}}
\newcommand\hilbtwoLtwo{\ensuremath{L_2^{[2]}}}
\newcommand\hilbn[2]{\ensuremath{#2^{[#1]}}} % if we want to switch to Hilb^nX at some point, this is the place to do it
\newcommand\bounded{\ensuremath{\mathrm{b}}}
\newcommand\dgCat{\ensuremath{\mathrm{dgCat}}}
\newcommand\groundfield{\ensuremath{\mathbf{k}}}
\newcommand\LLL{\ensuremath{\mathbf{L}}}
\newcommand\normal{\ensuremath{\mathrm{N}}}
\newcommand\PP{\ensuremath{\mathbb{P}}}
\newcommand\RR{\ensuremath{\mathrm{R}}}
\newcommand\RRR{\ensuremath{\mathbf{R}}}
\newcommand\tangent{\ensuremath{\mathrm{T}}}
\newcommand\Var{\ensuremath{\mathrm{Var}}}
\DeclareMathOperator\Bl{Bl}
\DeclareMathOperator\derived{\mathbf{D}}
\DeclareMathOperator\fano{F}
\DeclareMathOperator\Gr{Gr}
\DeclareMathOperator\HH{H}
\DeclareMathOperator\Hom{Hom}
\DeclareMathOperator\RHom{\mathbf{R}Hom}
\DeclareMathOperator\Kzero{K_0}
\DeclareMathOperator\PGL{PGL}
\DeclareMathOperator\Pic{Pic}
\DeclareMathOperator\pr{pr}
\DeclareMathOperator\rk{rk}
\DeclareMathOperator\serre{\mathbb{S}}
\DeclareMathOperator\Sym{Sym}
\DeclareMathOperator\ZZ{Z}
\title{Derived categories of flips and cubic hypersurfaces}
\author{Pieter Belmans \and Lie Fu \and Theo Raedschelders}
\begin{document}
\maketitle

\bigskip

\begin{center}
  \emph{dedicated to the memory of Tom Nevins}
\end{center}

\bigskip

\begin{abstract}
  A classical result of Bondal--Orlov states that a standard flip in birational geometry gives rise to a fully faithful functor between derived categories of coherent sheaves. We complete their embedding into a semiorthogonal decomposition by describing the complement. As an application, we can lift the Galkin--Shinder relation in the Grothendieck ring of varieties between a smooth cubic hypersurface, its Fano variety of lines, and its Hilbert square, to a semiorthogonal decomposition.

 We also show that the Hilbert square of a cubic hypersurface of dimension at least~3 is again a Fano variety, so in particular the Fano variety of lines on a cubic hypersurface is a Fano visitor. The most interesting case is that of a cubic fourfold, where this exhibits the first higher-dimensional hyperk\"ahler variety as a Fano visitor.
\end{abstract}

\setcounter{tocdepth}{1}
%\tableofcontents
%
%\newpage

\section{Introduction}

\subsection*{Derived categories of flips}
The (conjectural) interaction between birational geometry and derived categories is largely based on the \emph{DK-hypothesis} \cite{MR1949787}. This hypothesis predicts that~K\dash equivalent varieties~$X$ and~$Y$ are~D\dash equivalent, i.e.~their derived categories of coherent sheaves are equivalent as triangulated categories. Here K-equivalence means the existence of a diagram
\begin{equation}
  \begin{tikzcd}
    & Z \arrow[ld, swap, "f"] \arrow[rd, "g"] \\
    X & & Y
  \end{tikzcd}
\end{equation}
of birational morphisms between smooth projective varieties, such that~$f^*\mathrm{K}_X\sim_{\mathrm{lin}} g^*\mathrm{K}_Y$. If on the other hand we have a~K\dash inequality, i.e.~$f^*\mathrm{K}_X+D\sim_{\mathrm{lin}} g^*\mathrm{K}_Y$ for some effective divisor~$D$ on~$Z$, then it predicts the existence of a fully faithful functor~$\derived^\bounded(X)\hookrightarrow\derived^\bounded(Y)$.

We will study the case of a specific K-inequality, where we complete the fully faithful functor predicted by the DK-hypothesis into a semiorthogonal decomposition, i.e.~we explicitly describe the complement.
The special instance of a birational transformation we are interested in is that of a (standard) flip~$\phi\colon X\dashedrightarrow X'$. We recall the setup: let~$F$ be a smooth projective variety defined over an algebraically closed field~$\groundfield$ of characteristic zero. Let~$k$ and~$\ell$ be two positive integers. Let~$\mathcal{V}$ be a vector bundle of rank~$k+1$ on~$F$, denote~$Z\coloneqq\mathbb{P}(\mathcal{V})$ and let~$\pi\colon Z\to F$ be the associated projective bundle.

Let~$X$ be a smooth projective variety containing~$Z$ as a closed subvariety such that the restriction of the normal bundle~$\normal_{Z/X}$ to each fiber of~$\pi$ is isomorphic to~$\mathcal{O}(-1)^{\oplus\ell+1}$. In other words, $\normal_{Z/X}\simeq\mathcal{O}_{\pi}(-1)\otimes\pi^{*}\mathcal{V}'$ for some vector bundle~$\mathcal{V}'$ of rank $\ell+1$ on~$F$.
%On the other side of the flip there is the closed subvariety~$Z'\simeq\mathbb{P}(\mathcal{V}')$ with~$\pi'\colon Z'\to F$, such that~$\normal_{Z'/X'}\simeq\mathcal{O}_{\pi'}(-1)\otimes\pi^{\prime *}\mathcal{V}$.

Let $\tau\colon\tilde X\to X$ be the blowup of~$X$ along the smooth center~$Z$, then the exceptional divisor~$E$ is isomorphic to~$\mathbb{P}(\mathcal{V})\times_{F}\mathbb{P}(\mathcal{V}')$ with normal bundle~$\normal_{E/\tilde X}\simeq\mathcal{O}(-1,-1)$. 
Therefore in the category of smooth algebraic spaces, we can contract~$E$ inside $\tilde X$ along the other direction: we get morphisms $E\to Z'\coloneqq\mathbb{P}(\mathcal{V}')$ and $\tilde X\to X'$. We assume that $X'$ is again a (smooth) projective variety. 
Hence the contraction of $E$ to $Z'$ amounts to blowing down $\tilde X$ to $X'$, and the normal bundle is $\normal_{Z'/X'}\simeq\mathcal{O}_{\pi'}(-1)\otimes\pi^{\prime *}\mathcal{V}$, where $\pi'\colon Z'\to F$ is the natural projection. Note that $E$ is naturally identified as the projectivization over $Z'$ of~$\normal_{Z'/X'}$. We summarize the situation in the following diagram.

\begin{equation}
  \begin{tikzcd}
    & & E \arrow[d, hook, "j"] \arrow[ddll, swap, "p"] \arrow[ddrr, "p'"] \\
    & & \tilde{X} \arrow[dl, swap, "\tau"] \arrow[dr, "\tau'"] \\
    Z \arrow[r, hook, "i"] \arrow[rrd, swap, "\pi"] & X \arrow[dashed, rr, "\phi"] & & X' & Z' \arrow[l, hook', swap, "i'"] \arrow[dll, "\pi'"] \\
    & & F
  \end{tikzcd}
  \label{equation:flip}
\end{equation}

Here the birational transform~$\phi$ is called a \emph{standard flip} (resp.~\emph{standard flop} when~$k=\ell$) while the inner triangle is its resolution; the upper two trapezoids are blowup diagrams and the outer square is cartesian. Observe that~$\pi$ and~$p'$ are~$\mathbb{P}^k$-bundles while~$\pi'$ and~$p$ are~$\mathbb{P}^\ell$-bundles. We will refer to~\eqref{equation:flip} as a \emph{standard flip diagram}.

By symmetry, we are free to assume that $k\geq\ell$. According to the general conjecture on derived categories under birational transformations discussed above, the derived category of~$X'$ is expected to be ``smaller'' than that of~$X$. Bondal--Orlov established this in the above setting of standard flips \cite[theorem~3.6]{alg-geom/9506012}.

\begin{theorem}[(Bondal--Orlov)]
  \label{thm:BO}
  Assume a standard flip diagram \eqref{equation:flip} is given, with~$k\geq\ell$. The functor
  \begin{equation}
    \RRR\tau_{*}\circ\LLL\tau^{\prime*}\colon \derived^\bounded(X')\to \derived^\bounded(X)
  \end{equation}
  is fully faithful. Moreover, if~$k=\ell$, this functor is an equivalence of triangulated categories.
\end{theorem}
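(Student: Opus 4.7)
The plan is to compare two semiorthogonal decompositions of $\derived^\bounded(\tilde X)$ coming from its two blow-up structures, and to show that after refining each via the projective bundle formula, the image $\LLL\tau'^*\derived^\bounded(X')$ sits semiorthogonally inside $\LLL\tau^*\derived^\bounded(X)$ with an explicit complement consisting of copies of $\derived^\bounded(F)$.

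\emph{Two decompositions.} Orlov's blow-up formula applied to $\tau$ and $\tau'$ yields
\begin{align*}
  \derived^\bounded(\tilde X)&=\langle\LLL\tau^*\derived^\bounded(X),\mathcal{A}_1,\ldots,\mathcal{A}_\ell\rangle,\\
  \derived^\bounded(\tilde X)&=\langle\LLL\tau'^*\derived^\bounded(X'),\mathcal{B}_1,\ldots,\mathcal{B}_k\rangle,
\end{align*}
where each $\mathcal{A}_m$ is an admissible copy of $\derived^\bounded(Z)$, realized on $E$ as $j_*(\LLL p^*(-)\otimes\mathcal{O}_E(-m,-m))$ (using $\normal_{E/\tilde X}\cong\mathcal{O}_E(-1,-1)$), and each $\mathcal{B}_m$ is an admissible copy of $\derived^\bounded(Z')$ via $j_*\circ\LLL p'^*$; in particular both $\LLL\tau^*$ and $\LLL\tau'^*$ are fully faithful. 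Refining each $\mathcal{A}_m$ via Orlov's projective bundle formula for $\pi$ (yielding $k+1$ copies of $\derived^\bounded(F)$) and each $\mathcal{B}_m$ via the analogous formula for $\pi'$ (yielding $\ell+1$ copies), the two decompositions become
\begin{equation*}
  \derived^\bounded(\tilde X)=\langle\LLL\tau^*\derived^\bounded(X),\mathcal{F}\rangle=\langle\LLL\tau'^*\derived^\bounded(X'),\mathcal{G}\rangle,
\end{equation*}
with $\mathcal{F}$ a collection of $\ell(k+1)$ admissible copies of $\derived^\bounded(F)$ of the form $j_*(\rho^*(-)\otimes\mathcal{O}_E(a,b))$ for integers $(a,b)$ with $-\ell\leq b\leq -1$ and $b\leq a\leq b+k$, and $\mathcal{G}$ a collection of $k(\ell+1)$ such copies indexed by $-k\leq a\leq -1$ and $a\leq b\leq a+\ell$, where $\rho\colon E\to F$ denotes the structure map $\pi\circ p=\pi'\circ p'$.

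\emph{Mutation and conclusion.} The heart of the argument is to mutate the tail $\mathcal{G}$ into a configuration agreeing with $\mathcal{F}$ on an $\ell(k+1)$-element subset, with $k(\ell+1)-\ell(k+1)=k-\ell$ leftover pieces. Each elementary mutation of two adjacent $\rho^*(-)\otimes\mathcal{O}_E(a,b)$ pieces is driven by an Euler-type short exact sequence on $E$ coming from the $\PP^k$-bundle $p'$ or the $\PP^\ell$-bundle $p$, and shifts the indices $(a,b)$ in a predictable way. The hypothesis $k\geq\ell$ is exactly what guarantees that the leftover $k-\ell$ pieces can be swept past $\LLL\tau'^*\derived^\bounded(X')$ all the way into the $\LLL\tau^*\derived^\bounded(X)$ block, producing a semiorthogonal decomposition
\begin{equation*}
  \LLL\tau^*\derived^\bounded(X)=\langle\LLL\tau'^*\derived^\bounded(X'),\mathcal{H}_1,\ldots,\mathcal{H}_{k-\ell}\rangle
\end{equation*}
with each $\mathcal{H}_i\simeq\derived^\bounded(F)$. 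Since $\tau$ is birational between smooth projective varieties with $\RRR\tau_*\mathcal{O}_{\tilde X}=\mathcal{O}_X$, the functor $\RRR\tau_*$ restricts to a quasi-inverse of $\LLL\tau^*\colon\derived^\bounded(X)\xrightarrow{\sim}\LLL\tau^*\derived^\bounded(X)$, so composing with the inclusion above shows that $\RRR\tau_*\circ\LLL\tau'^*$ is fully faithful. When $k=\ell$ the tail is empty, the inclusion is an equality, and $\RRR\tau_*\circ\LLL\tau'^*$ is an equivalence.

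\emph{Main obstacle.} The technical core is the mutation step: one must verify, by lattice-point bookkeeping of the $(a,b)$-indices and consistent use of the Euler sequences on $p$ and $p'$, that the tail $\mathcal{G}$ can indeed be rearranged into $\mathcal{F}$ together with the $k-\ell$ extra pieces $\mathcal{H}_i$ in the correct semiorthogonal position. It is precisely at this bookkeeping step that the numerical threshold $k\geq\ell$ emerges, aligning with the general expectation that the derived category of $X$ is ``bigger'' than that of $X'$ under a K-inequality.
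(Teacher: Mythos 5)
Your overall strategy---compare the two blowup decompositions of $\derived^\bounded(\tilde X)$, refine each by the projective bundle formula, and mutate one into the other---is in spirit the strategy the paper uses for the \emph{full} semiorthogonal decomposition (\cref{theorem:flip}), carried out explicitly only for $\ell=1$ in \cref{app:l1}. But as a proof of \cref{thm:BO} it has a genuine gap: the entire mathematical content is concentrated in the step you label ``lattice-point bookkeeping,'' and that step is neither routine nor carried out. The required semiorthogonality relations among the pieces $j_*(\rho^*(-)\otimes\mathcal{O}_E(a,b))$ do not follow from an ``Euler-type'' sequence alone; they rest on a nontrivial vanishing statement (\cref{vanishing}), proved via the triangle $\mathcal{O}(a+1,b+1)[1]\to\LLL j^*\circ j_*(\mathcal{O}(a,b))\to\mathcal{O}(a,b)$ and the cohomology of line bundles on the two projective bundles, and the ``cancellation'' of the surplus pieces is a delicate double induction (\cref{prop:Generation}). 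Moreover, for general $\ell$ the paper is only able to establish the resulting decomposition \emph{after} applying $\RRR\tau_*$, not as an identity of subcategories of $\derived^\bounded(\tilde X)$ as you assert; the stronger statement at the level of $\tilde X$ is obtained only for $\ell=1$.

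There is also a concrete error in the target of your mutation. The claimed decomposition $\LLL\tau^*\derived^\bounded(X)=\langle\LLL\tau'^*\derived^\bounded(X'),\mathcal{H}_1,\dots,\mathcal{H}_{k-\ell}\rangle$ cannot hold as written, because $\LLL\tau'^*\derived^\bounded(X')$ is \emph{not} contained in $\LLL\tau^*\derived^\bounded(X)$: for $\mathcal{E}\in\LLL\tau'^*\derived^\bounded(X')$ the projection $\LLL\tau^*\circ\RRR\tau_*\mathcal{E}\to\mathcal{E}$ has a nonzero cone $\mathcal{E}'$ in general (this cone is precisely the object studied in the proof of \cref{SO-X'F}). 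What does land inside $\LLL\tau^*\derived^\bounded(X)$ is a right mutation of $\LLL\tau'^*\derived^\bounded(X')$ through certain copies of $\derived^\bounded(F)$, and to deduce fully faithfulness of $\RRR\tau_*\circ\LLL\tau'^*$ from that you additionally need that $\RRR\tau_*$ annihilates the categories being mutated through (the content of \cref{lem:MutateThruKer}). Note finally that for \cref{thm:BO} alone the paper takes a much shorter route (\cref{rmk:ProofOfBO}): by adjunction one reduces to $\Hom(\mathcal{E}',\mathcal{F})=0$ for $\mathcal{F}\in\LLL\tau'^*\derived^\bounded(X')$, and this single Hom-vanishing follows directly from \cref{vanishing} without invoking the full mutation and generation machinery.
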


The first goal of this paper is to identify the complement of~$\derived^\bounded(X')$ inside~$\derived^\bounded(X)$, thus completing \cref{thm:BO} into a semiorthogonal decomposition. This specific question was stated in \cite[remarque~4.5]{MR2296422}, and the following theorem accomplishes this.

\begin{alphatheorem}
  \label{theorem:flip}
  Assume a standard flip diagram \eqref{equation:flip} is given, with~$k>\ell$.
  \begin{enumerate}[label=(\roman*)]
    \item\label{enum:part1}
      For every integer~$m$, the functor
      \begin{equation}
        \label{equation:Phi-definition}
        \Phi_m\colon \derived^\bounded(F)\to \derived^\bounded(X):
        \mathcal{E} \mapsto i_{*}(\pi^{*}(\mathcal{E})\otimes \mathcal{O}_{\pi}(m)),
      \end{equation}
      is fully faithful.

    \item\label{enumerate:part-2}
      We have the following semiorthogonal decomposition of~$\derived^\bounded(X)$:
      \begin{equation}
        \label{eqn:goal}
        \derived^\bounded(X)=\langle\Phi_{-k+\ell}(\derived^\bounded(F)),\dots, \Phi_{-1}(\derived^\bounded(F)), \RRR\tau_*\circ\LLL\tau^{\prime*}\derived^\bounded(X')\rangle.
      \end{equation}
  \end{enumerate}
\end{alphatheorem}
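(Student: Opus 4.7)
The plan is to lift the comparison to the common blowup $\tilde X$ and to compare the two semiorthogonal decompositions of $\derived^\bounded(\tilde X)$ produced by Orlov's blowup formula applied respectively to $\tau$ (center $Z$ of codimension $\ell+1$) and to $\tau'$ (center $Z'$ of codimension $k+1$), refined further by the projective bundle formulas for $\pi\colon Z\to F$ and $\pi'\colon Z'\to F$. Both refinements present $\derived^\bounded(\tilde X)$ as one distinguished block ($\derived^\bounded(X)$ on the $\tau$-side, $\derived^\bounded(X')$ on the $\tau'$-side) together with an array of copies of $\derived^\bounded(F)$ embedded as $\mathcal{E}\mapsto j_*((\pi p)^*\mathcal{E}\otimes\mathcal{O}_E(a,b))$, where $\mathcal{O}_E(a,b)\coloneqq p^*\mathcal{O}_\pi(a)\otimes p^{\prime*}\mathcal{O}_{\pi'}(b)$. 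Since $\pi p=\pi' p'$ on $E$, both arrays live on a common bigraded lattice: the $\tau$-refinement fills a $(k+1)\times\ell$ rectangle of twists and the $\tau'$-refinement fills a $k\times(\ell+1)$ rectangle, agreeing on a $k\times\ell$ common interior and differing by $k-\ell$ boxes in total, which should account for the blocks $\Phi_{-k+\ell}(\derived^\bounded(F)),\ldots,\Phi_{-1}(\derived^\bounded(F))$ predicted by the theorem.

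\textbf{Part (i) and semiorthogonality among the $\Phi_m$.} Fully faithfulness of $\Phi_m$ reduces to computing $\Phi_m^R\Phi_m$ via the right adjoint $\Phi_m^R(-)=\RRR\pi_*(i^!(-)\otimes\mathcal{O}_\pi(-m))$. Combining Grothendieck duality $i^!\simeq\LLL i^*(-)\otimes\det\normal_{Z/X}[-\ell-1]$, the identity $\det\normal_{Z/X}\simeq\mathcal{O}_\pi(-\ell-1)\otimes\pi^*\det\mathcal{V}'$ coming from the normal bundle assumption, and the Koszul presentation of $\LLL i^*i_*$ with graded pieces $\bigwedge^q\normal^\vee_{Z/X}[q]$, the composition collapses to a sum of terms of the form $\RRR\pi_*\mathcal{O}_\pi(d)$ with exponents $d$ filling a window of $\ell+2$ consecutive integers. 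Under the hypothesis $k>\ell$ all but one entry of this window falls inside the vanishing range $\{-k,\ldots,-1\}$ of $\RRR\pi_*$ on the $\mathbb{P}^k$-bundle $\pi$, so the spectral sequence degenerates trivially and the lone survivor yields $\Phi_m^R\Phi_m\simeq\identity$. The analogous computation of $\Phi_{m'}^R\Phi_m$ for $-k+\ell\le m<m'\le-1$ shifts the entire window into the vanishing range, giving $\Hom(\Phi_{m'},\Phi_m)=0$ as required. Semiorthogonality with the Bondal--Orlov block $\RRR\tau_*\LLL\tau^{\prime*}\derived^\bounded(X')$ is checked after lifting to $\tilde X$: the Tor-independent base change $\LLL\tau^*i_*\simeq j_*\LLL p^*$ places $\LLL\tau^*\Phi_m(\mathcal{E})$ into bigraded slots $\mathcal{O}_E(m,0)$ on $E$, and pushing forward by $\RRR\tau'_*$ reduces the required vanishing to $\RRR p'_*\mathcal{O}_{p'}(m)=0$ for $m\in\{-k,\ldots,-1\}$, which holds on the $\mathbb{P}^k$-bundle $p'\colon E\to Z'$.

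\textbf{Generation and main obstacle.} Generation is equivalent, via the fully faithful functor $\LLL\tau^*$, to showing that the pullbacks of the right-hand side of \eqref{eqn:goal} together with the Orlov $\tau$-blocks of $\derived^\bounded(\tilde X)$ span the whole category. This is arranged by a sequence of mutations transforming the $\tau'$-refined decomposition of $\derived^\bounded(\tilde X)$ into the $\tau$-refined one: the $k\ell$ common boxes of the two rectangles match up after twist shifts, while the $k-\ell$ surplus boxes on the $\tau'$-side should survive, after passing to the $\tau$-side, as precisely $\LLL\tau^*\Phi_{-k+\ell}(\derived^\bounded(F)),\ldots,\LLL\tau^*\Phi_{-1}(\derived^\bounded(F))$. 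The main obstacle is controlling this mutation argument: moving entire $\derived^\bounded(F)$-blocks past $\LLL\tau^*\derived^\bounded(X)$ and $\LLL\tau^{\prime*}\derived^\bounded(X')$ requires precise bookkeeping of the bigraded twists $\mathcal{O}_E(a,b)$ and of the Serre-functor corrections arising when crossing the Bondal--Orlov block, and it is here that one must verify the surviving blocks land at exactly the twists $m\in\{-k+\ell,\ldots,-1\}$ predicted by the theorem rather than at some shifted or dualized variant; all other steps reduce to the Koszul-pushforward vanishing of Part~(i).
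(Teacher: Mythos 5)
Your overall strategy --- lifting everything to $\tilde X$ and comparing the two Orlov blowup decompositions \eqref{eqn:Blup1} and \eqref{eqn:Blup2}, each refined by a projective bundle formula into a bigraded array of copies of $\derived^\bounded(F)$ of sizes $(k+1)\times\ell$ and $k\times(\ell+1)$ --- is exactly the one the paper follows, and your treatment of part~(i) and of the semiorthogonality among the blocks $\Phi_m(\derived^\bounded(F))$ is correct: the Koszul filtration of $i^!i_*$ with graded pieces $\bigwedge^q\normal_{Z/X}\cong\mathcal{O}_\pi(-q)\otimes\pi^*\bigwedge^q\mathcal{V}'$, $0\leq q\leq\ell+1$, together with the vanishing of $\RRR\pi_*\mathcal{O}_\pi(d)$ for $d\in[-k,-1]$, is in substance the computation of \cref{prop:FF} and \cref{SO-FF}. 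The first genuine gap is in your semiorthogonality of $\Phi_m(\derived^\bounded(F))$ against the Bondal--Orlov block. The square with vertices $E,\tilde X,Z,X$ is cartesian but \emph{not} Tor-independent: $\dim E=\dim Z+\ell$ exceeds the expected dimension $\dim Z$ as soon as $\ell\geq1$, so there is excess intersection and $\LLL\tau^*i_*\not\simeq j_*\LLL p^*$. Hence $\LLL\tau^*\Phi_m(\mathcal{E})$ is not concentrated in the single bigraded slot $\mathcal{O}(m,0)$ but carries additional Tor-layers, and your one-line reduction to $\RRR p'_*\mathcal{O}_{p'}(m)=0$ does not go through. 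This is precisely why the paper's \cref{SO-X'F} works in the opposite direction (only $\RRR\tau_*\mathcal{A}(m,0)=\Phi_m(\derived^\bounded(F))$ is used, which needs no base change), analysing the projection $\LLL\tau^*\circ\RRR\tau_*\mathcal{E}$ of $\mathcal{E}\in\LLL\tau'^*\derived^\bounded(X')$ with respect to the rearranged decomposition \eqref{equation:before}; that rearrangement in turn rests on the four-case vanishing statement of \cref{vanishing}, whose boundary cases $a_1-a_2=k$ and $b_1-b_2=\ell$ you have not isolated but which are essential.

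The second gap is that the generation half of part~(ii) is not actually proved: you correctly identify the mutation bookkeeping as ``the main obstacle'' but leave it unresolved, and that is where most of the content of the theorem sits. In the paper this is \cref{prop:Generation}, a double induction showing that the projections $\mathcal{E}(a,b)$ of a test object onto the surplus blocks vanish; its key step is not formal mutation arithmetic but a non-degeneracy argument, exhibiting an explicit morphism $\psi$ built from a nonzero class $\alpha\in\Hom(\mathcal{O}_\pi(a+1),\mathcal{O}_\pi(a-k)[k])$ and using the orthogonality \eqref{eqn:Test} to force $\mathcal{E}(a,b)=0$. Without this (or the explicit mutation chain of \cref{app:l1}, which the paper only carries out for $\ell=1$, where the Serre-functor correction $\serre_{\tilde X}^{-1}\mathcal{A}(-k,-2)=\mathcal{A}(0,-1)$ can be computed by hand), the claim that the $k-\ell$ surplus boxes survive at exactly the twists $m\in\{-k+\ell,\dots,-1\}$ remains unverified.
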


In fact, part~\ref{enum:part1} of \cref{theorem:flip} follows from a more general fully faithfulness criterion for EZ-type functors which might be of independent interest, see \cref{prop:FF}.

For the application in \cref{thm:cubic} below we only need \cref{theorem:flip}\ref{enumerate:part-2} for~$\ell=1$. In fact, in this case a more direct proof is possible, which moreover gives the slightly stronger result that compares the category~$\derived^\bounded(X)$ and the components of \eqref{eqn:goal} as subcategories in~$\derived^\bounded(\tilde X)$, and not only after applying~$\RRR\tau_*$. This will be discussed in \cref{app:l1}.

Note also that formally inserting~$\ell=0$ into \cref{theorem:flip} one recovers Orlov's blowup formula \cite[theorem~4.3]{MR1208153}. In this paper we restrict to positive $\ell$ however.

More recently, \cite{MR3918435} and \cite{1811.12525v1} discusses the behavior of derived categories under flops (but not flips), and \cite{1910.06730v1} discusses the behavior of Chow groups under standard flips. Kawamata has proven analogous results in the toric and toroidal case, see \cite[theorem~6.1]{MR2280493}, \cite[theorem~1]{MR3079267} and \cite[theorem~1]{MR3454097}.

\begin{remark}
  \label{remark:mutation-sod}
  By mutating the first few terms of \eqref{eqn:goal} to the far right, which can be computed by applying the inverse of the Serre functor, we get a series of similar semiorthogonal decompositions: for any integer $0\leq m\leq k-\ell$, we have
  \begin{equation}
    \begin{aligned}
      \derived^\bounded(X)
      &=
      \langle
        \Phi_{-m}(\derived^\bounded(F)),
        \dots,
        \Phi_{-1}(\derived^\bounded(F)),
        \RRR\tau_{*}\circ\LLL\tau'^{*}(\derived^\bounded(X')), \\
        &\qquad\qquad
        \Phi_{0}(\derived^\bounded(F)),
        \dots,
        \Phi_{k-\ell-m-1}(\derived^\bounded(F))
      \rangle.
    \end{aligned}
  \end{equation}
\end{remark}

\subsection*{Smooth cubic hypersurfaces: the Galkin--Shinder--Voisin diagram}
As an illustration of the usefulness of \cref{theorem:flip}, we will consider a smooth cubic hypersurface~$Y\subset\PP^{n+1}$. Building upon insights of Galkin--Shinder \cite{1405.5154v2} and Voisin \cite{MR3646872}, we show in \cref{corollary:quadratic-fano-standard-flip} that the quadratic Fano correspondence
\begin{equation}
  \begin{tikzcd}
    P_2 \arrow[d] \arrow[r, hook] & \hilbtwo[Y] \\
    \fano(Y)
  \end{tikzcd}
\end{equation}
obtained from the universal family (or Fano correspondence)
\begin{equation}
  \begin{tikzcd}
    P \arrow[d] \arrow[r] & Y \\
    \fano(Y)
  \end{tikzcd}
\end{equation}
by taking the relative Hilbert square of~$P\to Y$ can be made into a standard flip diagram \eqref{equation:flip} where~$X$ is the Hilbert square of~$Y$ and~$F$ the Fano variety~$\fano(Y)$ of lines on~$Y$. The role of~$X'$ is played by a certain~$\mathbb{P}^n$\dash bundle over~$Y$. For more on the terminology and notation, one is referred to \cref{section:quadratic-fano-correspondence}. An excellent reference for more background on cubic hypersurfaces is \cite{huybrechts}.

This brings us to the second main result.
\begin{alphatheorem}
  \label{thm:cubic}
  Let~$Y\subset \PP^{n+1}$ be a smooth cubic hypersurface. Let~$\fano(Y)$ be its Fano variety of lines and~$\hilbtwo[Y]$ be its Hilbert square. Then there is a semiorthogonal decomposition
  \begin{equation}
    \label{equation:sod-quadratic-fano}
    \derived^\bounded(\hilbtwo[Y])=\langle \derived^\bounded(\fano(Y)), \underbrace{\derived^\bounded(Y),\ldots,\derived^\bounded(Y)}_{\text{$n+1$ copies}}\rangle.
  \end{equation}
  For the precise form of the functors one is referred to \cref{section:quadratic-fano-correspondence}.
\end{alphatheorem}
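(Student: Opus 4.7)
The plan is to derive \cref{thm:cubic} as a direct categorical consequence of \cref{theorem:flip}, applied to the geometric setup furnished by the quadratic Fano correspondence, and then to expand the remaining~$X'$-factor via Orlov's projective bundle formula.

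First, I would read off the numerical parameters of the flip. Since $\dim\fano(Y)=2n-4$ and the universal $\PP^1$-bundle $P\to\fano(Y)$ has Hilbert square $\hilbtwo[\PP^1]\simeq\PP^2$, the relative Hilbert square $P_2\to\fano(Y)$ is a $\PP^2$-bundle of dimension $2n-2$, so $P_2\subset\hilbtwo[Y]$ sits in codimension two. This fits a standard flip diagram~\eqref{equation:flip} with
\begin{equation}
  F=\fano(Y),\quad X=\hilbtwo[Y],\quad Z=P_2,\quad k=2,\quad \ell=1,
\end{equation}
for which the hypothesis $k>\ell$ of \cref{theorem:flip} is satisfied. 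The nontrivial geometric input---the computation of $\normal_{P_2/\hilbtwo[Y]}$ along the $\pi$-fibers and the identification of the flipped variety $X'$ with an explicit $\PP^n$-bundle over $Y$---is exactly the content of \cref{corollary:quadratic-fano-standard-flip}, resting on the Galkin--Shinder/Voisin analysis of the Hilbert square of a smooth cubic.

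Second, I would apply \cref{theorem:flip}\ref{enumerate:part-2} with these parameters. Because $k-\ell=1$, only one copy of $\derived^\bounded(\fano(Y))$ appears in~\eqref{eqn:goal}, which collapses to
\begin{equation}
  \derived^\bounded(\hilbtwo[Y])
  =
  \langle \Phi_{-1}(\derived^\bounded(\fano(Y))),\ \RRR\tau_{*}\circ\LLL\tau'^{*}(\derived^\bounded(X'))\rangle.
\end{equation}
Full faithfulness of $\Phi_{-1}$ is guaranteed by \cref{theorem:flip}\ref{enum:part1}, and that of $\RRR\tau_{*}\circ\LLL\tau'^{*}$ by \cref{thm:BO}, so this is a genuine semiorthogonal decomposition. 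Orlov's projective bundle formula, applied to the $\PP^n$-bundle $X'\to Y$, produces
\begin{equation}
  \derived^\bounded(X')=\langle \underbrace{\derived^\bounded(Y),\ldots,\derived^\bounded(Y)}_{n+1\text{ copies}}\rangle,
\end{equation}
and since fully faithful functors transport semiorthogonal decompositions of their source to their image, substituting this into the previous display yields exactly~\eqref{equation:sod-quadratic-fano}.

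The main obstacle is not categorical but geometric: once \cref{corollary:quadratic-fano-standard-flip} is in place the deduction above is essentially formal. The real work lies in verifying the normal bundle splitting of $P_2\hookrightarrow\hilbtwo[Y]$ on each $\PP^2$-fiber and in identifying the target of the flip with the stated $\PP^n$-bundle over $Y$; these checks are precisely what make the quadratic Fano correspondence fit the template of \eqref{equation:flip}, at which point \cref{theorem:flip} and Orlov's formula assemble into \cref{thm:cubic} without further input.
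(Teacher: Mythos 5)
Your proposal is correct and follows the same route as the paper: invoke \cref{corollary:quadratic-fano-standard-flip} to recognise \eqref{diag:cubic} as a standard flip diagram with~$k=2$, $\ell=1$, apply \cref{theorem:flip}\ref{enumerate:part-2} to get~$\derived^\bounded(\hilbtwo[Y])=\langle\Phi_{-1}(\derived^\bounded(\fano(Y))),\RRR\tau_{*}\circ\LLL\tau^{\prime*}\derived^\bounded(P_Y)\rangle$, and then decompose the~$\PP^n$\dash bundle~$P_Y\to Y$ via Orlov's projective bundle formula. Your identification of the numerical parameters and of where the genuine geometric work lies (the normal bundle computation underlying \cref{corollary:quadratic-fano-standard-flip}) matches the paper exactly.
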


This is a derived categorical version, of a comparison between the Fano variety of lines and the Hilbert square of a cubic hypersurface, which was known to hold in various contexts, such as cohomology, classes in the Grothendieck ring of varieties, or (rational) Chow motives. In \cref{section:quadratic-fano-correspondence} we discuss these.

\subsection*{Fano visitors}
Using \cref{thm:cubic} we can study the Fano visitor problem for the Fano variety of lines on a cubic hypersurface. Recall that a smooth projective variety~$X$ is said to be a \emph{Fano visitor} if there exists a smooth projective Fano variety~$Y$ and a fully faithful functor~$\derived^\bounded(X)\hookrightarrow\derived^\bounded(Y)$. In this case we call~$Y$ a \emph{Fano host} for~$X$.

Bondal raised the question whether every smooth projective variety is a Fano visitor. A positive answer would imply that (additive invariants of) derived categories of Fano varieties are as complicated as those of arbitrary varieties. This question has been addressed in \cite{MR3628227,MR3490767,MR3954042,MR3713871,MR3954308,MR3764066} for various families of varieties, in particular it is known that curves, Enriques surfaces and complete intersections are Fano visitors.

To study the Fano visitor problem for Fano varieties of lines\footnote{To avoid any confusion with other usages of the name Fano, we will always write ``Fano variety of lines'' in full when we refer to~$\fano(Y)$, where~$Y$ is a cubic hypersurface.}, we prove the following result.
\begin{alphatheorem}
  \label{theorem:fano}
  Let~$n\geq 3$. Let~$Y$ be~$\mathbb{P}^n$, or a smooth quadric (resp.~cubic) hypersurface in~$\mathbb{P}^{n+1}$. Then~$\hilbtwo[Y]$ is Fano.
\end{alphatheorem}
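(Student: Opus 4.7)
The plan is to verify ampleness of $-\mathrm{K}_{\hilbtwo[Y]}$ by working on the double cover $\pi\colon\widetilde{Y\times Y}\to\hilbtwo[Y]$, where $\sigma\colon\widetilde{Y\times Y}\to Y\times Y$ is the blowup of the diagonal $\Delta$ with exceptional divisor $E$, and $\pi$ is the quotient by the swap involution. This involution acts as $-1$ on $\normal_{\Delta/Y\times Y}\cong \tangent_Y$ and therefore fixes $E$ pointwise, so $\pi$ is a finite double cover ramified along $E$. Combining the blowup formula $\mathrm{K}_{\widetilde{Y\times Y}}=\sigma^*\mathrm{K}_{Y\times Y}+(n-1)E$ with Hurwitz $\mathrm{K}_{\widetilde{Y\times Y}}=\pi^*\mathrm{K}_{\hilbtwo[Y]}+E$ yields
\[
  \pi^*\mathrm{K}_{\hilbtwo[Y]}=\sigma^*\mathrm{K}_{Y\times Y}+(n-2)E,
\]
and since $\pi$ is finite surjective, it suffices to show that $\pi^*(-\mathrm{K}_{\hilbtwo[Y]})$ is ample on $\widetilde{Y\times Y}$.

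Let $H$ be the hyperplane class on $Y$ restricted from the ambient projective space, so that $-\mathrm{K}_Y=aH$ with $a\in\{n+1,n,n-1\}$ in the three cases. Set $D_2\coloneqq\sigma^*(p_1^*H+p_2^*H)$ and $D_1\coloneqq D_2-E$. The canonical bundle formula above rearranges into
\[
  \pi^*(-\mathrm{K}_{\hilbtwo[Y]})\;=\;aD_2-(n-2)E\;=\;\bigl(a-(n-2)\bigr)\,D_2+(n-2)\,D_1,
\]
and under the hypothesis $n\geq 3$ both coefficients are strictly positive, since $a-(n-2)\in\{3,2,1\}$ and $n-2\geq 1$. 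Now $D_2$ is nef as the $\sigma$-pullback of an ample class on $Y\times Y$. The class $D_1$ is nef via the \emph{span morphism}: the antisymmetric sections $s\otimes t-t\otimes s$ for $s,t\in\HH^0(Y,H)$ are sections of $\mathcal{O}_{Y\times Y}(p_1^*H+p_2^*H)\otimes\mathcal{I}_\Delta$ whose $\sigma$-pullbacks give an everywhere-defined morphism
\[
  \widetilde{Y\times Y}\longrightarrow\Gr(2,\HH^0(Y,H))
\]
sending $(p,q)$ to the linear span $\langle p,q\rangle$ (extended to $E$ via tangent directions); very ampleness of $H$ in each of the three cases guarantees base-point freeness, and by construction $D_1$ is the pullback of the Pl\"ucker class, hence nef.

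To conclude ampleness via Kleiman's criterion, it remains to check that $D_1$ and $D_2$ have no common zero in $\overline{\mathrm{NE}}(\widetilde{Y\times Y})\setminus\{0\}$. For any irreducible curve $C$, $D_2\cdot C=0$ forces $\sigma_*[C]=0$, i.e.~$C$ is contained in a $\PP^{n-1}$-fiber of $E\to Y$; for such $C$, the identity $\mathcal{O}_E(E)|_{\text{fiber}}=\mathcal{O}_{\PP^{n-1}}(-1)$ gives $E\cdot C=-\deg(C)$, so $D_1\cdot C=\deg(C)>0$. Hence on the extremal face $\{D_2=0\}$ of the cone of curves, $D_1$ is strictly positive, and the positive combination $\pi^*(-\mathrm{K}_{\hilbtwo[Y]})$ is ample. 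The main obstacle is the nefness of $D_1$, which rests on producing the span morphism to the Grassmannian; once this is in place, the remainder is a formal computation with the canonical bundle formula and intersection numbers against fiber classes.
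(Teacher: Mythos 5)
Your proof is correct and follows the same overall architecture as the paper's: pass to the double cover $\Bl_\Delta(Y\times Y)\to\hilbtwo[Y]$, combine the blowup and ramified-double-cover canonical bundle formulas to get $q^*(-\mathrm{K}_{\hilbtwo[Y]})=(n+2-d)D_2-(n-2)E$, and use the linear system of antisymmetric sections of $H\boxtimes H$ vanishing on the diagonal (the span morphism to the Grassmannian) to control the class $D_1=D_2-E$. Where you diverge is the final ampleness verification. The paper shows that the product morphism $\varphi=(\tau,\mathrm{span})$ to $\mathbb{P}^{n+1}\times\mathbb{P}^{n+1}\times\Gr(2,n+2)$ is quasi-finite, hence finite, and identifies $q^*(\omega^\vee_{\hilbtwo[Y]})$ with $\varphi^*\mathcal{O}(4-d,4-d,n-2)$, concluding by the fact that finite pullback preserves ampleness. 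You instead write the same class as the positive combination $(4-d)D_2+(n-2)D_1$ of two nef divisors and apply Kleiman's criterion, observing that the only curve classes killed by $D_2$ are multiples of the line class in a fiber of $E\to\Delta$, on which $D_1\cdot C=-E\cdot C>0$. Your route only needs base-point-freeness of the span system rather than finiteness of $\varphi$, at the cost of a cone-of-curves argument; to make the Kleiman step airtight you should note explicitly that the face $\{D_2=0\}$ of $\overline{\mathrm{NE}}$ is exactly the ray spanned by the fiber line class, because $\ker\sigma_*$ on $N_1$ is one-dimensional and a closed cone admitting an ample functional contains no line. Both arguments yield the same numerical conclusion $4-d\geq 1$, $n-2\geq 1$, i.e.~$d\leq 3$ and $n\geq 3$.
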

This is in contrast to the case of smooth projective surface~$S$, in which case the Hilbert scheme~$\hilbn{m}{S}$ of~$m$ points is never a Fano variety.
%The reason is that the Hilbert--Chow morphism is a crepant resolution of singularities, hence the anticanonical divisor cannot be ample on the exceptional divisor.
We immediately obtain the following corollary to \cref{thm:cubic} and \cref{theorem:fano}.
\begin{alphacorollary}
  \label{corollary:fano-visitor}
  Let~$Y$ be a smooth cubic hypersurface of dimension~$n=3,4$. Then the Fano variety of lines~$\fano(Y)$ is a Fano visitor with Fano host~$\hilbtwo[Y]$.
\end{alphacorollary}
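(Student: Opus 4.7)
The plan is to combine \cref{thm:cubic} and \cref{theorem:fano} directly, since the corollary is an essentially formal consequence of these two results once one checks that the hypotheses line up. First, I would note that $n = 3$ and $n = 4$ both satisfy $n \geq 3$, so that \cref{theorem:fano} applies to any smooth cubic hypersurface $Y \subset \PP^{n+1}$ of these dimensions and guarantees that $\hilbtwo[Y]$ is a Fano variety. Since $Y$ is smooth projective, so is $\hilbtwo[Y]$ (its Hilbert square is constructed as a smooth quotient of a blowup, as reviewed in \cref{section:quadratic-fano-correspondence}), so $\hilbtwo[Y]$ is a legitimate candidate for a Fano host.

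Next, I would invoke \cref{thm:cubic}, which provides the semiorthogonal decomposition
\begin{equation}
  \derived^\bounded(\hilbtwo[Y]) = \langle \derived^\bounded(\fano(Y)), \underbrace{\derived^\bounded(Y), \ldots, \derived^\bounded(Y)}_{n+1 \text{ copies}} \rangle.
\end{equation}
The existence of this semiorthogonal decomposition means, in particular, that the first component yields a fully faithful embedding $\derived^\bounded(\fano(Y)) \hookrightarrow \derived^\bounded(\hilbtwo[Y])$. Combined with the previous paragraph, this is exactly the definition of $\fano(Y)$ being a Fano visitor with Fano host $\hilbtwo[Y]$, which concludes the argument.

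There is no genuine obstacle here, as all the substantive work is carried out in the proofs of \cref{thm:cubic} and \cref{theorem:fano}. The only thing to verify is the compatibility of the dimension ranges: \cref{theorem:fano} requires $n \geq 3$, while \cref{thm:cubic} is stated for every $n$, so the binding constraint is Fano-ness of $\hilbtwo[Y]$. The restriction to $n = 3, 4$ in the corollary (rather than all $n \geq 3$) presumably reflects the authors' interest in the geometrically richest cases, where $\fano(Y)$ is respectively a Fano threefold and a hyperk\"ahler fourfold.
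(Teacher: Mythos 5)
Your proposal is correct and is exactly the paper's argument: the corollary follows immediately by combining \cref{thm:cubic} (which exhibits $\derived^\bounded(\fano(Y))$ as an admissible subcategory of $\derived^\bounded(\hilbtwo[Y])$) with \cref{theorem:fano} (which shows $\hilbtwo[Y]$ is Fano for $n\geq 3$). The only small inaccuracy is your closing speculation: the restriction to $n=3,4$ is not merely a matter of taste, but reflects that for $n\geq 5$ the Fano variety of lines is itself a Fano variety, so the Fano visitor problem is trivial in that range.
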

This is the first construction (to our knowledge) of a Fano host for a surface of general type which is not a complete intersection or a product of curves (for~$n=3$), and the first construction of a Fano host for a (higher-dimensional) hyperk\"ahler variety (for~$n=4$). For~$n\geq 5$ the Fano variety of lines is itself a Fano variety, so the Fano visitor problem is trivial.

\paragraph{Conventions}\quad
Throughout we will assume that~$\groundfield$ is an algebraically closed field of characteristic~0.

\paragraph{Acknowledgements}\quad
We want to thank Sergey Galkin, Daniel Huybrechts, Qingyuan Jiang, Robert Laterveer, Renjie Lyu, Will Sawin and Mingmin Shen for helpful discussions.

The results in this article were conceived during the ``Problems and recent developments in hyperk\"ahler geometry'' research seminar, organised by Daniel Huybrechts, and we heartily thank him. % phrased from my (= Pieter's) point of view, feel free to modify

The first author was partially supported by a postdoctoral fellowship from the Research Foundation---Flanders (FWO).
The second author is supported by the Agence Nationale de la Recherche (ANR) under projects ANR-20-CE40-0023 and ANR-16-CE40-0011. He was also supported the Radboud Excellence Initiative program.
The third author is supported by a postdoctoral fellowship from the Research Foundation---Flanders (FWO).

\section{A fully faithfulness criterion for EZ-type functors}
Let~$F, Z$ and~$X$ be smooth projective varieties. Assume there is a smooth proper morphism~$\pi\colon Z\to F$ and a closed immersion~$i\colon Z\to X$ as in the following diagram.
\begin{equation}
  \label{diag:EZ}
  \begin{tikzcd}
    Z \arrow[r, hook, "i"] \arrow[d, "\pi"] & X \\
    F
  \end{tikzcd}
\end{equation}
Let~$c$ be the codimension of~$Z$ in~$X$. The following result is inspired by \cite[theorem~2.1]{MR2126495}, in the formulation of \cite[theorem~1.3]{MR3918435}. The terminology \emph{EZ-type} is taken from \cite{MR2126495}, where the diagram \eqref{diag:EZ} is denoted with~$E$ instead of~$F$.

\begin{proposition}
  \label{prop:FF}
  Assume that for every fiber~$P$ of~$\pi$ and all integers~$p, q$ with~$p+q>0$,
  \begin{equation}
    \label{eq:vanishing-cond}
    \HH^p(P,\bigwedge^{q}\mathcal{N}')=0,
  \end{equation}
  where~$\mathcal{N}'\coloneqq\normal_{Z/X}|_{P}$ is the restriction of the normal bundle of~$Z$ in~$X$ to~$P$.
  Then for every line bundle~$\mathcal{L}$ on~$Z$, the functor
  \begin{equation}
    \Phi\colon\derived^\bounded(F)\to\derived^\bounded(X):\mathcal{E} \mapsto i_*(\pi^*(\mathcal{E})\otimes\mathcal{L}),
  \end{equation}
  is fully faithful.
\end{proposition}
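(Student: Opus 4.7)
The plan is to establish fully faithfulness of~$\Phi$ by verifying that the canonical comparison map
\begin{equation*}
  \RHom_F(\mathcal{E}_1, \mathcal{E}_2) \longrightarrow \RHom_X(\Phi(\mathcal{E}_1), \Phi(\mathcal{E}_2))
\end{equation*}
is a quasi-isomorphism for all $\mathcal{E}_1, \mathcal{E}_2 \in \derived^\bounded(F)$, reducing to locally free $\mathcal{E}_i$ by resolution. Since $i\colon Z \hookrightarrow X$ is a closed immersion of smooth varieties, the standard computation from the Koszul resolution of~$i_*\mathcal{O}_Z$ gives
\begin{equation*}
  \sheafExt^q_{\mathcal{O}_X}(i_*A, i_*B) \cong i_*\bigl(\sheafHom_{\mathcal{O}_Z}(A,B) \otimes \bigwedge^q \normal_{Z/X}\bigr)
\end{equation*}
for locally free~$A, B$ on~$Z$. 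Substituting $A = \pi^*\mathcal{E}_1 \otimes \mathcal{L}$ and $B = \pi^*\mathcal{E}_2 \otimes \mathcal{L}$ cancels the $\mathcal{L}$-twist inside the inner $\sheafHom$ and produces $i_*(\pi^*\sheafHom_F(\mathcal{E}_1,\mathcal{E}_2) \otimes \bigwedge^q\normal_{Z/X})$.

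Feeding this into the local-to-global Ext spectral sequence, and then using the Leray spectral sequence for~$\pi$ together with the projection formula, yields
\begin{equation*}
  E_2^{p,q} \;=\; \HH^p\bigl(F,\; \RsheafHom_F(\mathcal{E}_1,\mathcal{E}_2) \otimes \RRR\pi_*\textstyle\bigwedge^q \normal_{Z/X}\bigr) \;\Longrightarrow\; \Ext^{p+q}_X(\Phi(\mathcal{E}_1), \Phi(\mathcal{E}_2)).
\end{equation*}
The decisive input is the hypothesis~\eqref{eq:vanishing-cond}: since~$\pi$ is smooth and proper, cohomology-and-base-change applied to the fibrewise vanishings $\HH^p(P,\bigwedge^q\normal_{Z/X}|_P) = 0$ for $p+q > 0$ forces $\RRR^p\pi_*\bigwedge^q\normal_{Z/X} = 0$ whenever $p+q > 0$, while the residual $q=0$ case (together with connectedness of the smooth fibres, so that $\HH^0(P,\mathcal{O}_P) = \groundfield$) gives $\pi_*\mathcal{O}_Z \cong \mathcal{O}_F$. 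The spectral sequence therefore collapses onto its $q = 0$ row, yielding $\Ext^p_X(\Phi(\mathcal{E}_1), \Phi(\mathcal{E}_2)) \cong \Ext^p_F(\mathcal{E}_1, \mathcal{E}_2)$, and the entire chain of identifications is functorial enough to match the canonical comparison map.

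The main technical hurdle is the clean application of cohomology-and-base-change: the fibrewise vanishings must be upgraded to global vanishings of all $\RRR^p\pi_*\bigwedge^q\normal_{Z/X}$ for $p+q>0$, and the $q=0$ level must be handled separately in order to produce $\mathcal{O}_F$ rather than just some abstract pushforward. Once these vanishings are secured, the degeneration of the spectral sequence and the identification of the resulting isomorphism with the natural map are routine bookkeeping.
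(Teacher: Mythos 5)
Your route is genuinely different from the paper's: the paper verifies the Bondal--Orlov criterion (\cref{proposition:bondal-orlov-criterion}) on skyscraper sheaves, computing $\Hom_X(j_*(\mathcal{L}|_P),j_*(\mathcal{L}|_P)[m])$ via $j^!\circ j_*$ and the normal bundle sequence $0\to\mathcal{O}_P^{\oplus\dim F}\to\normal_{P/X}\to\mathcal{N}'\to 0$, whereas you compute $\Ext^\bullet_X(\Phi(\mathcal{E}_1),\Phi(\mathcal{E}_2))$ globally. Your computational core is sound: the Koszul formula for $\sheafExt^q_X(i_*A,i_*B)$, the cancellation of $\mathcal{L}$, and the upgrade of the fibrewise vanishing to $\RRR^p\pi_*\bigwedge^q\normal_{Z/X}=0$ for $p+q>0$ by cohomology and base change are all correct (note that for $q\geq 1$ the hypothesis covers \emph{all} $p\geq 0$, which is exactly what makes the entire $q\geq 1$ part of the spectral sequence vanish rather than just its higher rows); and both your proof and the paper's implicitly use connectedness of the fibres to get $\HH^0(P,\mathcal{O}_P)=\groundfield$.

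The gap is the final claim that the chain of identifications ``is functorial enough to match the canonical comparison map.'' What you have actually proved is an abstract isomorphism $\Ext^m_X(\Phi(\mathcal{E}_1),\Phi(\mathcal{E}_2))\cong\Ext^m_F(\mathcal{E}_1,\mathcal{E}_2)$, i.e.\ an equality of dimensions; full faithfulness requires the \emph{canonical} map to be an isomorphism, and identifying your composite of spectral-sequence edge maps with that canonical map is the crux, not bookkeeping. It is also not optional for your first step: the d\'evissage from locally free sheaves to arbitrary bounded complexes only works for a natural transformation, whose behaviour on cones the five lemma controls, so without the naturality the reduction collapses too. The cleanest repair using exactly your vanishings is to switch to the unit of the adjunction: with $\Phi^{R}\coloneqq\RRR\pi_*(i^!(-)\otimes\mathcal{L}^{\vee})$, the object $i^!\circ i_*(\pi^*\mathcal{E}\otimes\mathcal{L})$ has cohomology sheaves $\pi^*\mathcal{E}\otimes\mathcal{L}\otimes\bigwedge^s\normal_{Z/X}$ for $s=0,\dots,c$; applying $\RRR\pi_*$ and the projection formula, your vanishings kill every graded piece with $s\geq 1$, the $s=0$ piece returns $\mathcal{E}$, and the unit $\mathcal{E}\to\Phi^{R}\Phi(\mathcal{E})$ is the identity onto that piece, so it is an isomorphism and $\Phi$ is fully faithful. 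This is also why the paper reaches for the Bondal--Orlov criterion: it only demands vanishing and one-dimensionality of Hom-spaces between images of skyscrapers, so no natural map ever has to be identified.
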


This is an application of the Bondal--Orlov criterion \cite[theorem~1.1]{alg-geom/9506012}, which we will quickly recall.

\begin{proposition}[(Bondal--Orlov criterion)]
  \label{proposition:bondal-orlov-criterion}
  Let~$X$ and~$Y$ be smooth projective varieties, and~$\mathcal{E}$ an object in~$\derived^\bounded(X \times Y)$. The corresponding Fourier--Mukai functor~$\Phi_{\mathcal{E}}\colon\derived^\bounded(X) \to \derived^\bounded(Y)$ is fully faithful if and only if for all closed points~$x,x'\in X$ we have that
  \begin{enumerate}[label=(\roman*)]
    \item\label{enumerate:bondal-orlov-1} $\Hom_Y(\Phi_{\mathcal{E}}(\mathcal{O}_x),\Phi_{\mathcal{E}}(\mathcal{O}_x))\cong\groundfield$;
    \item\label{enumerate:bondal-orlov-2} $\Hom_Y(\Phi_{\mathcal{E}}(\mathcal{O}_x),\Phi_{\mathcal{E}}(\mathcal{O}_x)[m])\cong0$ for all~$m\notin[0,\dim X]$;
    \item\label{enumerate:bondal-orlov-3} $\Hom_Y(\Phi_{\mathcal{E}}(\mathcal{O}_x),\Phi_{\mathcal{E}}(\mathcal{O}_{x'})[m])\cong0$ for all~$m\in\mathbb{Z}$ and~$x\neq x'$;
  \end{enumerate}
  where~$\mathcal{O}_x$ denotes the skyscraper sheaf at~$x$.
\end{proposition}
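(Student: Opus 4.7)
The plan is to verify the three conditions of the Bondal--Orlov criterion (\cref{proposition:bondal-orlov-criterion}) on the skyscraper sheaves $\mathcal{O}_f$ at closed points $f \in F$. Since $\pi$ is smooth (hence flat), $\pi^{*}\mathcal{O}_f \cong \mathcal{O}_{P_f}$, where $P_f\coloneqq\pi^{-1}(f)$ is the fibre; writing $j_f\colon P_f \hookrightarrow Z$ for the fibre inclusion and $k_f\coloneqq i\circ j_f\colon P_f \hookrightarrow X$ for its composition with $i$, we obtain $\Phi(\mathcal{O}_f) = k_{f,*}(\mathcal{L}|_{P_f})$. Condition~\ref{enumerate:bondal-orlov-3} is then immediate: for $f\neq f'$ the fibres $P_f$ and $P_{f'}$ are disjoint in $Z$ and hence in $X$, so all $\Ext^{m}_X$ between the corresponding pushforwards vanish by disjointness of supports.

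For conditions~\ref{enumerate:bondal-orlov-1} and~\ref{enumerate:bondal-orlov-2} we exploit that $k_f$ is a regular embedding. The standard Koszul identification $\mathcal{H}^{-q}(\LLL k_f^{*}k_{f,*}\mathcal{F}) \cong \mathcal{F}\otimes\bigwedge^{q}\normal_{P_f/X}^{\vee}$, combined with the line-bundle cancellation $\mathcal{L}|_{P_f}^{\vee}\otimes\mathcal{L}|_{P_f} \cong \mathcal{O}_{P_f}$, yields a convergent spectral sequence
\begin{equation*}
    E_{2}^{p,q} = \HH^{p}\bigl(P_f,\textstyle\bigwedge^{q}\normal_{P_f/X}\bigr) \;\Longrightarrow\; \Ext^{p+q}_{X}\bigl(\Phi(\mathcal{O}_f),\Phi(\mathcal{O}_f)\bigr).
\end{equation*}
The aim is to show that this $E_{2}$ page is concentrated in the row $p=0$ and the column range $0\leq q\leq \dim F$, so that the spectral sequence collapses and the required vanishing together with the identification of $\Hom$ follows at once.

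For the cohomological analysis we use the short exact sequence of normal bundles
\begin{equation*}
    0\to\normal_{P_f/Z}\to\normal_{P_f/X}\to\mathcal{N}'\to 0,
\end{equation*}
in which $\normal_{P_f/Z}\cong\tangent_{f}F\otimes\mathcal{O}_{P_f}$ is trivial of rank $\dim F$ because $\pi$ is smooth. The induced filtration of $\bigwedge^{q}\normal_{P_f/X}$ has graded pieces $\mathcal{O}_{P_f}^{\binom{\dim F}{s}}\otimes\bigwedge^{q-s}\mathcal{N}'$, and the hypothesis \eqref{eq:vanishing-cond} forces the cohomology of each such piece to vanish outside bidegree $(p,q-s)=(0,0)$. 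A straightforward induction through the filtration then gives $\HH^{p}(P_f,\bigwedge^{q}\normal_{P_f/X}) = 0$ unless $p=0$ and $q\in[0,\dim F]$, with dimension $\binom{\dim F}{q}\cdot\dim_{\groundfield}\HH^{0}(P_f,\mathcal{O}_{P_f})$ in the surviving range; for connected fibres this specializes to $\binom{\dim F}{q}$, and in particular equals $\groundfield$ when $q=0$, which is exactly what \ref{enumerate:bondal-orlov-1} and \ref{enumerate:bondal-orlov-2} demand.

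The hard part will be the bookkeeping across the two layers of structure --- the Koszul identification for $k_f$ and the normal-bundle filtration of $\bigwedge^{q}\normal_{P_f/X}$ --- and cleanly verifying that the vanishing assumption \eqref{eq:vanishing-cond} cuts the non-zero contributions to exactly the range $q\in[0,\dim F]$ needed for the spectral sequence to degenerate into the correct self-Ext computation.
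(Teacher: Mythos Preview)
Your proof does not address the displayed statement. \Cref{proposition:bondal-orlov-criterion} is the Bondal--Orlov criterion itself, which the paper merely cites from \cite{alg-geom/9506012} without proof; what you have written is an \emph{application} of that criterion to establish \cref{prop:FF}.

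Read as a proof of \cref{prop:FF}, your argument is correct and follows the same route as the paper's: identify $\Phi(\mathcal{O}_f)$ with a pushforward from the fibre, dispatch condition~\ref{enumerate:bondal-orlov-3} by disjointness of supports, and for \ref{enumerate:bondal-orlov-1}--\ref{enumerate:bondal-orlov-2} set up the spectral sequence $E_2^{p,q}=\HH^p(P_f,\bigwedge^q\normal_{P_f/X})$ via the Koszul description of the self-intersection, then control it using the normal-bundle sequence $0\to\mathcal{O}_{P_f}^{\dim F}\to\normal_{P_f/X}\to\mathcal{N}'\to 0$ together with the hypothesis \eqref{eq:vanishing-cond}. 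The only cosmetic differences are that the paper reaches the spectral sequence through $j^{!}j_{*}$ and Grothendieck duality rather than $\LLL j^{*}j_{*}$ directly, and that it records only the vanishing $E_2^{p,q}=0$ for $p+q>\dim F$ rather than your sharper observation that $E_2$ is concentrated in the row $p=0$; either suffices. Your remark that condition~\ref{enumerate:bondal-orlov-1} requires $\HH^0(P_f,\mathcal{O}_{P_f})\cong\groundfield$, i.e.\ connected fibres, is a point the paper leaves implicit.
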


\begin{proof}[of \cref{prop:FF}]
  For a (closed) point~$x$ in~$F$, denote by~$P\coloneqq\pi^{-1}(x)$ the fiber. We denote~$j\colon P\hookrightarrow X$ for the closed immersion obtained by composition with~$i$. Then, $\Phi(\mathcal{O}_{x})=i_*(\mathcal{O}_P\otimes\mathcal{L})=j_*(\mathcal{L}|_P)$.

  Let us first check \ref{enumerate:bondal-orlov-3} of \cref{proposition:bondal-orlov-criterion}. Let~$P'$ denote the fiber~$\pi^{-1}(x')$ for~$x\neq x'$ in~$F$. Since~$P\cap P'=\emptyset$, the objects~$j_*(\mathcal{L}|_{P})$ and~$j_*(\mathcal{L}|_{P'})$ have disjoint support, hence they are completely orthogonal.

  To check~\ref{enumerate:bondal-orlov-1} and~\ref{enumerate:bondal-orlov-2}, we have by adjunction that
  \begin{equation}
    \label{equation:adjunction}
    \Hom_X(j_*(\mathcal{L}|_P),j_*(\mathcal{L}|_P)[m])
    \cong
    \Hom_P(\mathcal{L}|_P,j^!\circ j_*(\mathcal{L}|_P)[m]).
  \end{equation}
  By \cite[corollary~11.2 and proposition~11.8]{MR2244106}, the cohomology sheaves of the complex~$\LLL j^*\circ j_*(\mathcal{L}|_P)$ are given by
  \begin{equation}
    \mathcal{H}^{-s}(\LLL j^*\circ j_*(\mathcal{L}|_{P}))\cong \mathcal{L}|_P\otimes\bigwedge^s\mathcal{N}^\vee \text{ for } s=0, \dots, c+\dim F,
  \end{equation}
  where~$\mathcal{N}\coloneqq\normal_{P/X}$ is the normal bundle of~$P$ in~$X$. As~$j^!$ and~$\LLL j^*$ are related by Grothendieck duality, we have that
  \begin{equation}
    j^!\circ j_*(\mathcal{L}|_{P})=\LLL j^*\circ j_*(\mathcal{L}|_{P})\otimes\omega_j[\dim P-\dim X]=\LLL j^*\circ j_*(\mathcal{L}|_{P})\otimes \det(\mathcal{N})[-\dim F-c]
  \end{equation}
  and this object has cohomology sheaves
  \begin{equation}
    \mathcal{H}^{s}(j^!\circ j_*(\mathcal{L}|_{P}))\cong \mathcal{L}|_P\otimes\bigwedge^s\mathcal{N} \text{ for } s=0, \dots, c+\dim F.
  \end{equation}
  We can compute the right-hand side of \eqref{equation:adjunction} via the hypercohomology spectral sequence and we obtain the spectral sequence
  \begin{equation}
    \mathrm{E}_2^{p,q}
    =\HH^p(P,\mathcal{H}^{q}(j^{!}\circ j_*(\mathcal{L}|_P)\otimes\mathcal{L}^{-1}|_{P}))
    =\HH^p(P,\bigwedge^q\mathcal{N})
    \Rightarrow
    \Hom_X(j_*(\mathcal{L}|_{P}),j_*(\mathcal{L}|_{P})[p+q]).
  \end{equation}
  Hence~\ref{enumerate:bondal-orlov-1}, and~\ref{enumerate:bondal-orlov-2} for~$m<0$ follow immediately.

  To check~\ref{enumerate:bondal-orlov-2} for~$m>\dim F$, consider the following identification of the short exact sequence of normal bundles
  \begin{equation}
    \label{eq:nbseq}
    \begin{tikzcd}
      0 \arrow[r] & \normal_{P/Z} \arrow[r] \arrow[d, equals] & \normal_{P/X} \arrow[r] \arrow[d, equals] & \normal_{Z/X}|_P \arrow[r] \arrow[d, equals] & 0 \\
      0 \arrow[r] & \mathcal{O}_P^{\oplus\dim F} \arrow[r] & \mathcal{N} \arrow[r] & \mathcal{N}' \arrow[r] & 0
    \end{tikzcd}
  \end{equation}
  as the first term is isomorphic to~$\mathcal{O}_P\otimes_\groundfield\tangent_xF$. Therefore, for any positive integer~$q$, the bundle~$\bigwedge^q\mathcal{N}$ is a successive extension of direct sums of exterior powers of~$\mathcal{N}'$. From the vanishing hypothesis, we see that~$\HH^p(P,\bigwedge^q\mathcal{N})=0$ for all~$p+q>\dim F$. As a result,~$\mathrm{E}_\infty^{p, q}=0$ for all~$p+q>\dim F$, and we are done.
\end{proof}

\subsection{Some applications}
We will briefly discuss a few situations where this criterion applies, before proceeding with the setting of standard flips in \cref{section:standard-flip}.

\paragraph{Orlov's blowup formula}\quad
The easiest setting where one can apply \cref{prop:FF} is that of the usual blowup of a smooth projective variety~$X$ in a smooth center~$Z$ of codimension~$c\geq 2$, i.e.~we consider the diagram
\begin{equation}
  \label{eq:blowup}
  \begin{tikzcd}
    E \ar[hook]{r}{i} \ar{d}{\pi} & \Bl_Z X \\
    Z &
  \end{tikzcd}
\end{equation}
where~$i$ denotes the inclusion of the exceptional divisor~$E$. Then the vanishing condition \eqref{eq:vanishing-cond} is clearly satisfied as we have the identification~$\mathcal{N}'\cong\mathcal{O}_P(-1)$ using the notation of \cref{prop:FF}, and we see that the functor
\begin{equation}
  \Phi_m\colon\derived^\bounded(Z)\to\derived^\bounded(\Bl_Z X):\mathcal{E}\mapsto i_{*}(\pi^{*}(\mathcal{E})\otimes \mathcal{O}_E(mE))
\end{equation}
is fully faithful for every~$m\in\mathbb{Z}$, recovering \cite[assertion~4.2(i)]{MR1208153}, which is one of the ingredients for Orlov's blowup formula.

\paragraph{Krug--Ploog--Sosna's cyclic quotient singularities}\quad
More interestingly, \cref{prop:FF} also applies to the ``singular'' blowups of \cite{MR3811590}, for cyclic quotient singularities. The setting, using the notation of op.~cit., is as follows. Let~$Y$ denote a smooth quasiprojective variety with an action of a cyclic group~$G \cong \mu_m$. Then the fixed point locus~$S \subset Y$ (of codimension~$n$) is smooth and the quotient~$Y/G$ has rational singularities. We further assume that only the isotropy groups~$1$ and~$G$ occur, and that a generator of~$G$ acts on the normal bundle~$\normal_{S/Y}$ by multiplication with a fixed primitive~$m$th root of unity.

In this case there is a diagram
\begin{equation}
  \begin{tikzcd}
    Z\coloneqq\PP(\normal_{S/Y}) \ar[hook]{r}{i} \ar{d}{\pi} & X\coloneqq\Bl_S(Y/G)\cong(\Bl_SY)/G \\
    S
  \end{tikzcd}
\end{equation}
of smooth projective varieties. Here~$Y/G$ has cyclic quotient singularities, and the ``singular blowup''~$\Bl_S(Y/G)$ is a resolution of singularities.

By \cite[lemma~4.10(ii) and (vi)]{MR3811590}, we have that
\begin{equation}
  \normal_{Z/X}|_{P} \cong \mathcal{O}_{\PP^{n-1}}(-m)
\end{equation}
for every fiber~$P$ of~$\pi$, so \cref{prop:FF} applies as soon as~$n>m$ and one obtains fully faithful functors
\begin{equation}
  \Theta_\beta\colon\derived^\bounded(S) \to \derived^\bounded(\Bl_S(Y/G)): \mathcal{E} \mapsto i_{*}(\pi^{*}(\mathcal{E})\otimes \mathcal{O}_{\pi}(\beta)),
\end{equation}
for~$\beta \in \mathbb{Z}$, as in \cite[theorem~4.1(ii)]{MR3811590}, which is one of the ingredients for the Krug--Ploog--Sosna semiorthogonal decomposition.

A particularly relevant example for this paper is given by taking~$Y\coloneqq S\times S$ with~$S$ a smooth projective variety, and~$G=\mu_2$ acting via transposition; in this case~$X \cong \hilbtwo[S]$. The following result is a specific case of \cite[theorem~4.1(ii)]{MR3811590} with $m=2$.
\begin{proposition}[(Krug--Ploog--Sosna)]
  Let~$S$ be a smooth projective variety of dimension~$n\geq 2$. Then there exists a semiorthogonal decomposition
  \begin{equation}
    \derived^\bounded(\hilbtwo[S])
    =
    \langle
    \derived^\bounded_{\mu_2}(S^2),
    \underbrace{
      \derived^\bounded(S),
      \ldots,
      \derived^\bounded(S)
    }_{\text{$n-2$ copies}}
    \rangle.
  \end{equation}
\end{proposition}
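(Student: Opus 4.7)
The plan is to specialise the singular-blowup construction of the preceding paragraph to $Y = S \times S$ with $G = \mu_2$ acting by transposition. The fixed locus is the diagonal $\Delta \cong S$, of codimension $n$, the quotient is $\Sym^2 S$, and the singular blowup $\Bl_\Delta(\Sym^2 S)$ is exactly $\hilbtwo[S]$. The diagram \eqref{diag:EZ} becomes $\pi\colon Z = \PP(\tangent_S) \to S$ embedded as the exceptional divisor of $\hilbtwo[S]$, and since the $\mu_2$-action on $\tangent_S$ is by $-1$, one has $m = 2$.

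First, I would check the hypothesis of \cref{prop:FF}: since $\mathcal{N}'$ is a line bundle, $\bigwedge^q \mathcal{N}'$ vanishes for $q \geq 2$, and for $q = 1$ one has $\mathcal{N}' \cong \mathcal{O}_{\PP^{n-1}}(-2)$ on each fiber; a routine Serre duality computation shows $\HH^p(\PP^{n-1}, \mathcal{O}(-2)) = 0$ for every $p$ precisely when $n \geq 3$. Thus each functor $\Theta_\beta\colon \derived^\bounded(S) \to \derived^\bounded(\hilbtwo[S])$ is fully faithful when $n \geq 3$. The case $n = 2$ is degenerate and collapses to Haiman's derived McKay equivalence $\derived^\bounded(\hilbtwo[S]) \simeq \derived^\bounded_{\mu_2}(S \times S)$, with no $\Theta_\beta$-factors present.

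Next, I would construct the equivariant embedding $\derived^\bounded_{\mu_2}(S \times S) \hookrightarrow \derived^\bounded(\hilbtwo[S])$ by a Bridgeland--King--Reid/derived McKay-type argument: the blowup $\Bl_\Delta(S \times S)$ carries a lifted $\mu_2$-action whose quotient is $\hilbtwo[S]$, and composing $\mu_2$-equivariant pullback with the usual blowup functor and descent yields the desired fully faithful embedding.

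The remaining work is to assemble these ingredients into a semiorthogonal decomposition of the correct length. Semiorthogonality of $\Theta_\beta$ against $\Theta_{\beta'}$ reduces by adjunction to the vanishing of $\RRR\pi_* \mathcal{O}_\pi(\beta - \beta')$ on $\PP(\tangent_S)$, which holds in a length-$(n-2)$ range; orthogonality against the equivariant image is verified on the exceptional divisor by tracking $\mu_2$-weights. I expect the main obstacle to be generation, which I would treat by combining the equivariant Orlov blowup formula for $\Bl_\Delta(S \times S)$ with descent through the $\mu_2$-quotient, following the argument of \cite[\S4]{MR3811590}, and matching the resulting pieces with the candidate semiorthogonal components.
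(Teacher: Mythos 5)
The paper does not actually prove this proposition: it is quoted from Krug--Ploog--Sosna \cite{MR3811590}, and the only ingredient the paper rederives is the fully faithfulness of the functors $\Theta_\beta$, obtained from \cref{prop:FF} exactly as in your second paragraph. Your verification there is correct and matches the text: $\mathcal{N}'$ restricts to $\mathcal{O}_{\PP^{n-1}}(-2)$ on each fiber, all of its cohomology vanishes precisely when $n\geq 3$, and this is the condition $n>m=2$ from the preceding paragraph of the paper. Your identification of the geometric setup ($Y=S\times S$, fixed locus the diagonal, quotient $\Sym^2S$, singular blowup $\hilbtwo[S]$, the $\mu_2$\dash action on $\normal_{\Delta/S\times S}\cong\tangent_S$ by $-1$) is also correct.

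The gap is in your third paragraph. For $n\geq 3$ the Hilbert--Chow resolution $\hilbtwo[S]\to\Sym^2S$ is \emph{not} crepant (as the paper itself notes in \cref{section:hilbert-squares}), so the Bridgeland--King--Reid/Haiman mechanism does not apply; moreover BKR would produce an \emph{equivalence}, which is exactly what fails here once the $n-2$ extra components are present. The fully faithfulness of the relevant functor (pull back to $\Bl_\Delta(S^2)$, push to the quotient, take $\mu_2$\dash invariants) is the main technical theorem of \cite{MR3811590} and is proved there by a direct analysis of the Fourier--Mukai kernel, not by descent from a McKay equivalence; likewise the semiorthogonality of the equivariant piece against the $\Theta_\beta$'s and the generation statement occupy most of \cite[\S 4]{MR3811590}. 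If your intention is to defer those steps to that reference, your outline is acceptable and amounts to what the paper does by citing the result; but as a self\dash contained argument the ``BKR\dash type'' step would fail as stated, and it is the hardest part of the proof.
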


\paragraph{Cayley's trick}\quad
A third application is Cayley's trick, which gives a relationship between a complete intersection and a canonically associated hypersurface in a projective bundle. This relationship can be studied at different levels, and the name ``Cayley's trick'' originates from \cite{MR3299729} where it was used to study cohomology. In \cite{MR3628227} Cayley's trick was used on the level of derived categories to study Fano visitors as in \cref{subsection:fano-visitor}, and we will now explain how to obtain part of their semiorthogonal decomposition.

Let~$\mathcal{E}$ denote a vector bundle of rank~$r$ on a smooth variety~$Y$, and~$s \in \HH^0(Y,\mathcal{E})$ a regular section with smooth projective zero locus~$\ZZ(s)$. By the isomorphism
\begin{equation}
  \HH^0(Y,\mathcal{E}) \cong \HH^0(\PP(\mathcal{E}^{\vee}),\mathcal{O}_{\PP(\mathcal{E}^{\vee})}(1)),
\end{equation}
the section~$s$ also defines a section~$f_s \in \HH^0(\PP(\mathcal{E}^{\vee}),\mathcal{O}_{\PP(\mathcal{E}^{\vee})}(1))$ and we denote~$\ZZ(f_s)$ its zero locus, which defines a hypersurface in~$\PP(\mathcal{E}^{\vee})$. In particular, there is a cartesian diagram
\begin{equation}
  \begin{tikzcd}
    Z\coloneqq\PP(\normal_{\ZZ(s)/Y}) \arrow[r, hook, "i"] \arrow[d, "\pi"] \arrow[dr, phantom, "\square"] & X\coloneqq\ZZ(f_s) \ar{d} \\
    \ZZ(s) \ar[hook]{r} & Y
  \end{tikzcd}
\end{equation}
where~$X\to Y$ is a~$\PP^{r-2}$-bundle over~$Y\backslash\ZZ(s)$ and the restriction~$\pi$ is a~$\PP^{r-1}$-bundle.
In this case, the normal bundle is
\begin{equation}
  \normal_{Z/X}|_{P} \cong \Omega_{\PP^{r-1}}(1),
\end{equation}
for a fiber~$P \cong \PP^{r-1}$ of~$\pi$, and \cref{prop:FF} applies by Bott vanishing. This yields fully faithful functors
\begin{equation}
  \Phi\colon\derived^\bounded(\ZZ(s))\to\derived^\bounded(\ZZ(f_s)):\mathcal{E}\mapsto i_{*}(\pi^{*}(\mathcal{E})\otimes \mathcal{L}),
\end{equation}
for every line bundle~$\mathcal{L}$ on~$\PP(\normal_{\ZZ(s)/Y})$, recovering part of \cite[proposition 2.10]{MR2437083}.

\section{A semiorthogonal decomposition for standard flips}
\label{section:standard-flip}
In this section we prove \cref{theorem:flip}. 

The proof of part~\ref{enumerate:part-2} of \cref{theorem:flip} is more interesting, and is very much inspired by Kuznetsov's homological projective duality \cite{MR2354207} and its interpretation by Thomas \cite{MR3821163} via the ``chess game'' from \cite{1704.01050v1} (but the setting in op.~cit.~is different from what we consider here\footnote{The authors of \cite{1704.01050v1} have communicated to us that they were also aware of a proof of \cref{theorem:flip} using chess game methods similar to ours.}.)

In the case~$\ell=1$, a slightly different (and shorter) proof of a stronger version of \cref{theorem:flip}\ref{enumerate:part-2} is given in appendix~\ref{app:l1}, using only usual mutation techniques (and not ``chess game methods'' to streamline the more complicated mutations required for the result).

The semiorthogonal decomposition \eqref{eqn:goal} will be established in several steps:
\begin{enumerate}
  \item The fully faithfulness of the functor~$\Phi_{m}$ on~$\derived^\bounded(F)$ follows from \cref{prop:FF}.
  \item The fully faithfulness of the functor~$\RRR\tau_{*}\circ\LLL\tau'^{*}$ on~$\derived^\bounded(X')$ is Bondal--Orlov's theorem, see \cref{thm:BO} and also \cref{rmk:ProofOfBO}.
  \item The semiorthogonality of the subcategories in \eqref{eqn:goal} is the combination of \cref{SO-FF} and \cref{SO-X'F}.
  \item The fullness of the semiorthogonal decomposition \eqref{eqn:goal} is \cref{cor:generation}.
\end{enumerate}

Let us first introduce some notation. Assume we are given a standard flip diagram~\eqref{equation:flip}. For integers~$a,b\in\mathbb{Z}$, denote by
\begin{equation}
  \mathcal{O}(a, b)\coloneqq p^{*}\mathcal{O}_{\pi}(a)\otimes p^{\prime*}\mathcal{O}_{\pi'}(b),
\end{equation}
which is a line bundle on~$E$. We then define the following triangulated subcategory of~$\derived^\bounded(\tilde X)$:
\begin{equation}
  \label{equation:subcategory}
  \mathcal{A}(a, b)\coloneqq j_{*}(p^{*}\circ \pi^{*}\derived^\bounded(F)\otimes \mathcal{O}(a, b)).
\end{equation}
This is a triangulated subcategory because for all~$a,b\in\mathbb{Z}$
the functor is fully faithful by the projective bundle formula and the blowup formula,
and the essential image of a full functor is naturally a triangulated subcategory.

We note first that the components
\begin{equation}
  \Phi_m(\derived^\bounded(F))\coloneqq i_{*}(\pi^{*}(\derived^\bounded(F))\otimes \mathcal{O}_{\pi}(m)),
\end{equation}
for~$m\in\mathbb{Z}$ appearing in the decomposition~\eqref{eqn:goal} can be expressed in terms of the categories just defined, by the following lemma.

\begin{lemma}
  \label{lemma:PhiA}
  For every~$m\in\mathbb{Z}$ we have the identification of subcategories
  \begin{equation}
    \Phi_{m}(\derived^\bounded(F))=\RRR\tau_{*}\mathcal{A}(m, 0)
  \end{equation}
  in~$\derived^\bounded(X)$.
\end{lemma}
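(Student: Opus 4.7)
The plan is to simply compute: starting from an arbitrary object $j_{*}(p^{*}\pi^{*}\mathcal{E}\otimes\mathcal{O}(m,0))$ of $\mathcal{A}(m,0)$, push it forward by $\RRR\tau_{*}$ and match it with $\Phi_{m}(\mathcal{E})$ on the nose. Because both sides are defined as the essential image of a functor on $\derived^\bounded(F)$, proving the identity $\RRR\tau_{*}j_{*}\bigl(p^{*}\pi^{*}\mathcal{E}\otimes\mathcal{O}(m,0)\bigr)\cong i_{*}(\pi^{*}\mathcal{E}\otimes\mathcal{O}_{\pi}(m))$ for every $\mathcal{E}$ already gives equality of subcategories.

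First I would unwind the notation: by definition $\mathcal{O}(m,0)=p^{*}\mathcal{O}_{\pi}(m)$, so
\begin{equation}
  p^{*}\pi^{*}\mathcal{E}\otimes \mathcal{O}(m,0)\;\cong\;p^{*}\bigl(\pi^{*}\mathcal{E}\otimes \mathcal{O}_{\pi}(m)\bigr).
\end{equation}
Then I would use the commutativity $\tau\circ j=i\circ p$ (visible in diagram \eqref{equation:flip}); since $j$ and $i$ are closed immersions and $\tau$, $p$ are proper, base-free composition of derived pushforwards yields
\begin{equation}
  \RRR\tau_{*}\circ j_{*}\;=\;i_{*}\circ\RRR p_{*}.
\end{equation}

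Next I would apply the projection formula along the $\PP^{\ell}$-bundle $p\colon E\to Z$ to obtain
\begin{equation}
  \RRR p_{*}\bigl(p^{*}(\pi^{*}\mathcal{E}\otimes\mathcal{O}_{\pi}(m))\bigr)\;\cong\;(\pi^{*}\mathcal{E}\otimes\mathcal{O}_{\pi}(m))\otimes\RRR p_{*}\mathcal{O}_{E}.
\end{equation}
The standard cohomology computation for a projective bundle gives $\RRR p_{*}\mathcal{O}_{E}\cong\mathcal{O}_{Z}$, so the right-hand side collapses to $\pi^{*}\mathcal{E}\otimes\mathcal{O}_{\pi}(m)$. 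Pushing forward by $i_{*}$ then recovers exactly $\Phi_{m}(\mathcal{E})$, as desired.

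There is no real obstacle here — the proof is a projection-formula bookkeeping exercise whose only nontrivial input is $\RRR p_{*}\mathcal{O}_{E}\cong\mathcal{O}_{Z}$. The mild subtlety to keep in mind is that one should check the equivalence of the two subcategories in both directions; this is automatic since the displayed chain of isomorphisms shows that the functors $\Phi_{m}(-)$ and $\RRR\tau_{*}j_{*}(p^{*}\pi^{*}(-)\otimes\mathcal{O}(m,0))$ from $\derived^\bounded(F)$ to $\derived^\bounded(X)$ are naturally isomorphic, hence have the same essential image inside $\derived^\bounded(X)$.
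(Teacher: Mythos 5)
Your proposal is correct and follows essentially the same route as the paper: the authors likewise rewrite $\mathcal{O}(m,0)=p^*\mathcal{O}_\pi(m)$, use $\RRR\tau_*\circ j_*=i_*\circ\RRR p_*$ from the commutative square $\tau\circ j=i\circ p$, and cancel $\RRR p_*\circ p^*=\identity$ via the projection formula for the projective bundle $p$. Your closing remark that a natural isomorphism of functors gives equality of essential images is the (implicit) justification in the paper as well, so nothing is missing.
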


\begin{proof}
	Notation is as in diagram~\eqref{equation:flip}.
  This follows from the computation
  \begin{equation}
    \begin{aligned}
      \RRR\tau_{*}\mathcal{A}(m, 0)
      &=\RRR\tau_{*}\circ j_{*}(p^{*}\circ\pi^{*}\derived^\bounded(F)\otimes p^{*}\mathcal{O}_{\pi}(m))\\
      &=i_{*}\circ\RRR p_{*}\circ p^{*}(\pi^{*}\derived^\bounded(F)\otimes \mathcal{O}_{\pi}(m))\\
      &=i_{*}(\pi^{*}\derived^\bounded(F)\otimes \mathcal{O}_{\pi}(m))\\
      &=\Phi_{m}(\derived^\bounded(F)).
    \end{aligned}
  \end{equation}
\end{proof}

For convenience, we also define the following subcategories of~$\derived^\bounded(\tilde X)$, which are equivalent to~$\derived^\bounded(Z')$ and~$\derived^\bounded(Z)$ respectively by Orlov's blowup formula.
\begin{equation}
  \begin{aligned}
    \mathcal{A}(a,\star)
    &\coloneqq j_{*}(p^{*}\mathcal{O}_{\pi}(a)\otimes p^{\prime*}\derived^\bounded(Z')), \\
    \mathcal{A}(\star,b)
    &\coloneqq j_{*}(p^{*}\derived^\bounded(Z)\otimes p^{\prime*}\mathcal{O}_{\pi'}(b)).
  \end{aligned}
\end{equation}

Since $Z$ and $Z'$ are projective bundles over $F$, by Orlov's projective bundle formula \cite[theorem~2.6]{MR1208153}, there are the following semiorthogonal decompositions for every~$m\in\mathbb{Z}$:
\begin{equation}
  \label{eqn:Astar1}
  \mathcal{A}(a,\star)=\langle\mathcal{A}(a, m-l), \mathcal{A}(a, m-l+1), \dots, \mathcal{A}(a, m)\rangle.
\end{equation}
Similarly we have the decompositions
\begin{equation}
  \label{eqn:Astar2}
  \mathcal{A}(\star,b)=\langle\mathcal{A}(m-k, b), \mathcal{A}(m-k+1, b), \dots, \mathcal{A}(m, b)\rangle.
\end{equation}

Applying Orlov's blowup formula \cite[theorem~4.3]{MR1208153} to the blowup~$\tau\colon\tilde X\to X$, we have the semiorthogonal decomposition:
\begin{equation}
  \label{eqn:Blup1}
  \derived^\bounded(\tilde X)=\langle \mathcal{A}(\star, -l), \mathcal{A}(\star, -l+1), \dots, \mathcal{A}(\star, -1), \LLL\tau^{*}\derived^\bounded(X)\rangle.
\end{equation}
Similarly, applying the blowup formula to $\tau'\colon\tilde X\to X$, we have
\begin{equation}\label{eqn:Blup2}
  \derived^\bounded(\tilde X)=\langle \mathcal{A}(-k, \star), \mathcal{A}(-k+1, \star), \dots, \mathcal{A}(-1, \star), \LLL\tau^{\prime*}\derived^\bounded(X')\rangle.
\end{equation}
Inserting appropriately chosen \eqref{eqn:Astar1} and \eqref{eqn:Astar2} into \eqref{eqn:Blup1} and \eqref{eqn:Blup2} we get different semiorthogonal decompositions of $\derived^\bounded(\tilde X)$. The main theme of the proof is to compare them. The following vanishing result plays a central role in the argument, and is similar to \cite[lemma~4.3]{MR3821163}, but we are not in the setting of homological projective duality.

\begin{lemma}
  \label{vanishing}
  Let~$a_{1}, a_{2}, b_{1}, b_{2}$ be integers. Then~$\RHom_{\tilde X}(\mathcal{A}(a_{1}, b_{1}), \mathcal{A}(a_{2}, b_{2}))=0$ in any of the following cases:
  \begin{enumerate}
    \item $1\leq a_{1}-a_{2}\leq k-1$,
    \item $1\leq b_{1}-b_{2}\leq \ell-1$,
    \item $a_{1}-a_{2}=k$ and $0\leq b_{1}-b_{2}\leq \ell-1$,
    \item $b_{1}-b_{2}=\ell$ and $0\leq a_{1}-a_{2}\leq k-1$.
  \end{enumerate}
\end{lemma}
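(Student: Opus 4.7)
The plan is to reduce the computation on $\tilde X$ to a pushforward calculation over $F$, by exploiting that $E \subset \tilde X$ is a Cartier divisor with $\normal_{E/\tilde X} = \mathcal{O}(-1,-1)$ and that the outer square in~\eqref{equation:flip} is cartesian. Write $\mathcal{F}_s \coloneqq p^*\pi^*\mathcal{E}_s \otimes \mathcal{O}(a_s, b_s)$ for $\mathcal{E}_s \in \derived^\bounded(F)$. By Grothendieck--Verdier adjunction for the closed immersion $j$,
\[
\RHom_{\tilde X}(j_*\mathcal{F}_1, j_*\mathcal{F}_2) \cong \RHom_E(\mathcal{F}_1, j^!j_*\mathcal{F}_2).
\]
Applying exactly the same computation of $j^!j_*$ as in the proof of \cref{prop:FF}, but now specialized to codimension $c=1$, the object $j^!j_*\mathcal{F}_2$ has only two nonzero cohomology sheaves: $\mathcal{F}_2$ in degree $0$ and $\mathcal{F}_2\otimes\mathcal{O}(-1,-1)$ in degree $1$. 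Passing to the long exact sequence of $\Hom$s attached to the canonical truncation triangle, it suffices to show
\[
\RHom_E\bigl(\mathcal{F}_1,\ \mathcal{F}_2\otimes\mathcal{O}(\varepsilon,\varepsilon)\bigr) = 0 \qquad \text{for } \varepsilon \in \{0,-1\}.
\]

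These $\RHom$s on $E$ I would compute by pushing forward along the structure morphism $\rho\colon E\to F$, which factors as both $\pi\circ p$ and $\pi'\circ p'$. Using the projection formula together with flat base change through the cartesian square, one obtains
\[
\RR\rho_*\mathcal{O}(a,b) \cong \RR\pi_*\mathcal{O}_\pi(a) \otimes_{\mathcal{O}_F} \RR\pi'_*\mathcal{O}_{\pi'}(b),
\]
and after absorbing the factors $p^*\pi^*\mathcal{E}_s$ by the projection formula, both required vanishings reduce, with $\Delta a \coloneqq a_2-a_1$ and $\Delta b \coloneqq b_2-b_1$, to the claim
\[
\RR\pi_*\mathcal{O}_\pi(\Delta a+\varepsilon) \otimes_{\mathcal{O}_F} \RR\pi'_*\mathcal{O}_{\pi'}(\Delta b+\varepsilon) = 0 \qquad \text{for } \varepsilon\in\{0,-1\}.
\]

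I would then conclude by the standard projective-bundle vanishing $\RR\pi_*\mathcal{O}_\pi(m) = 0$ for $m\in[-k,-1]$ and $\RR\pi'_*\mathcal{O}_{\pi'}(n) = 0$ for $n\in[-\ell,-1]$. In the first two cases of the statement, i.e.~$1\leq a_1-a_2\leq k-1$ or $1\leq b_1-b_2\leq \ell-1$, both values of $\varepsilon$ keep the relevant shift inside the vanishing range for one of the two factors, so both summands vanish. The delicate cases are the last two. For $a_1-a_2=k$ and $0\leq b_1-b_2\leq\ell-1$, the $\varepsilon=0$ summand vanishes because $\Delta a=-k\in[-k,-1]$, while the $\varepsilon=-1$ summand has $\Delta a-1=-k-1$ \emph{outside} the $\RR\pi_*$-vanishing range; but the constraint $0\leq b_1-b_2\leq\ell-1$ then forces $\Delta b-1\in[-\ell,-1]$, killing this summand via $\RR\pi'_*$ instead. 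The fourth case is symmetric in the two projective bundles.

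The main obstacle is precisely the bookkeeping in these boundary cases three and four: the ranges in the statement are tuned so that whenever one of the two pieces of $j^!j_*\mathcal{F}_2$ just misses the vanishing on its own projective-bundle factor, the hypothesis on the other pair of indices forces the other factor to take over. Once this combinatorial interplay is correctly accounted for, the rest is a formal application of adjunction, projection formula, flat base change, and projective-bundle vanishing.
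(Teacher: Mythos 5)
Your proposal is correct and follows essentially the same route as the paper: the only cosmetic difference is that you use the $(j_*,j^!)$ adjunction, so the two twists $\varepsilon\in\{0,-1\}$ appear as the cohomology sheaves of $j^!j_*\mathcal{F}_2$, whereas the paper uses the $(\LLL j^*,j_*)$ adjunction and the standard two-term triangle for $\LLL j^*j_*$ on the divisor $E$ with $\mathcal{O}_E(E)=\mathcal{O}(-1,-1)$, yielding the same two twists. The subsequent pushforward to $F$, the factorization $\RRR\rho_*\mathcal{O}(a,b)\cong\RRR\pi_*\mathcal{O}_\pi(a)\otimes\RRR\pi'_*\mathcal{O}_{\pi'}(b)$, the projective-bundle vanishing, and the case-by-case bookkeeping (including the boundary cases where one factor takes over from the other) coincide with the paper's argument.
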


\begin{proof}
  For arbitrary~$\mathcal{F}_{1}, \mathcal{F}_{2}\in \derived^\bounded(F)$, by adjunction,
  \begin{equation}
    \label{eqn:HomAA}
    \begin{aligned}
      &\RHom_{\tilde X}\left( j_{*}(p^{*}\circ\pi^{*}\mathcal{F}_{1}\otimes \mathcal{O}(a_{1}, b_{1})), j_{*}(p^{*}\circ \pi^{*}\mathcal{F}_{2}\otimes \mathcal{O}(a_{2}, b_{2})) \right) \\
      &\quad=\RHom_E\left( \LLL j^{*}\circ j_{*}(p^{*}\circ\pi^{*}\mathcal{F}_{1}\otimes \mathcal{O}(a_{1}, b_{1})), p^{*}\circ \pi^{*}\mathcal{F}_{2}\otimes \mathcal{O}(a_{2}, b_{2}) \right)
    \end{aligned}
  \end{equation}
  By \cite[corollary~11.4(ii)]{MR2244106}, there is a distinguished triangle:
  %\begin{equation}
  %\label{eq:jtriangle}
  %  p^{*}\circ\pi^{*}\mathcal{F}_{1}\otimes \mathcal{O}(a_{1}, b_{1})\otimes \mathcal{O}_{E}(-E)[1]
  %  \to \LLL j^{*}\circ j_{*}(p^{*}\circ\pi^{*}\mathcal{F}_{1}\otimes \mathcal{O}(a_{1}, b_{1}))
  %  \to p^{*}\circ\pi^{*}\mathcal{F}_{1}\otimes \mathcal{O}(a_{1}, b_{1})
  %  \xrightarrow{+1}
  %\end{equation}
  \begin{equation}
  \label{eq:jtriangle}
    A\otimes \mathcal{O}_{E}(-E)[1]
    \to \LLL j^{*}\circ j_{*}A
    \to A
    \xrightarrow{+1}
  \end{equation}
  which we consider for~$A=p^*\circ\pi^*\mathcal{F}_1\otimes\mathcal{O}(a_1,b_1)$.
  Since $\mathcal{O}_{E}(E)=\mathcal{O}(-1,-1)$, this triangle reduces to
  \begin{equation}
    p^{*}\circ\pi^{*}\mathcal{F}_{1}\otimes \mathcal{O}(a_{1}+1, b_{1}+1)[1]
    \to\LLL j^{*}\circ j_{*}(p^{*}\pi^{*}\mathcal{F}_{1}\otimes \mathcal{O}(a_{1}, b_{1}))
    \to p^{*}\circ\pi^{*}\mathcal{F}_{1}\otimes \mathcal{O}(a_{1}, b_{1})
    \xrightarrow{+1}
  \end{equation}
  Applying the functor~$\RHom_E(-, p^{*}\circ\pi^{*}\mathcal{F}_{2}\otimes \mathcal{O}(a_{2}, b_{2}))$, 
  we see that (the right-hand side of) \eqref{eqn:HomAA} is the cone of the morphism
 \begin{equation}
 	\begin{aligned}
 	 \RHom_E(p^{*}\circ\pi^{*}\mathcal{F}_{1}\otimes \mathcal{O}(a_{1}+1, b_{1}+1)[2], p^{*}\circ\pi^{*}\mathcal{F}_{2}\otimes \mathcal{O}(a_{2}, b_{2}))\\
 	\xrightarrow{f}\RHom_E(p^{*}\circ\pi^{*}\mathcal{F}_{1}\otimes \mathcal{O}(a_{1}, b_{1}), p^{*}\circ\pi^{*}\mathcal{F}_{2}\otimes \mathcal{O}(a_{2}, b_{2}))
 	\end{aligned}
 \end{equation}

  or equivalently, denoting $a':=a_2-a_1$, $b':=b_2-b_1$, \eqref{eqn:HomAA} is the cone of
  \begin{equation}
    \label{eqn:HomFF}
    \RHom_E(p^{*}\circ\pi^{*}\mathcal{F}_{1}[2], p^{*}\circ\pi^{*}\mathcal{F}_{2}\otimes \mathcal{O}(a'-1, b'-1))
    \xrightarrow{f}\RHom_E(p^{*}\circ\pi^{*}\mathcal{F}_{1}, p^{*}\pi^{*}\mathcal{F}_{2}\otimes \mathcal{O}(a', b')).
  \end{equation}
  Now note that for all~$a,b\in\mathbb{Z}$,
  \begin{equation}
    \begin{aligned}
      &\RHom_E(p^{*}\circ\pi^{*}\mathcal{F}_{1}, p^{*}\circ\pi^{*}\mathcal{F}_{2}\otimes \mathcal{O}(a, b)) \\
      &\quad=\RHom_E(p^{*}\circ\pi^{*}\mathcal{F}_{1}, p^{*}\circ\pi^{*}\mathcal{F}_{2}\otimes p^{*}\mathcal{O}_{\pi}(a)\otimes p^{\prime*}\mathcal{O}_{\pi'}(b))\\
      &\quad=\RHom_Z(\pi^{*}\mathcal{F}_{1}, \pi^{*}\mathcal{F}_{2}\otimes\mathcal{O}_{\pi}(a)\otimes \RRR p_{*}\circ p^{\prime*}\mathcal{O}_{\pi'}(b))\\
      &\quad=\RHom_Z(\pi^{*}\mathcal{F}_{1}, \pi^{*}\mathcal{F}_{2}\otimes\mathcal{O}_{\pi}(a)\otimes \pi^{*}\circ\RRR\pi'_{*}\mathcal{O}_{\pi'}(b))\\
      &\quad=\RHom_F(\mathcal{F}_{1}, \mathcal{F}_{2}\otimes\RRR\pi_{*}\mathcal{O}_{\pi}(a)\otimes \RRR\pi'_{*}\mathcal{O}_{\pi'}(b)),
    \end{aligned}
  \end{equation}
  which vanishes when~$a\in [-k, -1]$ or~$b\in [-\ell, -1]$. Therefore, in each of the cases in the statement, both the source and the target of~$f$ in \eqref{eqn:HomFF} vanish, hence also the cone.
\end{proof}

\subsection{Semiorthogonality}
We will first prove that the subcategories in \eqref{eqn:goal} are semiorthogonal. As e.g.~in the proof of Orlov's blowup formula, this falls apart into two statements:
\begin{enumerate}[label=(\roman*)]
  \item\label{enumerate:semiorthogonality-1} the semiorthogonality of the subcategories~$\Phi_{-k+\ell}(\derived^\bounded(F)),\dots, \Phi_{-1}(\derived^\bounded(F))$;
  \item\label{enumerate:semiorthogonality-2} the semiorthogonality of the pair~$\Phi_{m}(\derived^\bounded(F)),\RRR\tau_*\circ\LLL\tau^{\prime*}\derived^\bounded(X')$ for~$m=-k+\ell,\ldots,-1$.
\end{enumerate}

Let us first prove~\ref{enumerate:semiorthogonality-1}.
\begin{proposition}
  \label{SO-FF}
  If~$m'< m$ are two integers such that~$m-m'<k-\ell$, then for all~$\mathcal{E}, \mathcal{F}\in \derived^\bounded(F)$, we have
  \begin{equation}
    \RHom_X(\Phi_{m}(\mathcal{E}), \Phi_{m'}(\mathcal{F}))=0.
  \end{equation}
\end{proposition}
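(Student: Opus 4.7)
The plan is to compute $\RHom_X(\Phi_m(\mathcal{E}), \Phi_{m'}(\mathcal{F}))$ directly via the $(i_*, i^!)$ adjunction, reducing to the standard vanishing of $\RRR\pi_*\mathcal{O}_\pi(n)$ for $n \in [-k,-1]$ on the $\mathbb{P}^k$-bundle $\pi \colon Z \to F$. This is essentially the argument underlying \cref{prop:FF}, but now carried out in families over all of $F$ rather than over a single fibre.

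First I would apply the adjunction
\[
\RHom_X(\Phi_m(\mathcal{E}), \Phi_{m'}(\mathcal{F})) = \RHom_Z\bigl(\pi^*\mathcal{E} \otimes \mathcal{O}_\pi(m),\; i^!i_*(\pi^*\mathcal{F} \otimes \mathcal{O}_\pi(m'))\bigr),
\]
and reuse the Koszul-type computation from the proof of \cref{prop:FF}: the cohomology sheaves of $i^!i_*(\pi^*\mathcal{F} \otimes \mathcal{O}_\pi(m'))$ are the bundles $\pi^*\mathcal{F}\otimes\mathcal{O}_\pi(m')\otimes\bigwedge^q\normal_{Z/X}$ for $q = 0,\dots, \ell+1$. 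Plugging in $\normal_{Z/X}\cong\mathcal{O}_\pi(-1)\otimes\pi^*\mathcal{V}'$ and applying the projection formula along $\pi$, the hypercohomology spectral sequence takes the form
\[
E_2^{p,q} = \Ext^p_F\bigl(\mathcal{E},\; \mathcal{F}\otimes\bigwedge^q\mathcal{V}'\otimes\RRR\pi_*\mathcal{O}_\pi(m'-m-q)\bigr).
\]

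Setting $d \coloneqq m - m' \geq 1$, the twist $m'-m-q = -d-q$ ranges over $[-d-\ell-1,\; -d]$ as $q$ runs through $[0,\ell+1]$. The upper bound $-d \leq -1$ is automatic, and the lower bound $-d-\ell-1 \geq -k$ is equivalent to $d \leq k-\ell-1$, i.e.\ $d < k - \ell$, which is exactly the hypothesis. Thus every relevant twist lies in $[-k,-1]$, every $\RRR\pi_*\mathcal{O}_\pi(m'-m-q)$ vanishes, and the spectral sequence collapses to zero. The main obstacle is just this bookkeeping, which shows the hypothesis is tight precisely at $q = \ell+1$. An alternative route through \cref{lemma:PhiA} together with the first case of \cref{vanishing} (which gives $\RHom_{\tilde X}(\mathcal{A}(m,0), \mathcal{A}(m',0)) = 0$ for $1 \leq m-m' \leq k-1$) would require an extra argument that $\RRR\tau_*$ is fully faithful on $\mathcal{A}(m,0)$, which is not automatic, so the direct approach is cleaner.
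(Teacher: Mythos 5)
Your proposal is correct and follows essentially the same route as the paper: adjunction along $i$, the Koszul-type description of the cohomology sheaves of $i^!\circ i_*$ via $\bigwedge^q\normal_{Z/X}$ with $\normal_{Z/X}\cong\mathcal{O}_\pi(-1)\otimes\pi^*\mathcal{V}'$, and the same bookkeeping showing all twists $m'-m-q$ land in $[-k,-1]$ so that $\RRR\pi_*$ kills them (the paper phrases this last step as the semiorthogonality in Orlov's projective bundle formula, and first reduces to coherent sheaves by d\'evissage, but these are cosmetic differences).
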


\begin{proof}
  It suffices to prove this for~$\mathcal{E}$ and~$\mathcal{F}$ coherent sheaves, by the standard d\'evissage argument for bounded complexes of coherent sheaves. By adjunction,
  \begin{equation}
    \begin{aligned}
      \RHom_X(\Phi_{m}(\mathcal{E}), \Phi_{m'}(\mathcal{F}))
      &\cong\RHom_X\left(i_{*}(\pi^{*}\mathcal{E}\otimes \mathcal{O}_{\pi}(m)), i_{*}(\pi^{*}\mathcal{F}\otimes \mathcal{O}_{\pi}(m'))\right) \\
      &\cong\RHom_Z\left(\pi^{*}\mathcal{E}\otimes \mathcal{O}_{\pi}(m), i^{!}\circ i_{*}(\pi^{*}\mathcal{F}\otimes \mathcal{O}_{\pi}(m'))\right).
    \end{aligned}
  \end{equation}
  To show this space vanishes, it suffices to show that
  \begin{equation}
    \label{eq:homcoh}
    \RHom_Z\left(\pi^{*}\mathcal{E}\otimes \mathcal{O}_{\pi}(m), \mathcal{H}^q(i^{!}\circ i_{*}(\pi^{*}\mathcal{F}\otimes \mathcal{O}_{\pi}(m'))[\ast]\right)=0,
  \end{equation}
  for all $q \in \mathbb{Z}$. By \cite[lemma~1.4]{MR3918435}, the Fourier--Mukai kernel of~$i^{!}\circ i_{*}\colon\derived^\bounded(Z) \to \derived^\bounded(Z)$ has cohomology sheaves~$\mathcal{H}^{q}=\Delta_{Z, *}(\bigwedge^{q}\mathcal{N})$, where~$\mathcal{N}\coloneqq\normal_{Z/X}\cong\mathcal{O}_{\pi}(-1)\otimes \pi^{*}(\mathcal{V}')$ for some vector bundle~$\mathcal{V}'$ on~$F$ of rank~$\ell+1$ by the standing hypothesis. We then find:
  \begin{equation}
  \begin{aligned}
    &\RHom_Z\left(\pi^{*}\mathcal{E}\otimes \mathcal{O}_{\pi}(m), \mathcal{H}^q(i^{!}\circ i_{*}(\pi^{*}\mathcal{F}\otimes \mathcal{O}_{\pi}(m'))[\ast]\right)\\
    &\cong\RHom_Z\left(\pi^{*}\mathcal{E}, \pi^{*}\mathcal{F}\otimes \mathcal{O}_{\pi}(m'-m)\otimes \bigwedge^{q}\mathcal{N}[\ast]\right) \\
    &\cong\RHom_Z\left(\pi^{*}\mathcal{E}, \pi^{*}\mathcal{F}\otimes \mathcal{O}_{\pi}(m'-m)\otimes\mathcal{O}_{\pi}(-q)\otimes \bigwedge^{q}\pi^{*}(\mathcal{V}')[\ast]\right) \\
    &\cong\RHom_Z\left(\pi^{*}\mathcal{E}, \pi^{*}(\mathcal{F}\otimes \bigwedge^{q}\mathcal{V}')\otimes \mathcal{O}_{\pi}(m'-m-q)[\ast]\right)
  \end{aligned}
  \end{equation}

  As~$\rk\mathcal{V}'=\ell+1$, this space is zero except possibly for~$0\leq q\leq \ell+1$. Combined with the assumption that~$m'-m\in [\ell-k+1, -1]$, we see that~$m'-m-q\in [-k, -1]$.

  Now~$\pi^{*}\mathcal{E}\in \pi^{*}\derived^\bounded(F)$ whilst~$\pi^{*}(\mathcal{F}\otimes \bigwedge^{q}\mathcal{V}')\otimes \mathcal{O}_{\pi}(m'-m-q)\in \pi^{*}\derived^\bounded(F)\otimes \mathcal{O}_{\pi}(m'-m-q)$, so using the semiorthogonality in Orlov's projective bundle formula, we deduce that~\eqref{eq:homcoh} vanishes, and the proposition follows.
\end{proof}

Let us show the remaining semiorthogonality \cref{enumerate:semiorthogonality-2}. We use an argument from the original proof of \cref{thm:BO} of Bondal--Orlov from \cite[theorem~3.6]{alg-geom/9506012}. For the convenience of the reader, we give a complete proof.

\begin{proposition}
  \label{SO-X'F}
  For all integers~$m\in [-k+\ell, -1]$, and for all~$\mathcal{F}\in \derived^\bounded(X')$ and~$\mathcal{G}\in \derived^\bounded(F)$, we have
  \begin{equation}
    \RHom_X\left(\RRR\tau_{*}\circ\LLL\tau^{\prime*}(\mathcal{F}),\Phi_{m}(\mathcal{G})\right)=0.
  \end{equation}
\end{proposition}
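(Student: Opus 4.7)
The plan is to invoke Grothendieck duality to pass to the right adjoint of the Bondal--Orlov functor $\RRR\tau_*\LLL\tau'^*$, namely $\RRR\tau'_*\tau^!$, and then compute directly. By the adjunction obtained by composing $\LLL\tau'^*\dashv\RRR\tau'_*$ with $\RRR\tau_*\dashv\tau^!$,
\begin{equation*}
  \RHom_X\bigl(\RRR\tau_*\LLL\tau'^*\mathcal{F},\Phi_m\mathcal{G}\bigr)
  \cong
  \RHom_{X'}\bigl(\mathcal{F},\RRR\tau'_*\tau^!\Phi_m\mathcal{G}\bigr),
\end{equation*}
so it suffices to show $\RRR\tau'_*\tau^!\Phi_m\mathcal{G}=0$ for $m\in[-k+\ell,-1]$.

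To compute $\tau^!\Phi_m\mathcal{G}=\tau^!\,i_*(\pi^*\mathcal{G}\otimes\mathcal{O}_\pi(m))$, I would use the base-change identity $\tau^!\circ i_*=j_*\circ p^!$ coming from the Cartesian square $\tau j=ip$, together with $p^!(-)=\LLL p^*(-)\otimes\omega_p[\ell]$ (valid since $p$ is a smooth $\PP^\ell$-bundle, with $\omega_p=\mathcal{O}(0,-\ell-1)\otimes p^*\pi^*\det\mathcal{V}'$). Combined with the identifications $p^*\pi^*=p'^*\pi'^*$ (from $\pi p=\pi' p'$) and $\mathcal{O}(a,b)=\mathcal{O}_{p'}(a)\otimes p'^*\mathcal{O}_{\pi'}(b)$, this gives
\begin{equation*}
  \tau^!\Phi_m\mathcal{G}
  =
  j_*\Bigl(\mathcal{O}_{p'}(m)\otimes p'^*\bigl(\pi'^*(\mathcal{G}\otimes\det\mathcal{V}')\otimes\mathcal{O}_{\pi'}(-\ell-1)\bigr)\Bigr)[\ell].
\end{equation*}
Applying $\RRR\tau'_*\circ j_*=i'_*\circ\RRR p'_*$ (from the Cartesian square $\tau'j=i'p'$) and the projection formula then yields
\begin{equation*}
  \RRR\tau'_*\tau^!\Phi_m\mathcal{G}
  =
  i'_*\Bigl(\pi'^*(\mathcal{G}\otimes\det\mathcal{V}')\otimes\mathcal{O}_{\pi'}(-\ell-1)\otimes\RRR p'_*\mathcal{O}_{p'}(m)\Bigr)[\ell].
\end{equation*}
Since $p'$ is a $\PP^k$-bundle and $m\in[-k+\ell,-1]\subseteq[-k,-1]$, the standard projective-bundle vanishing $\RRR p'_*\mathcal{O}_{p'}(m)=0$ forces the entire expression to vanish.

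The main technical obstacle is rigorously justifying the upper-shriek base change $\tau^!\circ i_*=j_*\circ p^!$, since the blowup square $(E,\tilde X,Z,X)$ is not Tor-independent in general. An alternative route that sidesteps this issue uses \cref{lemma:PhiA} to write $\Phi_m\mathcal{G}=\RRR\tau_*K$ with $K=j_*(p^*\pi^*\mathcal{G}\otimes\mathcal{O}(m,0))\in\mathcal{A}(m,0)$ and the formula $\tau^!(-)=\LLL\tau^*(-)\otimes\mathcal{O}(\ell E)$ (coming from $\omega_\tau=\mathcal{O}(\ell E)$), so that $\tau^!\Phi_m\mathcal{G}=\LLL\tau^*\RRR\tau_*K\otimes\mathcal{O}(\ell E)$; then the counit triangle from~\eqref{eqn:Blup1}, tensored with $\mathcal{O}(\ell E)$ and pushed forward by $\RRR\tau'_*$, reduces the problem to the analogous projection-formula computation (now involving $\RRR p'_*\mathcal{O}_{p'}(m-\ell)$ with $m-\ell\in[-k,-1]$), giving the same cohomological vanishing.
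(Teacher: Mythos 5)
Your high-level strategy --- passing to the right adjoint and proving $\RRR\tau'_{*}\tau^{!}\Phi_{m}\mathcal{G}=0$ --- is sound and genuinely different from the paper, which instead applies the left adjoint $\LLL\tau^{*}$ and controls the projection of $\LLL\tau'^{*}\mathcal{F}$ onto the complement of $\LLL\tau^{*}\derived^\bounded(X)$ via \cref{vanishing} and a careful rearrangement of semiorthogonal decompositions. But neither of your two implementations closes. Route~1 breaks exactly where you flag it: the blowup square is not Tor-independent, so $\tau^{!}\circ i_{*}\neq j_{*}\circ p^{!}$. The correct statement is $\tau^{!}i_{*}\mathcal{F}=\LLL\tau^{*}i_{*}\mathcal{F}\otimes\mathcal{O}_{\tilde X}(\ell E)$, where $\LLL\tau^{*}i_{*}\mathcal{F}$ has cohomology sheaves $j_{*}(p^{*}\mathcal{F}\otimes\Omega^{q}_{E/Z}(q))$ for $q=0,\dots,\ell$; your formula is only the top piece $q=\ell$ and omits the other $\ell$ graded pieces.

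Route~2 has the more serious gap. Writing the counit triangle $\LLL\tau^{*}\RRR\tau_{*}K\to K\to K''$ with $K''\in\mathcal{B}=\langle\mathcal{A}(\star,-\ell),\dots,\mathcal{A}(\star,-1)\rangle$ by \eqref{eqn:Blup1}, your (correct) computation that $\RRR\tau'_{*}(K\otimes\mathcal{O}(\ell E))=0$ only yields $\RRR\tau'_{*}(\LLL\tau^{*}\RRR\tau_{*}K\otimes\mathcal{O}(\ell E))\cong\RRR\tau'_{*}(K''\otimes\mathcal{O}(\ell E))[-1]$, so the entire burden shifts to the cone term --- and $\RRR\tau'_{*}(-\otimes\mathcal{O}(\ell E))$ does \emph{not} annihilate $\mathcal{B}$: for instance $\RRR\tau'_{*}\bigl(j_{*}(p^{*}\mathcal{O}_{\pi}(\ell)\otimes p'^{*}\mathcal{O}_{\pi'}(b))\otimes\mathcal{O}(\ell E)\bigr)\cong i'_{*}\mathcal{O}_{\pi'}(b-\ell)\neq0$. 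Since $\RRR\tau'_{*}(N)=0$ exactly when $N\in\langle\mathcal{A}(-k,\star),\dots,\mathcal{A}(-1,\star)\rangle$ by \eqref{eqn:Blup2}, you would need the specific projection $K''$ of $K\in\mathcal{A}(m,0)$ to lie in $\langle\mathcal{A}(a,b):a\in[-k+\ell,\ell-1],\,b\in[-\ell,-1]\rangle$, and establishing that is precisely the staggered decomposition plus \cref{vanishing} argument that constitutes the paper's proof; it is not a formal consequence of the triangle. If you want to keep the adjoint-side strategy, the cleanest repair is to fix route~1: using $\Omega_{E/Z}\cong p'^{*}\Omega_{\pi'}$ and $\mathcal{O}_{E/Z}(1)=\mathcal{O}(1,1)$, the $q$-th graded piece of $\tau^{!}\Phi_{m}\mathcal{G}$ contributes, after $\RRR\tau'_{*}j_{*}=i'_{*}\RRR p'_{*}$ and the projection formula, a factor $\RRR p'_{*}\bigl(p^{*}\mathcal{O}_{\pi}(m+q-\ell)\bigr)$ with $m+q-\ell\in[-k,-1]$ for all $q\in[0,\ell]$ and $m\in[-k+\ell,-1]$, so every piece vanishes and the statement follows.
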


\begin{proof}
  By \cref{lemma:PhiA}, we are to show that~$\RHom_X(\RRR\tau_{*}\circ\LLL\tau^{\prime*}\derived^\bounded(X'), \RRR\tau_{*}\mathcal{A}(m, 0))=0$. By adjunction, it is enough to show that
  \begin{equation}
    \label{eqn:X'An0}
    \RHom_{\tilde X}(\LLL\tau^{*}\circ\RRR\tau_{*}\circ \LLL\tau^{\prime*}\derived^\bounded(X'),\mathcal{A}(m, 0))=0.
  \end{equation}
  Take any~$\mathcal{E}\in\LLL\tau^{\prime*}\derived^\bounded(X')$, viewed as an object in~$\derived^\bounded(\tilde X)$. Then~$\LLL\tau^{*}\circ\RRR\tau_{*}(\mathcal{E})$ is its right projection to the component~$\LLL\tau^{*}\derived^\bounded(X)$ with respect to the semiorthogonal decomposition~\eqref{eqn:Blup1}.

  Using \eqref{eqn:Astar2}, we will use the following version of \eqref{eqn:Blup1}:
  \begin{equation}
    \label{equation:before}
    \begin{aligned}
      \derived^\bounded(\tilde X)=\langle
      \mathcal{A}(-k, -\ell), \mathcal{A}(-k+1, -\ell), \dots, \mathcal{A}(-1, -\ell), \mathcal{A}(0, -\ell) ; \quad\\
       \mathcal{A}(-k+1, -\ell+1), \dots, \mathcal{A}(0, -\ell+1), \mathcal{A}(1, -\ell+1); \quad \\
      \dots;\quad \\
      \mathcal{A}(-k+\ell-1, -1), \mathcal{A}(-k+\ell, -1), \dots, \mathcal{A}(\ell-1, -1);\quad \\
      \LLL\tau^{*}\derived^\bounded(X)\rangle.
    \end{aligned}
  \end{equation}
  In \cref{figure:mutation-example}(\subref{figure:before}) we make the order of terms~$\mathcal{A}(i,j)$ explicit in the case of~$k=6$ and~$\ell=4$.

  \begin{figure}[t]
    \centering

    \caption{Visualising the mutation of the semiorthogonal decomposition in \eqref{equation:before}, for~$k=6$ and~$\ell=4$.}
    \label{figure:mutation-example}

    \begin{subfigure}[b]{\textwidth}
      \centering
      \begin{tikzpicture}[scale=0.8]
        \draw[thick, ->] (-7,0) -- (4,0) node[right] {$k$};
        \draw[thick, ->] (0,-5) -- (0,1) node[above] {$\ell$};

        \draw[very thin,color=gray] (-6.9,-4.9) grid (3.9,0.9);

        \foreach \l in {0,...,3} {
          \foreach \k in {1,...,7} {
            \pgfmathtruncatemacro\i{\k + \l*7}
            \draw[fill] (-7+\k+\l,-4+\l) circle (2pt) node [right] {\i};
          }
        }
      \end{tikzpicture}
      \caption{Before the mutation}
      \label{figure:before}
    \end{subfigure}

    \begin{subfigure}[b]{\textwidth}
      \centering
      \begin{tikzpicture}[scale=0.8]
        \draw [rounded corners, fill=black!20] (-7,-4.3) -- node [below, near start] {$\mathcal{D}_1$} (-0.7,-4.3) -- (-0.7,-0.7) -- (-3.5,-0.7) -- cycle;
        \draw [rounded corners, fill=black!20] (-0.2,-4.3) -- node [below, near end] {$\mathcal{D}_2$} (0.7,-4.3) -- (4.3,-0.7) -- (-0.2,-0.7) -- cycle;

        \draw[thick, ->] (-7,0) -- (4,0) node[right] {$k$};
        \draw[thick, ->] (0,-5) -- (0,1) node[above] {$\ell$};

        \draw[very thin, color=gray] (-6.9,-4.9) grid (3.9,0.9);

        \foreach \k/\l [count=\i] in {
          -6/-4, -5/-4, -4/-4, -3/-4, -2/-4, -1/-4,
          -5/-3, -4/-3, -3/-3, -2/-3, -1/-3,
          -4/-2, -3/-2, -2/-2, -1/-2,
          -3/-1, -2/-1, -1/-1 } {
          \draw[fill] (\k,\l) circle (2pt) node [left] {\i};
        }
        \foreach \k/\l [count=\i] in {
          0/-4,
          0/-3, 1/-3,
          0/-2, 1/-2, 2/-2,
          0/-1, 1/-1, 2/-1, 3/-1 } {
          \pgfmathtruncatemacro\j{\i + 18}
          \draw[fill] (\k,\l) circle (2pt) node [right] {\j};
        }
      \end{tikzpicture}
      \caption{After the mutation, with the grouping according to \eqref{equation:grouping}}
      \label{figure:after}
    \end{subfigure}
  \end{figure}

  The virtue of this decomposition is that, thanks to \cref{vanishing}, among all the components appearing above, there are no~$\Hom$'s from~$\mathcal{A}(a, b)$ to~$\mathcal{A}(a', b')$ with~$a\geq 0$ and~$a'<0$. Therefore, we can rearrange the order of the components to obtain the following semiorthogonal decomposition
  \begin{equation}
    \derived^\bounded(\tilde X)=\langle \mathcal{D}_{1}, \mathcal{D}_{2}, \LLL\tau^{*}\derived^\bounded(X)\rangle,
  \end{equation}
  where
  \begin{equation}
    \label{equation:grouping}
    \begin{aligned}
      \mathcal{D}_{1}
      &\coloneqq\langle\mathcal{A}(-k, -\ell),  \dots, \mathcal{A}(-1, -\ell);  \mathcal{A}(-k+1, -\ell+1), \dots, \mathcal{A}(-1, -\ell+1); \\
      &\qquad\dots; \mathcal{A}(-k+\ell-1, -1), \dots, \mathcal{A}(-1, -1) \rangle, \\
      \mathcal{D}_{2}
      &\coloneqq\langle  \mathcal{A}(0, -\ell) ;\mathcal{A}(0, -\ell+1), \mathcal{A}(1, -\ell+1); \dots; \mathcal{A}(0, -1), \dots, \mathcal{A}(\ell-1, -1)\rangle.
    \end{aligned}
  \end{equation}
  See \cref{figure:mutation-example}(\subref{figure:after}) for the order of the terms~$\mathcal{A}(i,j)$ after the mutation for the case~$k=6$ and~$\ell=4$.

  Since~$\mathcal{D}_{1}\subset \langle\mathcal{A}(-k, \star), \dots, \mathcal{A}(-1, \star)\rangle$, using the semiorthogonality of~\eqref{eqn:Blup2}, we see that~$\mathcal{E}\in {}^{\perp}\mathcal{D}_{1}=\langle \mathcal{D}_{2}, \LLL\tau^{*}\derived^\bounded(X)\rangle$.

  We deduce that there is an object~$\mathcal{E}'\in \mathcal{D}_{2}$ fitting into the following distinguished triangle
  \begin{equation}
    \LLL\tau^{*}\circ\RRR\tau_{*}\mathcal{E}\to \mathcal{E}\to \mathcal{E}'\xrightarrow{+1}.
  \end{equation}
  Therefore, to show that~$\RHom_{\tilde X}(\LLL\tau^{*}\circ\RRR\tau_{*}\mathcal{E}, \mathcal{A}(m, 0))=0$ for all~$m\in [-k+\ell, -1]$, it suffices to show that there is no morphism from~$\mathcal{E}$ or~$\mathcal{E}'$ to~$\mathcal{A}(m, 0)$.

  For the vanishing of~$\RHom_X(\mathcal{E}, \mathcal{A}(m,0))$, it suffices to apply the semiorthogonality from~\eqref{eqn:Blup2}.

  For the second vanishing, since~$\mathcal{E}'\in \mathcal{D}_{2}$, to see that~$\RHom_X(\mathcal{E}', \mathcal{A}(m,0))=0$, it is enough to check that
  \begin{equation}
    \RHom_X(\mathcal{A}(a, b), \mathcal{A}(m, 0))=0
  \end{equation}
  for any~$(a, b)$ with~$\ell-1\geq a\geq 0$ and~$-1\geq b\geq a-\ell$. But this holds by \cref{vanishing}, since $a-m\leq \ell-1-(-k+\ell)=k-1$.

  The vanishing~\eqref{eqn:X'An0} is proved and so is the proposition.
\end{proof}

\begin{remark}[(Bondal--Orlov's fully faithfulness)]\label{rmk:ProofOfBO}
  In the literature the proof of \cref{thm:BO} is given only in the case that~$F$ is a point (see~e.g.~\cite[proposition~11.23]{MR2244106}, or the original~\cite[theorem~3.6]{alg-geom/9506012}), but fully faithfulness of the functor~$\RRR\tau_{*}\circ\LLL\tau'^{*}$ in the general case follows with hardly any extra work. Indeed, by adjunction and the fully faithfulness of~$\LLL\tau'^{*}$, we are reduced to show that for all~$\mathcal{E}, \mathcal{F}\in \LLL\tau'^{*}\derived^\bounded(X')$, the natural map~$\Hom_{\tilde X}(\mathcal{E}, \mathcal{F})\to \Hom_{\tilde X}(\LLL\tau^{*}\circ\RRR\tau_{*}\mathcal{E}, \mathcal{F})$ is an isomorphism.

  Keeping the notation from the proof of \cref{SO-X'F}, it is enough to show that~$\Hom_{\tilde X}(\mathcal{E}', \mathcal{F})=0$. By adjunction, this amounts to the vanishing $\RRR\tau'_{*}\left(\mathcal{A}(a,b)\otimes \omega_{\tau'}^{\vee}\right)=0$ for all~$(a, b)$ with $\ell-1\geq a\geq 0$ and~$-1\geq b\geq a-\ell$, which follows again from \cref{vanishing}. We stress that this \emph{is} the original proof of Bondal--Orlov.
\end{remark}

\subsection{Generation}
We follow Thomas' presentation in \cite{MR3821163} of Kuznetsov's argument for ``mutation-cancellation'', which is initially in the context of homological projective duality \cite{MR2354207}. This will be used to show that the sequence of subcategories in \eqref{eqn:goal}, shown to be semiorthogonal in the previous section, moreover generate the category~$\derived^\bounded(X)$.
%For a collection of triangulated subcategories $\mathcal{C}_{1}, \dots, \mathcal{C}_{r}$ of a triangulated category $\mathcal{T}$, we use the notation $\Span\{\mathcal{C}_{1}, \dots, \mathcal{C}_{r}\}$ for the triangulated subcategory generated by them.
The main result of this section is the following.

\begin{proposition}
  \label{prop:Generation}
  For all~$m\in [-k, -k+\ell-1]$, the subcategory~$\mathcal{A}(m, 0)$ is in the triangulated subcategory of~$\derived^\bounded(\tilde X)$ generated by the following subcategories:
  \begin{equation}
    \label{eqn:span}
    \mathcal{A}(\star, -\ell), \dots, \mathcal{A}(\star, -1); \mathcal{A}(-k+\ell, 0), \dots, \mathcal{A}(-1, 0), \LLL\tau^{\prime*}\derived^\bounded(X').
  \end{equation}
\end{proposition}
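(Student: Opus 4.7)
My plan is to apply Kuznetsov's mutation-cancellation strategy in the formulation of Thomas~\cite{MR3821163}. Starting with the semiorthogonal decomposition~\eqref{eqn:Blup2}, I would expand each block~$\mathcal{A}(a,\star)$ using~\eqref{eqn:Astar1} with~$c=0$, obtaining
\begin{equation*}
\derived^\bounded(\tilde X) = \langle \mathcal{A}(-k,-\ell), \ldots, \mathcal{A}(-k,0);\; \ldots;\; \mathcal{A}(-1,-\ell), \ldots, \mathcal{A}(-1,0);\; \LLL\tau^{\prime*}\derived^\bounded(X')\rangle.
\end{equation*}
A block-by-block comparison with the target span shows that the only ``missing'' pieces are the~$\mathcal{A}(m,0)$ for~$m\in[-k,-k+\ell-1]$ (present in the SOD but not in the target), while the only ``extra'' pieces are the~$\mathcal{A}(0,b)$ for~$b\in[-\ell,-1]$ (present in the target via~$\mathcal{A}(\star,b) = \langle \mathcal{A}(-k,b), \ldots, \mathcal{A}(0,b)\rangle$ but not in the expanded SOD). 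The total count on both sides agrees, so the plan is to show that the former can be replaced by the latter through a sequence of mutations, proving both that $\mathcal{T}=\derived^\bounded(\tilde X)$ and, in particular, the containment claimed by the proposition.

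The plan is to perform a cascade of right mutations on each~$\mathcal{A}(m,0)$, $m\in[-k,-k+\ell-1]$, moving it rightward through the SOD. Most of these intermediate mutations are trivial swaps, since by~\cref{vanishing} the block~$\mathcal{A}(m,0)$ is mutually orthogonal to~$\mathcal{A}(a,b)$ whenever $a-m\in[1,k-1]$ and~$b\in[-\ell+1,-1]$---covering all intermediate blocks except those at the boundaries $b=-\ell$ or~$b=0$. At the boundaries the mutations are nontrivial but still controlled by the third and fourth cases of~\cref{vanishing} combined with the Euler sequence on the relative~$\PP^\ell$-bundle~$p\colon E\to Z$, which gives an explicit Koszul-type resolution of $\mathcal{O}_E$ by the twists $\mathcal{O}(0,-j)$ for $j=1,\ldots,\ell+1$. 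Pushing forward by $j_*$ and tensoring against $p^*\pi^*\derived^\bounded(F)$, this produces the identification needed to recognise the mutated subcategory as sitting inside a block of the form $\mathcal{A}(0,b) \subset \mathcal{A}(\star,b)$ already contained in~$\mathcal{T}$.

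The hardest part will be bookkeeping: tracking the exact order and twists of the mutations so that the ``staircase'' structure works out, with the $\ell$ missing subcategories $\mathcal{A}(m,0)$ matching up one-to-one with the $\ell$ extra subcategories~$\mathcal{A}(0,b)$, and the cascade terminating precisely after~$\ell$ steps. Here the hypothesis $k>\ell$ is crucial: it guarantees that the columns indexed by~$m\in[-k,-k+\ell-1]$ are disjoint from those indexed by~$m\in[-k+\ell,-1]$, so the mutation cascade has the room it needs to proceed without self-intersection. The orthogonality relations with $\LLL\tau^{\prime*}\derived^\bounded(X')$ coming from~\eqref{eqn:Blup2} then guarantee that no unwanted contributions from the final block appear during the mutations, so that the final rearranged decomposition is exactly the target span.
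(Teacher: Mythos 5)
Your overall strategy---expand \eqref{eqn:Blup2} via \eqref{eqn:Astar1} into blocks $\mathcal{A}(a,b)$ with $a\in[-k,-1]$, $b\in[-\ell,0]$, and cancel the $\ell$ ``missing'' blocks $\mathcal{A}(m,0)$, $m\in[-k,-k+\ell-1]$, against the $\ell$ ``extra'' blocks $\mathcal{A}(0,b)$ by a cascade of mutations---is the same Kuznetsov--Thomas mutation-cancellation philosophy that the paper follows, and your identification of which transpositions are trivial is correct: for $a-m\in[1,k-1]$ and $b\in[-\ell+1,-1]$ the blocks $\mathcal{A}(m,0)$ and $\mathcal{A}(a,b)$ are completely orthogonal by the first two cases of \cref{vanishing}. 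The genuine gap is exactly at the step you defer to ``the Euler sequence'': you never compute what the nontrivial mutations produce. To reach the far right, $\mathcal{A}(m,0)$ must also pass through the rows $b=-\ell$ and $b=0$ and through $\LLL\tau^{\prime*}\derived^\bounded(X')$, and in each of these cases one of the two $\RHom$'s is \emph{not} covered by \cref{vanishing} (for instance $\RHom(\mathcal{A}(m,0),\mathcal{A}(a,-\ell))$ has $b_1-b_2=\ell$ but $a_1-a_2=m-a<0$, outside the range $0\leq a_1-a_2\leq k-1$ required by the last case; likewise $\RHom(\mathcal{A}(m,0),\mathcal{A}(a,0))$ and $\RHom(\mathcal{A}(m,0),\LLL\tau^{\prime*}\derived^\bounded(X'))$ need not vanish). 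The assertion that the mutated subcategory ``sits inside $\mathcal{A}(0,b)$'' is therefore the whole content of the proposition, not a consequence of the Koszul complex on $p\colon E\to Z$: that complex only shows $\mathcal{A}(m,0)\subset\langle\mathcal{A}(m,-\ell-1),\dots,\mathcal{A}(m,-1)\rangle$, which trades the problem for the equally inaccessible block $\mathcal{A}(m,-\ell-1)$. Note also that the block $\LLL\tau^{\prime*}\derived^\bounded(X')$ must do real work somewhere---the $E$-supported blocks alone cannot generate $\mathcal{A}(m,0)$---yet in your argument it only appears passively.

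For comparison, the paper's proof supplies precisely the missing computation. For $\mathcal{E}\in\mathcal{A}(m,0)$ it uses the rotated decomposition $\langle\mathcal{A}(m+1,\star),\dots,\mathcal{A}(-1,\star),\LLL\tau^{\prime*}\derived^\bounded(X'),\mathcal{A}(0,\star),\dots,\mathcal{A}(m+k,\star)\rangle$, splits $\mathcal{E}'\to\mathcal{E}\to\mathcal{E}''$ accordingly (this is where $\LLL\tau^{\prime*}\derived^\bounded(X')$ enters, via $\mathcal{E}''$), and then runs a second, double induction showing that the components $\mathcal{E}(a,b)$ of $\mathcal{E}'$ vanish for $b\in[-\ell-1,-\ell-1+a]$, so that $\mathcal{E}'$ lies in $\langle\mathcal{A}(\star,-\ell),\dots,\mathcal{A}(\star,-1)\rangle$. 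The non-formal input is an explicitly constructed morphism $\psi$ from $\mathcal{E}(a,b)$ to an object of $\mathcal{A}(a-k,b+1)$, built from a generator of $\RHom(\mathcal{O}_{\pi}(a+1),\mathcal{O}_{\pi}(a-k)[k])\cong\HH^{0}(F,\RR^{k}\pi_{*}\mathcal{O}_{\pi}(-k-1))\cong\groundfield$; the forced vanishing of $\psi$ kills the component. Your proposal has no counterpart to this step. To repair it you would either need to identify each nontrivial mutation explicitly (as the appendix does for $\ell=1$, where $\serre_{\tilde X}^{-1}\mathcal{A}(-k,-2)=\mathcal{A}(0,-1)$ is computed by hand, and even there the intermediate mutations $\mathcal{B}(m,0)$ are left uncontrolled and only tamed after applying $\RRR\tau_*$), or adopt the paper's component-killing induction.
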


Assuming this proposition, let us deduce the generation part in \cref{theorem:flip}\ref{enumerate:part-2}.

\begin{corollary}\label{cor:generation}
  The triangulated category~$\derived^\bounded(X)$ is generated by the subcategories
  \begin{equation}
    \Phi_{-k+\ell}(\derived^\bounded(F)),\dots, \Phi_{-1}(\derived^\bounded(F)), \RRR\tau_{*}\circ\LLL\tau^{\prime*}(\derived^\bounded(X'))
  \end{equation}
\end{corollary}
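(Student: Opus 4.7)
The plan is to lift the statement to $\derived^\bounded(\tilde X)$, produce a generating collection there that comes naturally from the blowup decomposition \eqref{eqn:Blup2}, invoke \cref{prop:Generation} to trim this collection down to the pieces that will become the desired components, and finally push everything down via $\RRR\tau_*$. This last step is legitimate because $\RRR\tau_*\circ\LLL\tau^* = \identity$ on $\derived^\bounded(X)$, so $\RRR\tau_*$ is essentially surjective and in particular sends a generating set of $\derived^\bounded(\tilde X)$ to a generating set of $\derived^\bounded(X)$.

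First I would start from the semiorthogonal decomposition \eqref{eqn:Blup2} and split each $\mathcal{A}(-i, \star)$ with $i \in [1,k]$ via the projective bundle formula \eqref{eqn:Astar1}, taking $m = 0$. This expresses $\derived^\bounded(\tilde X)$, as a triangulated category, as being generated by the family $\{\mathcal{A}(a, b) : a \in [-k, -1],\ b \in [-\ell, 0]\}$ together with $\LLL\tau^{\prime*}\derived^\bounded(X')$.

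Next, \cref{prop:Generation} asserts that for each $m \in [-k, -k+\ell-1]$ the subcategory $\mathcal{A}(m, 0)$ already lies in the triangulated subcategory generated by $\mathcal{A}(\star, -\ell), \ldots, \mathcal{A}(\star, -1)$, the remaining $\mathcal{A}(m', 0)$ with $m' \in [-k+\ell, -1]$, and $\LLL\tau^{\prime*}\derived^\bounded(X')$. Each $\mathcal{A}(\star, -j)$ is in turn generated by $\{\mathcal{A}(a, -j) : a \in [-k, -1]\}$ by \eqref{eqn:Astar2}, and these are already in our current list. Hence I may simply delete the ``low-range'' terms $\mathcal{A}(m, 0)$ with $m \in [-k, -k+\ell-1]$ from the generating set, leaving only the ``column'' $\{\mathcal{A}(m,0) : m \in [-k+\ell,-1]\}$, the ``lower strip'' $\{\mathcal{A}(a,b) : a \in [-k,-1],\ b \in [-\ell,-1]\}$, and $\LLL\tau^{\prime*}\derived^\bounded(X')$.

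Finally, I apply $\RRR\tau_*$. By \cref{lemma:PhiA}, $\RRR\tau_*\mathcal{A}(m, 0) = \Phi_m(\derived^\bounded(F))$, so the column $b = 0$ becomes precisely the $\Phi_m(\derived^\bounded(F))$ for $m \in [-k+\ell, -1]$. For $b \in [-\ell, -1]$, the projection formula together with flat base change along the cartesian square in \eqref{equation:flip} rewrites $\RRR\tau_*\mathcal{A}(a, b)$ in terms of $\RRR\pi'_*\mathcal{O}_{\pi'}(b)$, which vanishes since $\pi'$ is a $\PP^\ell$-bundle; the entire lower strip is therefore annihilated. What survives is exactly $\Phi_{-k+\ell}(\derived^\bounded(F)), \ldots, \Phi_{-1}(\derived^\bounded(F))$ together with $\RRR\tau_*\LLL\tau^{\prime*}\derived^\bounded(X')$, giving the corollary. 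The only nontrivial input is \cref{prop:Generation}; everything else is bookkeeping with the semiorthogonal decompositions already set up in \cref{section:standard-flip}.
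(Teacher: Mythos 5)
Your overall strategy is the same as the paper's: expand \eqref{eqn:Blup2} via \eqref{eqn:Astar1}, feed in \cref{prop:Generation} as the one nontrivial input, and push down by $\RRR\tau_*$ using \cref{lemma:PhiA} together with the vanishing of $\RRR\tau_*$ on the strip $b\in[-\ell,-1]$ (your base-change computation of that vanishing is correct, and is the same fact the paper extracts from \eqref{eqn:Blup1}). The problem is in the deletion step. You assert that $\mathcal{A}(\star,-j)$ is generated by $\{\mathcal{A}(a,-j):a\in[-k,-1]\}$ ``by \eqref{eqn:Astar2}''. It is not: since $\pi$ is a $\PP^k$\dash bundle, \eqref{eqn:Astar2} decomposes $\mathcal{A}(\star,-j)$ into $k+1$ pieces indexed by a window of length $k+1$, e.g.\ $a\in[-k,0]$, whereas your list has only the $k$ pieces with $a\in[-k,-1]$. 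The missing piece $\mathcal{A}(0,-j)$ meets $\langle\mathcal{A}(-k,-j),\dots,\mathcal{A}(-1,-j)\rangle$ only in the zero object, by the semiorthogonality of \eqref{eqn:Astar2}, so it certainly does not lie in that span. Consequently your trimmed collection is not known to contain the categories $\mathcal{A}(\star,-\ell),\dots,\mathcal{A}(\star,-1)$ that \cref{prop:Generation} takes as generators, and the deletion of the terms $\mathcal{A}(m,0)$ with $m\in[-k,-k+\ell-1]$ is unjustified. This is not a cosmetic slip: your trimmed list carries $\ell$ fewer copies of $\derived^\bounded(F)$ than the full decomposition of $\derived^\bounded(\tilde X)$, exactly the $\ell$ pieces $\mathcal{A}(0,-j)$ you dropped.

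The repair is immediate and turns your argument into the paper's, run in the opposite direction: do not shrink the lower strip to $a\in[-k,-1]$, but keep the full categories $\mathcal{A}(\star,-\ell),\dots,\mathcal{A}(\star,-1)$ in your list. The resulting collection is precisely \eqref{eqn:span}, and the paper verifies generation by showing that the triangulated subcategory $\mathcal{C}$ it generates absorbs every $\mathcal{A}(a,b)$ with $a\in[-k,-1]$ and $b\in[-\ell,0]$ --- using \cref{prop:Generation} to put the low-range $\mathcal{A}(m,0)$ \emph{into} $\mathcal{C}$ rather than to delete them --- and then concludes $\mathcal{C}=\derived^\bounded(\tilde X)$ from \eqref{eqn:Astar1} and \eqref{eqn:Blup2}. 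Since $\RRR\tau_*$ annihilates all of $\mathcal{A}(\star,-j)$, including the pieces $\mathcal{A}(0,-j)$ you wanted to avoid, the image downstairs is unchanged and the corollary follows exactly as in your final paragraph.
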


\begin{proof}
  Let~$\mathcal{C}\subset \derived^\bounded(\tilde X)$ be the triangulated subcategory generated by the subcategories in~\eqref{eqn:span}. By definition, $\mathcal{C}$ contains $\mathcal{A}(a,b)$ for all $a\in [-k, -1]$ and $b\in [-l, -1]$.

  Thanks to \cref{prop:Generation}, $\mathcal{C}$ also contains $\mathcal{A}(-k, 0), \dots, \mathcal{A}(-1, 0)$. Therefore, $\mathcal{C}$ contains~$\mathcal{A}(a, b)$ for all~$a\in [-k, -1]$ and~$b\in [-\ell, 0]$. By~\eqref{eqn:Astar1} (with~$m=0$), $\mathcal{C}$ contains~$\mathcal{A}(a,\star)$ for all~$a\in [-k, -1]$. By \eqref{eqn:Blup2}, we deduce that in fact
  \begin{equation}
    \mathcal{C}=\derived^\bounded(\tilde X),
  \end{equation}
  i.e.~$\derived^\bounded(\tilde X)$ is generated by the subcategories in~\eqref{eqn:span}. Applying the projection functor~$\RRR\tau_{*}$ which is essentially surjective, one finds that~$\derived^\bounded(X)$ is generated by the subcategories
  \begin{equation}
    \RRR\tau_{*}\mathcal{A}(\star, -l), \dots,\RRR \tau_{*}\mathcal{A}(\star, -1); \RRR\tau_{*}\mathcal{A}(-k+\ell, 0), \dots, \RRR\tau_{*}\mathcal{A}(-1, 0),\RRR\tau_{*}\circ\LLL\tau^{\prime*}\derived^\bounded(X').
  \end{equation}
  However, the first~$\ell$ terms are zero by \eqref{eqn:Blup1}, and the next~$k-\ell$ terms are in fact~$\Phi_{-k+\ell}(\derived^\bounded(F)), \dots, \Phi_{-1}(\derived^\bounded(F))$ by \cref{lemma:PhiA}. We get the desired generation.
\end{proof}

\begin{proof}[of \cref{prop:Generation}]
  Let~$\mathcal{C}\subset \derived^\bounded(\tilde X)$ be the triangulated subcategory generated by~\eqref{eqn:span}.

  We proceed by descending induction on~$m$. The induction hypothesis is that
  \begin{equation}
    \label{equation:induction-hypothesis}
    \mathcal{A}(m+1, 0), \dots, \mathcal{A}(-1, 0)\subset \mathcal{C}.
  \end{equation}
  The base case, which is when~$m=-k+\ell-1$, holds by assumption.
  For all~$a\in [m+1, -1]$, we have that
  \begin{equation}
    \mathcal{A}(a, -\ell), \dots,  \mathcal{A}(a, -1)\subset \mathcal{C}.
  \end{equation}
  Combined with the induction hypothesis \eqref{equation:induction-hypothesis} that~$\mathcal{A}(a, 0)\subset \mathcal{C}$, and with~\eqref{eqn:Astar1}, we have
  \begin{equation}
    \label{eqn:AstarInC}
    \mathcal{A}(a, \star)\subset \mathcal{C} \text{ for all } a\in [m+1, -1].
  \end{equation}
  Recall that similarly to \cref{remark:mutation-sod}, but now for the classical case of Orlov's blowup formula (after suitable mutations), we have also the semiorthogonal decomposition
  \begin{equation}
    \derived^\bounded(\tilde X)=\langle \mathcal{A}(m+1, \star), \dots, \mathcal{A}(-1, \star), \LLL\tau'^{*}\derived^\bounded(X'), \mathcal{A}(0, \star), \dots, \mathcal{A}(m+k, \star)\rangle.
  \end{equation}
  Thus for any object~$\mathcal{E}\in \mathcal{A}(m, 0)\subset\derived^\bounded(\tilde X)$, there is a distinguished triangle
  \begin{equation}
    \label{eqn:triangle-e}
    \mathcal{E}'\to \mathcal{E}\to \mathcal{E}''\xrightarrow{+1},
  \end{equation}
  with
  \begin{align}
    \label{eqn:E'}
    \mathcal{E}'&\in \langle \mathcal{A}(0, \star), \dots, \mathcal{A}(m+k, \star) \rangle \\
    \mathcal{E}''&\in \langle\mathcal{A}(m+1, \star), \dots, \mathcal{A}(-1, \star), \LLL\tau'^{*}\derived^\bounded(X') \rangle.
  \end{align}
  By \eqref{eqn:AstarInC}, $\mathcal{E}''\in \mathcal{C}$. Hence to show that~$\mathcal{E}\in \mathcal{C}$, it suffices to show that~$\mathcal{E}'\in \mathcal{C}$.

  We expand~\eqref{eqn:E'} using the projective bundle formula \eqref{eqn:Astar1} as
  \begin{equation}
    \label{equation:E'-sod}
    \begin{gathered}
      \mathcal{E}'\in\langle \mathcal{A}(0, -\ell-1), \dots, \mathcal{A}(0, -1);\mathcal{A}(1, -\ell-1), \dots, \mathcal{A}(1, -1); \\
      \dots;\mathcal{A}(m+k, -\ell-1), \dots, \mathcal{A}(m+k, -1)\rangle.
    \end{gathered}
  \end{equation}
  Let us denote by~$\mathcal{E}'(a, b)$ the projection of~$\mathcal{E}'$ to~$\mathcal{A}(a, b)$.
  For any~$(a, b)$ outside the rectangle described by~$a\in [0, m+k]$ and~$b\in [-\ell-1, -1]$,
  we have~$\mathcal{E}'(a,b)=0$ by \eqref{equation:E'-sod}.

  \paragraph{Claim}. {\ }
  We have that
  \begin{equation}
    \label{eqn:Eab}
    \mathcal{E}'(a, b)=0 \text{ for any } a\in [0, m+k]  \text{ and } b\in [-\ell-1, -\ell-1+a].
  \end{equation}
  \begin{proof}[of the claim]
    The proof is an induction on both~$a$ and~$b$,
    with descending order on~$a$ and with, for fixed~$a$, ascending order on~$b$.
    Recall that~$\mathcal{E}'(a,b)=0$ whenever~$(a,b)$ is outside the range used in \eqref{equation:E'-sod}.
    This will serve as the base case for our induction.

    Assuming that~$\mathcal{E}'(a', b')=0$ for all~$a'>a$, or~$a'=a$ and~$b'<b$, let us show $\mathcal{E}'(a, b)=0$. We have that
    \begin{equation}
      \RHom_{\tilde X}(\mathcal{E}'', \mathcal{A}(a-k, b+1))=0.
    \end{equation}
    Indeed, on one hand,~$\mathcal{E}''\in \langle\mathcal{A}(m+1, \star), \dots, \mathcal{A}(-1, \star), \LLL\tau'^{*}\derived^\bounded(X') \rangle$; on the other hand, $\mathcal{A}(a-k, b+1)\in \langle \mathcal{A}(-k, \star), \dots, \mathcal{A}(m, \star) \rangle$, since $a-k\in [-k, m]$. The vanishing follows from the semiorthogonality in \eqref{eqn:Blup2}.

    The second required vanishing result is that
    \begin{equation}
      \RHom_{\tilde X}(\mathcal{E}, \mathcal{A}(a-k, b+1))=0.
    \end{equation}
    Indeed, $\mathcal{E}\in \mathcal{A}(m, 0)$ and the vanishing follows from \cref{vanishing}. To see this, observe that we always have~$a-k\leq m$. If $a-k<m$, it is fine by \cref{vanishing}. If $a-k=m$, then $b+1\in [-\ell, -\ell+m+k]\subset[-\ell, -1]$, which is again covered by \cref{vanishing}.

    Consequently, $\RHom_{\tilde X}(\mathcal{E}', \mathcal{A}(a-k, b+1))=0$. However, by \cref{vanishing}, a non-zero morphism from~$\mathcal{E}'(a', b') $ to~$\mathcal{A}(a-k, b+1)$ only exists when~$a'>a$, or~$a'=a$ and~$b'\leq b$. But by the induction hypothesis, $\mathcal{E}'(a', b')=0$ for all~$a'>a$, or~$a'=a$ and~$b'<b$. Therefore, we deduce that
    \begin{equation}
      \label{eqn:Test}
      \RHom_X(\mathcal{E}'(a, b), \mathcal{A}(a-k, b+1))=0.
    \end{equation}
    Now write~$\mathcal{E}'(a,b)=j_{*}(p^{*}\circ\pi^{*}\mathcal{F}\otimes \mathcal{O}(a,b))$ for some~$\mathcal{F}\in \derived^\bounded(F)$. We define
    \begin{equation}
      \mathcal{G}\coloneqq j_{*}(p^{*}\circ\pi^{*}\mathcal{F}\otimes \mathcal{O}(a-k,b+1))[k+1],
    \end{equation}
    which is in~$\mathcal{A}(a-k, b+1)$. Making use of the triangle \eqref{eq:jtriangle} and
    \begin{equation}
      \RHom_E(p^{*}\circ\pi^{*}\mathcal{F}\otimes \mathcal{O}(a,b), p^{*}\circ\pi^{*}\mathcal{F}\otimes \mathcal{O}(a-k,b+1)[k+1])=0,
    \end{equation}
    we find
    \begin{equation}
      \begin{aligned}
        &\RHom_{\tilde X}(\mathcal{E}'(a, b), \mathcal{G}) \\
        &\qquad\cong\RHom_E(\LLL j^{*}\circ j_{*}(p^{*}\circ\pi^{*}\mathcal{F}\otimes \mathcal{O}(a,b)), p^{*}\circ\pi^{*}\mathcal{F}\otimes \mathcal{O}(a-k,b+1)[k+1])\\
        &\qquad\cong\RHom_E(p^{*}\circ\pi^{*}\mathcal{F}\otimes \mathcal{O}(a+1,b+1))[1], p^{*}\circ\pi^{*}\mathcal{F}\otimes \mathcal{O}(a-k,b+1)[k+1])
      \end{aligned}
    \end{equation}
    which contains the element~$\psi\coloneqq \operatorname{id}_{p^{*}\circ\pi^{*}\mathcal{F}}\otimes p^{*}(\alpha)\otimes p'^{*}(\operatorname{id}_{\mathcal{O}_{\pi'}(b+1)})$, where~$\alpha$ is a non-zero element in
    \begin{equation}
      \begin{aligned}
        \RHom_Z(\mathcal{O}_{\pi}(a+1), \mathcal{O}_{\pi}(a-k)[k])
        &\cong\HH^{k}(Z, \mathcal{O}_{\pi}(-k-1)) \\
        &\cong\HH^{0}(F, \RR^{k}\pi_{*}\mathcal{O}_{\pi}(-k-1)) \\
        &\cong \groundfield.
      \end{aligned}
    \end{equation}
    Then \eqref{eqn:Test} implies that~$\psi$ must be zero. This is only possible if~$p^{*}\circ\pi^{*}\mathcal{F}$ is itself zero.\checkthis{Sanity check: what exactly is used to deduce that $id_{p^{*}\circ\pi^{*}\mathcal{F}}=0$? A property of tensoring over $\mathcal{O}_E$ or a property of tensoring with line bundles?} Hence~$\mathcal{E}'(a, b)=0$. The induction for \eqref{eqn:Eab} is complete, which finishes the proof of the claim.
  \end{proof}
  In particular, the components~$\mathcal{E}'(a, -\ell-1)$ of~$\mathcal{E}'$ vanish for~$a\in [0, m+k]$. Therefore~$\mathcal{E}'\in \langle\mathcal{A}(\star, -\ell), \dots, \mathcal{A}(\star, -1)\rangle$, hence is contained in~$\mathcal{C}$. By \eqref{eqn:triangle-e}, this implies~$\mathcal{E}\in \mathcal{C}$. In other words,~$\mathcal{A}(m, 0)\subset \mathcal{C}$. The induction on~$m$ is complete and the proposition is proved.
\end{proof}

\section{Application: the Galkin--Shinder--Voisin diagram for cubic hypersurfaces}
\label{section:quadratic-fano-correspondence}
In this section we show there is a standard flip diagram associated to a smooth cubic hypersurface, so that \cref{thm:cubic} follows from \cref{theorem:flip}. In \cref{corollary:chow}, we use this result and \cite[theorem 3.4]{1910.06730v1} to deduce a corresponding isomorphism of Chow motives, generalizing \cite[theorem 5]{MR3639654}.

We now recall the setup of the Galkin--Shinder--Voisin diagram for cubic hypersurfaces. Let~$n\geq 2$, and let~$Y\subset\PP^{n+1}$ be a smooth cubic hypersurface. Denote~$F\coloneqq\fano(Y)$ the Fano variety of lines on~$Y$ and~$\hilbtwo[Y]$ the Hilbert scheme of two points on~$Y$, which will be discussed more explicitly in \cref{section:hilbert-squares}. Galkin--Shinder \cite[\S5]{1405.5154v2} and Voisin \cite{MR3646872} established the existence of the following diagram:

\begin{equation}
  \label{diag:cubic}
  \begin{tikzcd}
    & & E \arrow[d, hook, "j"] \arrow[ddll, swap, "p"] \arrow[ddrr, "p'"] \\
    & & D \arrow[dl, swap, "\tau"] \arrow[dr, "\tau'"] \\
    P_2 \arrow[r, hook, "i"] \arrow[rrd, swap, "\pi"] & \hilbtwo[Y] \arrow[dashed, rr, "\phi"] & & P_Y & P \arrow[l, hook', swap, "i'"] \arrow[dll, "\pi'"] \\
    & & F
  \end{tikzcd}
\end{equation}

In this diagram, we use the following notation:
\begin{itemize}
  \item $\pi'\colon P\to F$ is the universal~$\PP^{1}$-bundle;
  \item $\pi\colon P_{2}\coloneqq P^{[2]/F}\to F$ is the relative Hilbert square of~$\pi'$, which is a~$\mathbb{P}^2$\dash bundle;
  \item $P_{Y}\coloneqq \PP(\tangent_{\PP^{n+1}}|_{Y})\to Y$ is the projectivization of the restriction to~$Y$ of the tangent bundle of the projective space~$\PP^{n+1}$, which is a~$\PP^{n}$\dash bundle over~$Y$, and~$P_{Y}$ parametrises a point on~$Y$ together with a line in~$\PP^{n+1}$ passing through it;
  \item $D\coloneqq \{(L, z, x )\in \Gr(2,n+2)\times \hilbtwo[Y]\times Y \mid z\subset L, x\in L, \text{ and if } L\not\subset Y, z+x=Y\cap L\}$, \newline where~$\Gr(2,n+2)=\Gr(\mathbb{P}^1,\mathbb{P}^{n+1})$;
  \item given a point~$(L, z, x)$ of~$D$, the morphism~$\tau$ maps it to~$z\in \hilbtwo[Y]$ while the morphism~$\tau'$ maps it to~$(L, x)\in P_{Y}$;
  \item $E\coloneqq \{(L, z, x )\in D \mid L\subset Y\}$ is a divisor in $D$;
  \item $\phi$ is the Galkin--Shinder map \cite[(5.3)]{1405.5154v2} which sends~$z\in \hilbtwo[Y]$ to~$(L, x)\in P_{Y}$, where $L$ is the line determined (i.e.~spanned) by~$z$, and~$x$ is the residual intersection point of~$L$ with~$Y$, i.e.~$L\cap Y=z+x$, when~$L$ is not completely contained in~$Y$;
  \item by Voisin \cite[proposition~2.9]{MR3646872} the upper two trapezoids are blowup diagrams, and in particular all varieties appearing above are smooth and projective;
  \item the outer square is cartesian, and observe that~$\pi$ and~$p'$ are~$\mathbb{P}^{2}$\dash bundles while~$\pi'$ and~$p$ are~$\mathbb{P}^{1}$-bundles.
\end{itemize}
Denote by~$\mathcal{S}$ the restriction of the tautological rank-2 bundle over the Grassmannian~$\Gr(2,n+2)$ to~$F$.
%Note that since~$\mathcal{S}$ is of rank 2, there is a canonical~$F$-isomorphism~$\PP(\mathcal{S})\simeq \PP(\mathcal{S}^{\vee})$.

In \cref{corollary:quadratic-fano-standard-flip} we will show that this diagram is in fact a standard flip diagram, as in~\eqref{equation:flip}.

\begin{lemma}
  \label{lem:disjoint}
  The natural morphism~$i\colon P_{2}\to Y^{[2]}$ is a closed immersion.
\end{lemma}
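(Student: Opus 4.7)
The plan is to realise $P_2$ as a fibre product of $(\PP^{n+1})^{[2]}$ with $F$ over the Grassmannian $\Gr(2,n+2)$, which exhibits $P_2$ as a closed subscheme of $(\PP^{n+1})^{[2]}$ sitting inside the closed subscheme $\hilbtwo[Y]$. The key construction is the ``span'' morphism
\[
  \sigma \colon (\PP^{n+1})^{[2]} \longrightarrow \Gr(2,n+2),
\]
which sends a length-$2$ subscheme to the unique line containing it. To realise $\sigma$ as a morphism of schemes, I would use the universal length-$2$ subscheme $\mathcal{Z} \subset (\PP^{n+1})^{[2]} \times \PP^{n+1}$: pushing forward $\mathcal{O}_{\mathcal{Z}}(1)$ along the first projection gives a rank-$2$ locally free quotient of $\mathcal{O}^{\oplus n+2}$, which classifies $\sigma$ by the universal property of the Grassmannian. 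This identifies $\sigma$ with the relative Hilbert square of the tautological $\PP^1$-bundle over $\Gr(2,n+2)$; in particular $\sigma$ is a $\PP^2$-bundle.

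\paragraph{Execution.}
Since $F \hookrightarrow \Gr(2,n+2)$ is a closed immersion (cut out by the section of $\Sym^3\mathcal{S}^\vee$ induced by the defining cubic of $Y$), the base change
\[
  W \coloneqq (\PP^{n+1})^{[2]} \times_{\Gr(2,n+2)} F \hookrightarrow (\PP^{n+1})^{[2]}
\]
is again a closed immersion. By compatibility of relative Hilbert schemes with base change, $W$ is canonically isomorphic to $P^{[2]/F} = P_2$, since $P = \PP(\mathcal{S}) \to F$ is by definition the restriction of the tautological $\PP^1$-bundle to $F$. Thus $P_2 \hookrightarrow (\PP^{n+1})^{[2]}$ is a closed immersion, and functorially this immersion factors through $\hilbtwo[Y] \hookrightarrow (\PP^{n+1})^{[2]}$: an $S$-point of $P_2$ is a flat family of length-$2$ subschemes contained in a family of lines on~$Y$, hence in particular an $S$-point of $\hilbtwo[Y]$. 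Since a closed immersion factoring through a second closed immersion is itself a closed immersion, $i \colon P_2 \to \hilbtwo[Y]$ is a closed immersion.

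\paragraph{Expected difficulty.}
The only subtle point is verifying that $\sigma$ exists as a genuine morphism and that its base change along $F \hookrightarrow \Gr(2,n+2)$ recovers $P_2$ scheme-theoretically, rather than merely on closed points. Both assertions boil down to the bookkeeping in the universal-quotient construction sketched above, and can equivalently be phrased as the naturality of relative Hilbert squares under restriction of the base: taking $P^{[2]/F}$ agrees with restricting $\sigma$ to $F$. No tangent-space or deformation-theoretic input is needed; the argument is entirely functorial.
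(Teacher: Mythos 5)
Your proof is correct, but it takes a genuinely different route from the paper's. The paper argues directly and pointwise: it spells out what $i$ does on closed points (a reduced length\dash 2 subscheme of a line $L$ maps to the corresponding subscheme of $Y$, a non-reduced one is remembered via its tangent direction $\tangent_xL\subset\tangent_xY$) and then checks that for distinct lines $L_1\neq L_2$ the images $\hilbtwoLone$ and $\hilbtwoLtwo$ are disjoint, splitting into the cases $L_1\cap L_2=\emptyset$ and $L_1\cap L_2=\{z\}$. That is an elementary injectivity check on closed points; the scheme-theoretic content (unramifiedness, so that the proper injection is actually a closed immersion) is left implicit. Your argument instead realises $P_2$ as the base change of the span morphism $\sigma\colon(\PP^{n+1})^{[2]}\to\Gr(2,n+2)$ along the closed immersion $F\hookrightarrow\Gr(2,n+2)$, using that $\sigma$ exhibits $(\PP^{n+1})^{[2]}$ as the relative Hilbert square of the universal line and that relative Hilbert schemes commute with base change; factoring the resulting closed immersion $P_2\hookrightarrow(\PP^{n+1})^{[2]}$ through the closed immersion $\hilbtwo[Y]\hookrightarrow(\PP^{n+1})^{[2]}$ then finishes the job, since a composite of morphisms which is a closed immersion, with the second one a closed immersion, forces the first to be one as well. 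What each approach buys: the paper's proof is short, requires no machinery, and makes the set-theoretic geometry vivid (which is reused in the normal bundle computation that follows); yours is functorial, gives the scheme-theoretic statement with no case analysis, and dovetails with the paper's own later use of the morphism $\hilbtwo[Y]\to\Gr(2,n+2)$ and the identification $P_2=\PP(\Sym^2\mathcal{S})$ in the proof of \cref{lem:NormalBundle}. The one point you flag yourself --- that $\sigma$ exists as a morphism and that its fibre product with $F$ recovers $P_2$ scheme-theoretically --- is indeed the only thing to verify carefully, and it goes through via cohomology and base change for the rank\dash 2 locally free sheaf $\pr_{1,*}\mathcal{O}_{\mathcal{Z}}(1)$.
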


\begin{proof}
  Let us describe the morphism~$i$ on closed points. Let~$[L]\in F$ be a point representing (by abuse of notation) a line~$L\subset Y$. Then for two distinct points~$x, y\in L$, the morphism~$i$ sends~$x+y$ to the length-2 subscheme~$\{x, y\}\in Y^{[2]}$; while for the double point~$2x\in L^{[2]}$, it sends it to the length-2 subscheme of $Y$ supported on $x$ with the tangent direction given by the 1-dimensional subspace~$\tangent_{x}L\subset\tangent_{x}Y$.

  Now observe that if~$L_1\neq L_2$ are two distinct points in~$F$, then~$\hilbtwoLone\cap\hilbtwoLtwo=\emptyset$. To see this, we discuss the two possibilities:
  \begin{itemize}
    \item if~$L_1\cap L_2=\emptyset$, then this is immediate;
    \item if~$L_1\cap L_2=\{z\}$, then points on~$\hilbtwoLone$ and~$\hilbtwoLtwo$ whose support contains~$z$ are distinguished by the support of the second point, or the tangent direction.
  \end{itemize}
  We leave the reader to check the injectivity of the map between the tangent spaces.
\end{proof}

%\begin{lemma}
%  \label{lem:NormalBundle}
%  We have the following description for the normal bundles of~$i$:
%  \begin{enumerate}[label=(\roman*)]
%    \item\label{enumerate:normal-1} $\normal_{P_{2}/\hilbtwo[Y]}\cong\mathcal{O}_\pi(-1)\otimes\pi^*(\mathcal{S})$.
%    \item\label{enumerate:normal-2} $\normal_{P/P_{Y}}\cong\mathcal{O}_{\pi'}(-1)\otimes \pi^{\prime*}(\Sym^2\mathcal{S})$.
%  \end{enumerate}
%\end{lemma}

\begin{lemma}
\label{lem:NormalBundle}
The normal bundle of $i$ is of the form
\begin{equation}
	 \normal_{P_{2}/\hilbtwo[Y]}\cong \mathcal{O}_\pi(-1)\otimes\pi^*(\mathcal{V}'),
\end{equation}
for some rank-2 vector bundle $\mathcal{V}'$ on $F$.
\end{lemma}

\begin{proof}
	It is enough to show that when restricted to each fiber of $\pi$, the vector bundle bundle $\normal_{P_{2}/\hilbtwo[Y]}$ is a direct sum of two copies of the restriction of $\mathcal{O}_{\pi}(-1)$. We will use the construction of Voisin in the proof of \cite[proposition~2.9]{MR3646872}, which we recall now. Denote~$\Gr\coloneqq\Gr(2, n+2)$ the Grassmannian of lines in~$\mathbb{P}^{n+1}$. Let~$\sigma\colon Q\to \Gr$ be the universal~$\PP^1$\dash bundle associated to the tautological rank-2 vector bundle.

Pulling back the~$\PP^1$\dash bundle using the natural morphism~$\hilbtwo[Y]\to \Gr$, we obtain the morphism~$\sigma_1\colon Q_1\to \hilbtwo[Y]$. Denote the evaluation morphism~$e_{1}\colon Q_{1}\to \PP^{n+1}$. Then the preimage~$e_1^{-1}(Y)$ is a divisor in~$Q_{1}$ of degree 3 over~$\hilbtwo[Y]$, with one component~$D_1$ being the universal subscheme and the other component exactly~$D$. We can summarise the notation as
  \begin{equation}
    \begin{tikzcd}
      & Y \arrow[r, hook] & \PP^{n+1} \\
      D_1\cup D \arrow[r, equals] & e_1^{-1}(Y) \arrow[u] \arrow[r, hook] & Q_1 \arrow[u, swap, "e_1"] \arrow[r] \arrow[d, swap, "\sigma_1"] & Q \arrow[d, "\sigma"] \\
      & & \hilbtwo[Y] \arrow[r] & \Gr
    \end{tikzcd}.
  \end{equation}
  Then~$P_2$ is the zero set of a regular section of the rank-2 vector bundle~$\sigma_{1, *}(\mathcal{O}_{\sigma_{1}}(3)\otimes \mathcal{O}_{Q_{1}}(-D_{1}))$. Therefore the normal bundle of~$P_{2}$ inside~$\hilbtwo[Y]$ is its restriction to~$P_{2}$, which is
 \begin{equation}
   \label{eqn:NormalBundle1}
    \pr_{1,*}\left( \pr_2^*\mathcal{O}_\varpi(3)\otimes\mathcal{O}(-\mathcal{U}) \right)
  \end{equation}
  where~$\mathcal{U}$ is the universal subscheme (i.e.~the restriction of~$D_1$) in the diagram
\begin{equation}
    \begin{tikzcd}
      \mathcal{U} \arrow[r, hook] \arrow[rd] & P_{2}\times_{F} \PP(\mathcal{S}) \arrow[r, "\pr_2"] \arrow[d, "\pr_1"] & \PP(\mathcal{S}) \arrow[d, "\varpi"]\\
      & P_{2} \arrow[r, "\pi"] & F
    \end{tikzcd}
  \end{equation}
  and where $\mathcal{S}$ denotes the tautological rank two vector bundle on $\Gr$. Let us compute the restriction of the vector bundle \eqref{eqn:NormalBundle1} to each fiber of $\pi$.

  For any~$[L]\in F$, the bundle \eqref{eqn:NormalBundle1} restricted to~$\pi^{-1}(L)\cong\hilbtwo[L]$ is given by
  \begin{equation}
    \label{equation:Nprime-identification}
    \pr_{1,*}\left( \pr_2^*\mathcal{O}_L(3)\otimes\mathcal{O}_{\hilbtwo[L]\times L}(-U) \right)
  \end{equation}
  where~$U$ is the universal subscheme (the fiber of $D_1$ over $L$) in the diagram
  \begin{equation}
    \begin{tikzcd}
      U \arrow[r, hook] \arrow[rd] & \hilbtwo[L]\times L \arrow[r, "\pr_2"] \arrow[d, "\pr_1"] & L \\
      & \hilbtwo[L]
    \end{tikzcd}
  \end{equation}
  and~$U\cong L\times L$, so that~$U\to\hilbtwo[L]$ is the quotient by the involution and~$U\to L$ is the projection on the first factor. We obtain that
  \begin{equation}
    \mathcal{O}_{\hilbtwo[L]\times L}(U)\cong\mathcal{O}_{\mathbb{P}^2\times\mathbb{P}^1}(1,2).
  \end{equation}
  Indeed, in the Chow ring of~$\mathbb{P}^2\times\mathbb{P}^1$ we compute that~$U\cdot(\{z\}\times L)=2$ for~$z\in\hilbtwo[L]$, and we have that~$U|_{\hilbtwo[L]\times\{x\}}\cong\mathcal{O}_{\hilbtwo[L]}(1)$ for~$x\in L$.

%  Next, globalizing the computation, we have that~$\mathcal{O}(\mathcal{U})\cong\pr_1^*\mathcal{O}_\pi(1)\otimes\pr_2^*\mathcal{O}_\varpi(2)$. In fact, via the identification~$P_{2}=\PP(\Sym^{2}\mathcal{S})\to F$, $\mathcal{U}$ corresponds to the canonical element in
%  \begin{equation}
%    \HH^{0}(P_{2}\times_{F} \PP(\mathcal{S}^{\vee}), \pr_{1}^{*}\mathcal{O}_{\pi}(1)\otimes \pr_{2}^{*}\mathcal{O}_{\varpi}(2))\cong \HH^{0}(F, \Sym^{2}\mathcal{S}^{\vee}\otimes \Sym^{2}\mathcal{S}).
%  \end{equation}
  As a result, we can compute the vector bundle  \eqref{equation:Nprime-identification} as
    \begin{equation}
  \begin{aligned}
  \pr_{1,*}\left( \pr_2^*\mathcal{O}_L(3)\otimes\mathcal{O}_{\hilbtwo[L]\times L}(-U) \right)
  &\cong\pr_{1,*}\left( \pr_{1}^{*}\mathcal{O}_{\hilbtwo[L]}(-1)\otimes\pr_2^*\mathcal{O}_L(1) \right) \\
  &\cong\mathcal{O}_{\hilbtwo[L]}(-1)\otimes\pr_{1,*}\left(\pr_2^*\mathcal{O}_L(1) \right) \\
  &\cong\mathcal{O}_{\hilbtwo[L]}(-1)^{\oplus 2}.
  \end{aligned}
  \end{equation}
This completes the proof.
\end{proof}

\begin{remark}
	A precise formula for the normal bundle $\normal_{P_{2}/\hilbtwo[Y]}$ is recently obtained in Renjie Lyu's thesis \cite[\S 4.2]{LyuThesis}. In our notation, if we identify $P_2$ with $\PP(\Sym^2\mathcal{S})$, then $\mathcal{V}'\cong \mathcal{S}\otimes \mathcal{O}_F(3)$, where $\mathcal{O}_F(1)$ is the restriction of the Pl\"ucker polarisation on $\Gr(2, n+2)$.
\end{remark}
\begin{corollary}
  \label{corollary:quadratic-fano-standard-flip}
  The diagram \eqref{diag:cubic} is a standard flip diagram.
\end{corollary}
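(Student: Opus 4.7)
The plan is to identify each piece of diagram \eqref{diag:cubic} with the corresponding piece of the general standard flip diagram \eqref{equation:flip}, and check that all structural conditions hold. Concretely, I set $F=\fano(Y)$, $X=\hilbtwo[Y]$, $X'=P_Y$, $Z=P_2$, $Z'=P$, and I claim the relevant integers and bundles are $k=2$, $\ell=1$, $\mathcal{V}=\Sym^{2}\mathcal{S}$, and $\mathcal{V}'=\mathcal{S}$ (both bundles on $F$, of ranks $3$ and $2$ respectively, so $k+1=\rk\mathcal{V}$ and $\ell+1=\rk\mathcal{V}'$ as required).

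First, I would verify the projective-bundle descriptions: by construction of the relative Hilbert square, $\pi\colon P_2=P^{[2]/F}\to F$ is canonically isomorphic to $\PP(\Sym^{2}\mathcal{S})\to F$, and $\pi'\colon P\to F$ is $\PP(\mathcal{S})\to F$ (using $\PP(\mathcal{S})\simeq\PP(\mathcal{S}^{\vee})$ as $\mathcal{S}$ has rank $2$). These coincide with the requirements $Z=\PP(\mathcal{V})$ and $Z'=\PP(\mathcal{V}')$. Next, $i\colon P_2\hookrightarrow\hilbtwo[Y]$ is a closed immersion by \cref{lem:disjoint}, so $Z$ sits inside $X$ as a smooth closed subvariety. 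The crucial normal bundle condition $\normal_{Z/X}\simeq\mathcal{O}_\pi(-1)\otimes\pi^{*}\mathcal{V}'$ is then exactly the content of \cref{lem:NormalBundle}\ref{enumerate:normal-1}: $\normal_{P_2/\hilbtwo[Y]}\cong\mathcal{O}_{\pi}(-1)\otimes\pi^{*}\mathcal{S}$. Symmetrically, part \ref{enumerate:normal-2} of the same lemma gives $\normal_{P/P_Y}\cong\mathcal{O}_{\pi'}(-1)\otimes\pi'^{*}\Sym^{2}\mathcal{S}$, which is the analogous condition on the $X'$ side.

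Once these identifications are in place, the blowup $\tau\colon D\to\hilbtwo[Y]$ of $\hilbtwo[Y]$ along $P_2$ has exceptional divisor $E\cong\PP(\normal_{P_2/\hilbtwo[Y]})\cong\PP(\mathcal{O}_\pi(-1)\otimes\pi^{*}\mathcal{V}')\cong\PP(\pi^{*}\mathcal{V}')\cong\PP(\mathcal{V})\times_F\PP(\mathcal{V}')=Z\times_F Z'$, and by the projection formula the restricted normal bundle $\normal_{E/D}$ is $\mathcal{O}(-1,-1)$ on this product. This is precisely the situation of \eqref{equation:flip}, and contracting $E$ along the other fiber direction gives the $\PP^1$-bundle $p'\colon E\to Z'=P$ over $F$, realizing $E$ as $\PP(\pi'^{*}\mathcal{V})$. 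The remaining statement, that the upper two trapezoids are blowup diagrams and the outer square is cartesian, is exactly \cite[proposition~2.9]{MR3646872} combined with the description of the varieties $D$, $E$ in the bullet list.

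Finally I would note $k=2>1=\ell$, so the hypothesis of \cref{theorem:flip} is satisfied and \eqref{diag:cubic} is a genuine standard flip diagram (not a flop). There is essentially no hard step here: given the structural lemmas \cref{lem:disjoint} and \cref{lem:NormalBundle} and Voisin's blowup result, the proof is a matter of matching notation and reading off the ranks. The only point that requires care is the identification $P_2\cong\PP(\Sym^{2}\mathcal{S})$, but this is standard for relative Hilbert squares of $\PP^1$-bundles and is in fact already implicitly used in the proof of \cref{lem:NormalBundle}.
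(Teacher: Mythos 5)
Your proposal is correct and matches the paper's own (much terser) proof: the paper likewise just sets $X=\hilbtwo[Y]$, $X'=P_Y$, $Z=P_2$, $Z'=P$, $k=2$, $\ell=1$, $\mathcal{V}=\Sym^2\mathcal{S}$, $\mathcal{V}'=\mathcal{S}$ and cites \cref{lem:NormalBundle}. You have simply spelled out the routine verifications (closed immersion via \cref{lem:disjoint}, the projective-bundle identifications, and Voisin's blowup description) that the paper leaves implicit.
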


\begin{proof}
  Indeed, thanks to \cref{lem:NormalBundle}, we take in the notation of \eqref{equation:flip}, $X\coloneqq \hilbtwo[Y]$, $X'\coloneqq P_{Y}$, $Z\coloneqq P_{2}$, $Z'\coloneqq P$, $D\coloneqq \tilde X$, $k=2$, $\ell=1$, and $\mathcal{V}=\Sym^{2}\mathcal{S}$.
\end{proof}

Hence we obtain the following.
\begin{proof}[of \cref{thm:cubic}]
  By \cref{corollary:quadratic-fano-standard-flip}, we can apply \cref{theorem:flip} and Remark \ref{remark:mutation-sod} to the diagram~\eqref{diag:cubic} to get semiorthogonal decompositions
  \begin{equation}
    %\begin{aligned}
      \derived^\bounded(\hilbtwo[Y])
      =\langle\Phi_{-1}(\derived^\bounded(F)),\RRR\tau_{*}\circ\LLL\tau^{\prime*}\derived^\bounded(P_{Y})\rangle
      % &=\langle\RRR\tau_{*}\circ\LLL\tau^{\prime*}\derived^\bounded(P_{Y}), \Phi_{0}(\derived^\bounded(F))\rangle
    %\end{aligned}
  \end{equation}
  where~$\Phi_{m}(-)=i_{*}(\pi^{*}(-)\otimes \mathcal{O}_{\pi}(m))\colon\derived^\bounded(F)\to \derived^\bounded(\hilbtwo[Y])$ is the fully faithful functor from \eqref{equation:Phi-definition}.

  We can now further decompose the~$\PP^n$\dash bundle~$P_Y\to Y$ using the projective bundle formula, and conclude.
\end{proof}

% leaving it out for now
%\begin{remark}[Schur-finiteness]
%\end{remark}

\begin{remark}[(Grothendieck ring of categories)]
  \label{remark:K0-cats}
  The~$Y$-$\fano(Y)$\dash relation from \cite[theorem~5.1]{1405.5154v2} is the equality
  \begin{equation}
    [\hilbtwo[Y]]=[\mathbb{P}^n][Y]+\mathbb{L}^2[\fano(Y)]
  \end{equation}
  in the Grothendieck ring of varieties~$\Kzero(\Var/\groundfield)$, for a cubic hypersurface~$Y\hookrightarrow\mathbb{P}^{n+1}$.

  Taking the motivic measure in the Grothendieck ring of dg~categories~$\Kzero(\dgCat/\groundfield)$ from \cite{MR2051435} gives the relation
  \begin{equation}
    [\derived^\bounded(\hilbtwo[Y])]=(n+1)[\derived^\bounded(Y)]+[\derived^\bounded(\fano(Y))],
  \end{equation}
  which is suggestive of a semiorthogonal decomposition of the form \eqref{equation:sod-quadratic-fano}, and \cref{thm:cubic} indeed provides this.
\end{remark}

Using Jiang's analysis of the Chow theory of a standard flip diagram in \cite{1910.06730v1}, we can obtain the following improvement of Laterveer's result in \cite[theorem~5]{MR3639654} to integral coefficients. The same result is independently obtained in Renjie Lyu's upcoming thesis \cite[\S 4]{LyuThesis} without resorting to \cite{1910.06730v1}. For a smooth scheme~$X$ over some ground field~$\groundfield$, we write~$\mathfrak{h}(X)=(X, \Delta_{X})$ for the motive of $X$ in the rigid tensor category of Chow motives over~$\groundfield$, see \cite{MR3052734}. By~$\mathfrak{h}(X)(-i)$ we denote the motive~$\mathfrak{h}(X) \otimes \mathbb{L}^i$, where~$\mathbb{L} = (\PP^1,[\PP^1 \times \{0\}])$ is the Lefschetz motive.

\begin{corollary}[(An isomorphism of Chow motives)]
  \label{corollary:chow}
  For an arbitrary base field~$\groundfield$, there is an isomorphism of Chow motives (with integral coefficients) over~$\groundfield$:
  \begin{equation}
    \mathfrak{h}(F)(-2) \oplus \bigoplus_{i=0}^{n} \mathfrak{h}(Y)(-i) \xrightarrow{\cong} \mathfrak{h}(Y^{[2]}),
  \end{equation}
  where the isomorphism is given by
  \begin{equation}
    \left(i_{*}\circ \pi^{*}, \bigoplus_{i=0}^n\tau_{*}\circ \tau'^{*}\circ \cdot \mathrm{c}_{1}(\mathcal{O}_{u}(1))^{i}\circ u^{*}\right)
  \end{equation}
  where~$u\colon P_{Y}\to Y$ the~$\PP^{n}$-bundle and the other notation is as in \eqref{diag:cubic}.
\end{corollary}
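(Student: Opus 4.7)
The plan is to apply Jiang's motivic analog of \cref{theorem:flip} directly to the standard flip diagram \eqref{diag:cubic} established in \cref{corollary:quadratic-fano-standard-flip}, and then to split the Chow motive of the $\PP^n$\dash bundle~$u\colon P_Y\to Y$ via the standard projective bundle formula for Chow motives.

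First, I would invoke \cite[theorem 3.4]{1910.06730v1}, which upgrades \cref{theorem:flip} to an isomorphism in the rigid tensor category of Chow motives over~$\groundfield$ with integral coefficients: for a standard flip diagram with parameters~$k\geq \ell$, one obtains an isomorphism
\begin{equation}
  \mathfrak{h}(X')\oplus\bigoplus_{m=1}^{k-\ell}\mathfrak{h}(F)(-\ell-m)\xrightarrow{\cong}\mathfrak{h}(X),
\end{equation}
where the summand indexed by~$m$ is realized by the correspondence $i_*\circ\pi^*\circ(\cdot c_1(\mathcal{O}_\pi(1))^{\ell+m-1})$, and the~$X'$ summand by~$\RRR\tau_*\circ\LLL\tau'^*$ at the level of correspondences (i.e., $\tau_*\circ\tau'^*$). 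Substituting the parameters~$k=2$, $\ell=1$ obtained in \cref{corollary:quadratic-fano-standard-flip}, the range~$m=1,\ldots,k-\ell$ collapses to a single value, giving exactly one copy of $\mathfrak{h}(F)(-2)$ together with $\mathfrak{h}(P_Y)$, and the correspondence for the $F$ piece is (up to normalization) $i_*\circ\pi^*$.

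Second, since $u\colon P_Y=\PP(\tangent_{\PP^{n+1}}|_Y)\to Y$ is a $\PP^n$\dash bundle, the projective bundle formula for Chow motives with integral coefficients (see \cite{MR3052734}) yields
\begin{equation}
  \mathfrak{h}(P_Y)\cong\bigoplus_{i=0}^{n}\mathfrak{h}(Y)(-i),
\end{equation}
with the $i$\dash th component realized by the correspondence~$\cdot c_1(\mathcal{O}_u(1))^i\circ u^*$ on one side and~$u_*$ on the other. Combining this with the previous step, the summand~$\mathfrak{h}(X')=\mathfrak{h}(P_Y)$ decomposes as~$\bigoplus_{i=0}^n\mathfrak{h}(Y)(-i)$, with the contribution of each~$\mathfrak{h}(Y)(-i)$ to~$\mathfrak{h}(Y^{[2]})$ given by~$\tau_*\circ\tau'^*\circ(\cdot c_1(\mathcal{O}_u(1))^i)\circ u^*$, matching the formula in the statement.

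The only step requiring genuine care is bookkeeping: I would verify that the Tate twists and the explicit correspondences stated in \cite[theorem 3.4]{1910.06730v1} match those in the conclusion of the corollary (in particular the twist~$(-2)$ on the~$\mathfrak{h}(F)$ summand, which matches the fact that the functor~$\Phi_{-1}$ appears in \cref{thm:cubic} and the codimension of $P_2$ in $\hilbtwo[Y]$ is~$2$), and that the resulting map is an isomorphism (not merely a direct summand decomposition) by comparing with the derived category statement or by directly checking that the projectors are complementary. The main, and indeed the only nontrivial, obstacle is therefore internal consistency with Jiang's conventions; the mathematical content is entirely contained in \cite[theorem 3.4]{1910.06730v1} applied to \cref{corollary:quadratic-fano-standard-flip} combined with the motivic projective bundle formula.
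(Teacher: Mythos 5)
Your proposal is correct and follows essentially the same route as the paper: the paper's proof is a one-line combination of \cref{corollary:quadratic-fano-standard-flip} with Jiang's motivic decomposition for standard flips (\cite[corollary 3.8]{1910.06730v1}, which packages the flip formula of \cite[theorem 3.4]{1910.06730v1} together with the projective bundle splitting of~$\mathfrak{h}(P_Y)$ that you carry out by hand). Your extra bookkeeping of the Tate twists ($k=2$, $\ell=1$, codimension~$2$, hence the single summand~$\mathfrak{h}(F)(-2)$) is exactly the verification implicit in the paper's citation.
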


\begin{proof}
  It suffices to combine \cref{corollary:quadratic-fano-standard-flip} with \cite[corollary 3.8]{1910.06730v1}.
\end{proof}

\section{Hilbert squares of low-degree hypersurfaces}
\label{section:hilbert-squares}
Let~$X$ be a smooth projective variety of dimension~$n=\dim X$. Then its Hilbert square (which is short-hand for the Hilbert scheme of length-2 subschemes) $\hilbtwo$ is a smooth projective variety of dimension~$2n$ \cite[theorem~3.0.1]{MR1616606}. Throughout we will assume that~$n\geq 2$, with the main focus being on~$n\geq 3$. For the benefit of the reader we will recall its relevant properties.

It has the following explicit description:
\begin{equation}
  \begin{tikzcd}
    \hilbtwo \\
    \Bl_\Delta(X\times X) \arrow[u, "q"] \arrow[d, swap, "\tau"] & E \arrow[d] \arrow[l, hook']  \arrow[dl, phantom, "\square"] \\
    X\times X & X \arrow[l, hook', "\Delta"]
  \end{tikzcd}
  \label{equation:double-cover-notation}
\end{equation}
where the square is the cartesian blowup square, and~$q$ is the quotient by the~$\mathbb{Z}/2\mathbb{Z}$\dash action on~$\Bl_\Delta(X\times X)$.

We can consider~$q$ as a double cover, and we need to describe it more explicitly. We have that
% Is this an isomorphism or an inclusion of the right in the left? For surfaces it's only an inclusion I think, with a role to be played by H^1(S,O_S)? In our Hilbert square paper we claim it's an isomorphism though. Is it maybe for higher-dimensional varieties?
\begin{equation}
  \Pic(\hilbtwo)\cong\Pic(X)\oplus\mathbb{Z}[\delta]
\end{equation}
for the divisor~$\delta$ which is half of the divisor~$D$ on~$\hilbtwo$ parametrising non-reduced length-2 subschemes. The double cover is branched along~$D$, and we have the following picture
\begin{equation}
  \begin{tikzcd}
    E \arrow[r, hook] \arrow[rd, swap, "\cong"] & 2E \arrow[d] \arrow[r, hook] \arrow[dr, phantom, "\square"] & \Bl_\Delta(X\times X) \arrow[d, "q"] \\
    & D \arrow[r, hook] & \hilbtwo
  \end{tikzcd}
\end{equation}
where~$2E$ is the second infinitesimal neighbourhood of the exceptional divisor~$E$.

By \cite[lemma~I.17.1]{MR2030225} we have that the description as a double cover gives the identifications
% in the notation of op.~cit. we have L=O(\delta) and B_1=E
\begin{align}
  \omega_{\Bl_\Delta(X\times X)}&\cong q^*\left( \omega_{\hilbtwo}\otimes\mathcal{O}_{\hilbtwo}(\delta) \right) \label{equation:double-cover-canonical} \\
  q^*(\mathcal{O}_{\hilbtwo}(\delta))&\cong\mathcal{O}_{\Bl_\Delta(X\times X)}(E) \label{equation:double-cover-divisor}
\end{align}
whilst the description as a blowup gives the identification
\begin{equation}
  \label{equation:blowup-canonical}
  \omega_{\Bl_\Delta(X\times X)}\cong\tau^*(\omega_{X\times X})\otimes\mathcal{O}_{\Bl_\Delta(X\times X)}((n-1)E),
\end{equation}
so that
\begin{equation}
  q^*(\omega_{\hilbtwo})\cong\tau^*(\omega_{X\times X})\otimes\mathcal{O}_{\Bl_\Delta(X\times X)}((n-2)E).
\end{equation}

The Hilbert square is a resolution of singularities, via the Hilbert--Chow morphism $\hilbtwo\to\Sym^2X$, sending a length-2 subscheme to the cycle it is supported on. The resolution is crepant if~$n=2$, but fails to be crepant for~$n\geq 3$. To see this, observe that the Hilbert--Chow morphism fits in the commutative diagram
\begin{equation}
  \begin{tikzcd}
    \Bl_\Delta(X\times X) \arrow[r, "\tau"] \arrow[d, swap,"q"] & X\times X \arrow[d] \\
    \hilbtwo \arrow[r] & \Sym^2X
  \end{tikzcd}
\end{equation}
which allows us to compare the canonical bundles. We conclude that it is crepant if and only if the multiplicity of the contribution of~$\delta$ in the double cover agrees with that of the exceptional divisor in the blowup, which happens if and only if~$n=2$.

We will now use this description of the Hilbert square to prove that in some cases the Hilbert square of a Fano variety is again Fano. This possibility was suggested in \cite{315322}, and later used in \cite[lemma~2.9]{2001.06078v1}, for~$X=\mathbb{P}^n$ where~$n\geq 3$. We will amplify this result to also cover low-degree hypersurfaces.

\subsection{Projective space}
We will recall the setup and proof of \cite[lemma~2.9]{2001.06078v1}. We will use the notation of \eqref{equation:double-cover-notation} for~$X=\mathbb{P}^n$. Consider the morphism
\begin{equation}
  \varphi\colon\Bl_\Delta(\mathbb{P}^n\times\mathbb{P}^n)\to\mathbb{P}^n\times\mathbb{P}^n\times\Gr(2,n+1)
\end{equation}
which is the product of the blowup~$\tau$ and the morphism~$\Bl_\Delta(\mathbb{P}^n\times\mathbb{P}^n)\to\Gr(2,n+1)$ sending a point on the blowup to the line in~$\mathbb{P}^n$ spanned by two distinct points or a point together with a tangent direction, which gives a point on the Grassmannian~$\Gr(2,n+1)$. It is quasi-finite by construction (its fibres are either empty or singletons), hence by \cite[\href{https://stacks.math.columbia.edu/tag/02LS}{tag 02LS}]{stacks} it is actually finite (and a computation on the tangent spaces shows it is in fact a closed immersion, but we will not need this).

We have that~$\Pic(\mathbb{P}^n\times\mathbb{P}^n\times\Gr(2,n+1))\cong\mathbb{Z}^{\oplus3}$ and we will denote
\begin{equation}
  \mathcal{O}_{\mathbb{P}^n\times\mathbb{P}^n\times\Gr(2,n+1)}(i,j,k)
  \coloneqq
  \mathcal{O}_{\mathbb{P}^n}(i)\boxtimes\mathcal{O}_{\mathbb{P}^n}(j)\boxtimes\mathcal{O}_{\Gr(2,n+1)}(k),
\end{equation}
here~$\mathcal{O}_{\Gr(2,n+1)}(1)$ is the Pl\"ucker line bundle. The line bundles~$\mathcal{O}_{\mathbb{P}^n\times\mathbb{P}^n\times\Gr(2,n+1)}(i,j,k)$ are ample if and only if~$i,j,k\geq 1$, and because~$\varphi$ is finite (so in particular affine) ampleness is preserved under pullback.

We have the identifications
\begin{equation}
  \label{equation:identifications-varphi}
  \begin{aligned}
    \varphi^*(\mathcal{O}_{\mathbb{P}^n\times\mathbb{P}^n\times\Gr(2,n+1)}(1,1,0))
    &\cong\tau^*\left( \mathcal{O}_{\mathbb{P}^n\times\mathbb{P}^n}(1,1) \right) \\
    \varphi^*(\mathcal{O}_{\mathbb{P}^n\times\mathbb{P}^n\times\Gr(2,n+1)}(0,0,1))
    &\cong\tau^*\left( \mathcal{O}_{\mathbb{P}^n\times\mathbb{P}^n}(1,1) \right)\otimes\mathcal{O}_{\Bl_\Delta(\mathbb{P}^n\times\mathbb{P}^n)}(-E),
  \end{aligned}
\end{equation}
where the second isomorphism is obtained by considering Pl\"ucker coordinates in terms of bilinear forms (so as sections of~$\mathcal{O}_{\mathbb{P}^n\times\mathbb{P}^n}(1,1)$) which are non-vanishing on the diagonal. We then obtain the following proposition, which is also \cite[lemma~2.9]{2001.06078v1}.

\begin{proposition}[(Sawin)]
  \label{proposition:sawin}
  Let~$n\geq 3$. Then~$\hilbtwoP$ is Fano.
\end{proposition}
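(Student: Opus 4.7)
The plan is to verify that $\omega_{\hilbtwoP}^{-1}$ is ample by pulling back via the double cover $q\colon\Bl_\Delta(\mathbb{P}^n\times\mathbb{P}^n)\to\hilbtwoP$ and recognising the resulting line bundle as the restriction of an obviously ample line bundle on $\mathbb{P}^n\times\mathbb{P}^n\times\Gr(2,n+1)$ via the closed immersion~$\varphi$. Since $q$ is finite and surjective, $\omega_{\hilbtwoP}^{-1}$ will be ample as soon as $q^*\omega_{\hilbtwoP}^{-1}$ is ample, by the standard fact that ampleness descends along finite surjective morphisms.

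First I would compute $q^*\omega_{\hilbtwoP}$ in two ways. Combining \eqref{equation:double-cover-canonical} and \eqref{equation:double-cover-divisor} gives
\begin{equation}
  q^*\omega_{\hilbtwoP} \cong \omega_{\Bl_\Delta(\mathbb{P}^n\times\mathbb{P}^n)}\otimes\mathcal{O}(-E),
\end{equation}
while \eqref{equation:blowup-canonical} (with $\omega_{\mathbb{P}^n\times\mathbb{P}^n}\cong\mathcal{O}(-n-1,-n-1)$) yields
\begin{equation}
  \omega_{\Bl_\Delta(\mathbb{P}^n\times\mathbb{P}^n)} \cong \tau^*\mathcal{O}_{\mathbb{P}^n\times\mathbb{P}^n}(-n-1,-n-1)\otimes\mathcal{O}((n-1)E).
\end{equation}
Dualising and combining, I get
\begin{equation}
  q^*\omega_{\hilbtwoP}^{-1} \cong \tau^*\mathcal{O}_{\mathbb{P}^n\times\mathbb{P}^n}(n+1,n+1)\otimes\mathcal{O}(-(n-2)E).
\end{equation}

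Next I would translate this into a pullback along $\varphi$. From the identifications~\eqref{equation:identifications-varphi} we can read off
\begin{equation}
  \mathcal{O}(-E) \cong \varphi^*\mathcal{O}_{\mathbb{P}^n\times\mathbb{P}^n\times\Gr(2,n+1)}(-1,-1,1), \qquad \tau^*\mathcal{O}(1,1) \cong \varphi^*\mathcal{O}(1,1,0),
\end{equation}
so the previous formula becomes
\begin{equation}
  q^*\omega_{\hilbtwoP}^{-1}
  \cong
  \varphi^*\mathcal{O}_{\mathbb{P}^n\times\mathbb{P}^n\times\Gr(2,n+1)}(3,3,n-2).
\end{equation}

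For $n\geq 3$ all three weights are $\geq 1$, so the line bundle on the triple product is ample, hence its restriction to the closed subvariety $\Bl_\Delta(\mathbb{P}^n\times\mathbb{P}^n)$ along the closed immersion $\varphi$ is ample as well. Finally, since $q$ is a finite surjective morphism, ampleness of $q^*\omega_{\hilbtwoP}^{-1}$ forces ampleness of $\omega_{\hilbtwoP}^{-1}$, so $\hilbtwoP$ is Fano. The only real content is the bookkeeping of the two different descriptions of the canonical bundle; no step is genuinely hard, and the reason the argument needs $n\geq 3$ is precisely the positivity of the Grassmannian weight $n-2$.
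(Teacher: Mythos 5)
Your proposal is correct and follows essentially the same route as the paper: compute $q^*\omega_{\hilbtwoP}^{\vee}$ via the double-cover and blowup descriptions of the canonical bundle, identify it as $\varphi^*\mathcal{O}_{\mathbb{P}^n\times\mathbb{P}^n\times\Gr(2,n+1)}(3,3,n-2)$, and descend ampleness along the finite surjective morphism $q$. The bookkeeping and the role of $n\geq 3$ (positivity of the Grassmannian weight $n-2$) match the paper's argument exactly.
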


\begin{proof}
  By \eqref{equation:double-cover-canonical} and \eqref{equation:double-cover-divisor} we have the identification
  \begin{equation}
    \begin{aligned}
      &q^*(\omega_{\hilbtwoP}^\vee) \\
      &\quad\cong\omega_{\Bl_\Delta(\mathbb{P}^n\times\mathbb{P}^n)}^\vee\otimes q^*(\mathcal{O}_{\hilbtwoP}(\delta)) \\
      &\quad\cong\omega_{\Bl_\Delta(\mathbb{P}^n\times\mathbb{P}^n)}^\vee\otimes\mathcal{O}_{\Bl_\Delta(\mathbb{P}^n\times\mathbb{P}^n)}(E).
    \end{aligned}
  \end{equation}
  By \eqref{equation:blowup-canonical} we can further rewrite this as
  \begin{equation}
    \quad\cong\tau^*\left( \mathcal{O}_{\mathbb{P}^n\times\mathbb{P}^n}(n+1,n+1) \right)\otimes\mathcal{O}_{\Bl_\Delta(\mathbb{P}^n\times\mathbb{P}^n)}((2-n)E).
  \end{equation}
  But using \eqref{equation:identifications-varphi} we can realise this line bundle as
  \begin{equation}
    \quad\cong\varphi^*\left( \mathcal{O}_{\mathbb{P}^n\times\mathbb{P}^n\times\Gr(2,n+1)}(3,3,n-2) \right)
  \end{equation}
  and hence it is ample when~$n\geq 3$.
  % we can't conclude very ample, unless we show it's a closed immersion, by https://ayoucis.wordpress.com/2014/11/11/weird-example-pullback-of-very-ample-by-finite-is-not-very-ample/

  To conclude, by \cite[\href{https://stacks.math.columbia.edu/tag/0B5V}{tag 0B5V}]{stacks} we know that~$\omega_{\hilbtwoP}^\vee$ is ample if and only if~$q^*(\omega_{\hilbtwoP}^\vee)$ is ample, and we are done.
\end{proof}

\subsection{Quadrics and cubic hypersurfaces}
We will now bootstrap from the result in \cref{proposition:sawin} to show that low-degree hypersurfaces again give rise to Hilbert squares which are Fano.

We will extend the notation of \eqref{equation:double-cover-notation} for~$X\hookrightarrow\mathbb{P}^{n+1}$ a hypersurface of degree~$d=2,3$ by decorating the morphisms involving~$\mathbb{P}^{n+1}$. By functoriality of the Hibert scheme construction we obtain the following diagram.
\begin{equation}
  \begin{tikzcd}
    & \hilbtwo \arrow[r, hook] & \hilbtwoP[n+1] \\
    E \arrow[r, hook] \arrow[d] & \Bl_\Delta(X\times X) \arrow[r, hook, "\imath"] \arrow[u, "q"] \arrow[d, swap, "\tau"] & \Bl_\Delta(\mathbb{P}^{n+1}\times\mathbb{P}^{n+1}) \arrow[u, swap, "q'"] \arrow[d, "\tau'"] & E' \arrow[l, swap, hook'] \arrow[d] \\
    X\arrow[r, hook, "\Delta"] & X\times X \arrow[r, hook] & \mathbb{P}^{n+1}\times\mathbb{P}^{n+1} & \mathbb{P}^{n+1} \arrow[l, hook', swap, "\Delta'"]
  \end{tikzcd}.
\end{equation}

\begin{proposition}
  \label{proposition:fano-hilbert-square-hypersurface}
  Let~$n\geq 3$ and~$d=1,2,3$. Let~$X\subseteq\mathbb{P}^{n+1}$ be a hypersurface of degree~$d$. Then~$\hilbtwo$ is Fano.
\end{proposition}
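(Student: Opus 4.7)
My plan is to bootstrap directly from \cref{proposition:sawin} (applied to~$\mathbb{P}^{n+1}$ in place of~$\mathbb{P}^n$) by pulling back the key ampleness computation along the closed immersion~$\imath\colon \Bl_\Delta(X\times X)\hookrightarrow\Bl_\Delta(\mathbb{P}^{n+1}\times\mathbb{P}^{n+1})$. The starting point is to compute~$q^*(\omega_{\hilbtwo}^\vee)$: combining the double-cover formula \eqref{equation:double-cover-canonical} and the divisor identification \eqref{equation:double-cover-divisor}, together with the blowup formula \eqref{equation:blowup-canonical} and the adjunction-type identity~$\omega_X\cong\mathcal{O}_X(d-n-2)$, one finds
\begin{equation}
  q^*(\omega_{\hilbtwo}^\vee)
  \cong
  \tau^*\!\left(\mathcal{O}_{X\times X}(n+2-d,\,n+2-d)\right)
  \otimes
  \mathcal{O}_{\Bl_\Delta(X\times X)}((2-n)E).
\end{equation}

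Next, I would observe that under the closed immersion~$\imath$ we have~$\imath^*\tau'^*\mathcal{O}_{\mathbb{P}^{n+1}\times\mathbb{P}^{n+1}}(a,a)\cong\tau^*\mathcal{O}_{X\times X}(a,a)$ and~$\imath^*\mathcal{O}_{\Bl_\Delta(\mathbb{P}^{n+1}\times\mathbb{P}^{n+1})}(E')\cong\mathcal{O}_{\Bl_\Delta(X\times X)}(E)$ (the latter because~$E'$ pulls back to the exceptional divisor of the blow-up along~$\Delta\subset X\times X$, namely~$E$). Hence the line bundle just computed is the~$\imath$-pullback of its analogue on~$\Bl_\Delta(\mathbb{P}^{n+1}\times\mathbb{P}^{n+1})$.

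I would then feed this into the Sawin calculation applied to~$\mathbb{P}^{n+1}$. Writing~$\varphi'\colon\Bl_\Delta(\mathbb{P}^{n+1}\times\mathbb{P}^{n+1})\to\mathbb{P}^{n+1}\times\mathbb{P}^{n+1}\times\Gr(2,n+2)$ for the closed immersion of \cref{proposition:sawin}, the identifications \eqref{equation:identifications-varphi} (with~$n$ replaced by~$n+1$) give
\begin{equation}
  \tau'^*\mathcal{O}(n+2-d,\,n+2-d)\otimes\mathcal{O}((2-n)E')
  \cong
  \varphi'^*\mathcal{O}_{\mathbb{P}^{n+1}\times\mathbb{P}^{n+1}\times\Gr(2,n+2)}(4-d,\,4-d,\,n-2).
\end{equation}
For~$d\in\{1,2,3\}$ and~$n\geq 3$, all three weights~$(4-d,4-d,n-2)$ are strictly positive, so the line bundle on the product is ample. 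Since~$\varphi'$ and~$\imath$ are both closed immersions (hence affine), pulling back preserves ampleness, so~$q^*(\omega_{\hilbtwo}^\vee)$ is ample on~$\Bl_\Delta(X\times X)$. As in the end of the proof of \cref{proposition:sawin}, the Stacks project tag~\href{https://stacks.math.columbia.edu/tag/0B5V}{0B5V} lets me descend ampleness across the finite double cover~$q$, concluding that~$\hilbtwo$ is Fano.

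The only subtlety I anticipate is the bookkeeping for the second identification in \eqref{equation:identifications-varphi}: one must check that for a hypersurface~$X$ the restriction of~$E'$ really is the full exceptional divisor of~$\tau$, rather than a non-reduced thickening. This is fine because~$X$ is smooth, so~$\Delta_X\subset X\times X$ is a smooth subvariety of the expected codimension and the blowup behaves compatibly under restriction along the regular immersion~$X\times X\hookrightarrow\mathbb{P}^{n+1}\times\mathbb{P}^{n+1}$; beyond that, everything is a routine numerical check.
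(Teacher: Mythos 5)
Your proposal is correct and follows essentially the same route as the paper's own proof: the identical computation of~$q^*(\omega_{\hilbtwo}^\vee)$ via \eqref{equation:double-cover-canonical}, \eqref{equation:double-cover-divisor}, \eqref{equation:blowup-canonical} and adjunction, the identification of~$E$ as the restriction of~$E'$ (the paper cites the blowup closure lemma for this), realisation of the resulting line bundle as~$\imath^*\circ\varphi^*\mathcal{O}_{\mathbb{P}^{n+1}\times\mathbb{P}^{n+1}\times\Gr(2,n+2)}(4-d,4-d,n-2)$, and descent of ampleness along the finite morphisms~$\varphi\circ\imath$ and~$q$. The numerical bookkeeping checks out.
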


\begin{proof}
  We will write~$\mathcal{O}_{X\times X}(i,j)\coloneqq\mathcal{O}_X(i)\boxtimes\mathcal{O}_X(j)$. By the adjunction formula we have~$\omega_X\cong\mathcal{O}_X(-n-2+d)$. As in the proof of \cref{proposition:sawin}, using \eqref{equation:double-cover-canonical}, \eqref{equation:double-cover-divisor} and \eqref{equation:blowup-canonical} we have the isomorphism
  \begin{equation}
    q^*\left( \omega_{\hilbtwo} \right)
    \cong
    \tau^*(\mathcal{O}_{X\times X}(-n-2+d,-n-2+d))\otimes\mathcal{O}_{\Bl_\Delta(X\times X)}((n-2)E).
  \end{equation}
  Now, by the blowup closure lemma from \cite[lemma~22.2.6]{vakil} we get that the exceptional divisor~$E$ of~$\Bl_\Delta(X\times X)$ is the restriction of the exceptional divisor~$E'$ on~$\Bl_{\Delta'}(\mathbb{P}^{n+1}\times\mathbb{P}^{n+1})$, so we have an isomorphism
  \begin{equation}
    \begin{aligned}
      &\tau^*(\mathcal{O}_{X\times X}(n-2,n-2))\otimes\mathcal{O}_{\Bl_\Delta(X\times X)}((2-n)E) \\
      &\quad\cong\imath^*\left( \tau'^*(\mathcal{O}_{\mathbb{P}^{n+1}\times\mathbb{P}^{n+1}}(n-2,n-2))\otimes\mathcal{O}_{\Bl_{\Delta'}(\mathbb{P}^{n+1}\times\mathbb{P}^{n+1})}((2-n)E') \right).
    \end{aligned}
  \end{equation}
  Next we consider the composition of the morphism~$\varphi$ (now for~$\mathbb{P}^{n+1}$) with the inclusion~$\imath$, which allows us to further rewrite this line bundle as
  \begin{equation}
    \quad\cong\imath^*\circ\varphi^*\left( \mathcal{O}_{\mathbb{P}^{n+1}\times\mathbb{P}^{n+1}\times\Gr(2,n+2)}(0,0,n-2) \right).
  \end{equation}
  Combining this with the isomorphism
  \begin{equation}
    \tau^*\left( \mathcal{O}_{X\times X}(4-d,4-d) \right)\cong\imath^*\circ\varphi^*\left( \mathcal{O}_{\mathbb{P}^{n+1}\times\mathbb{P}^{n+1}\times\Gr(2,n+2)}(4-d,4-d,0) \right)
  \end{equation}
  we obtain after dualising the isomorphism
  \begin{equation}
    q^*\left( \omega_{\hilbtwo}^\vee \right)
    \cong
    \imath^*\circ\varphi^*\left( \mathcal{O}_{\mathbb{P}^{n+1}\times\mathbb{P}^{n+1}\times\Gr(2,n+2)}(4-d,4-d,n-2) \right).
  \end{equation}
  Hence we see that~$q^*(\omega_{\hilbtwo}^\vee)$ is ample as soon as~$d\leq 3$ and~$n\geq 3$, because~$\varphi\circ\imath$ is a again finite morphism.

  We can now conclude as in the proof of \cref{proposition:sawin}, as by \cite[\href{https://stacks.math.columbia.edu/tag/0B5V}{tag 0B5V}]{stacks} we know that~$\omega_{\hilbtwo}^\vee$ is ample if and only if~$q^*(\omega_{\hilbtwo}^\vee)$ is ample.
\end{proof}

\begin{remark}
  The Hilbert square of a quartic hypersurface~$X$ in~$\mathbb{P}^{n+1}$ is \emph{not} Fano. By inspecting the proof, we see that the anticanonical bundle on~$\hilbtwo$ is the pullback of the~$(n-2)$th power of the Pl\"ucker polarisation for~$\Gr(2,n+2)$ under a morphism which is generically finite (of degree~6) but not finite itself. Hence~$\hilbtwo$ is only weak Fano in this case.
\end{remark}

\begin{remark}[(Geometric properties of the Hilbert squares)]
  \label{remark:geometric-properties}
%  Still need to discuss:\fixthis
%  \begin{enumerate}
%    \item index
%    \item Picard rank
%  \end{enumerate}
%  Can we compute:\checkthis
%  \begin{enumerate}
%    \item can we play the two-ray game? for $\mathbb{P}^n$ it's the projective bundle over the Grassmannian
%    \item dimension of global sections of anticanonical?
%    \item is the anticanonical bundle very ample?
%    \item self-intersection of the canonical divisor?
%    \item rationality: the components in the semiorthogonal decomposition for the Hilbert square of the cubic fourfold are geometrically representable in dimension 4, so it would be rational, independent of $X$? what happens in other cases where rationality of the cubic is known?
%  \end{enumerate}

  By \cite[theorem~C]{MR3950704} we can understand the deformation theory of these Fano Hilbert squares. We have that~$\mathbb{P}^n$ and~$Q^n$ are rigid, and therefore so are~$\hilbtwo[\mathbb{P}^n]$ and~$\hilbtwo[Q^n]$. Cubic hypersurfaces of dimension~$n$ on the other hand come in a family of dimension~$\binom{n+2}{3}$, and so do their Hilbert squares.

  Finally, by \cite[theorem~4]{aut-hilbnS} we have that the automorphism group of~$\hilbtwo[\mathbb{P}^n]$ is isomorphic to that~$\mathbb{P}^n$, i.e.~is given by~$\PGL_{n+1}$. It would be interesting to understand the automorphism groups of Hilbert squares (and especially those which are again Fano) more generally.
\end{remark}

\subsection{Application: the Fano variety of lines is a Fano visitor}
\label{subsection:fano-visitor}
As discussed in the introduction, the \emph{Fano visitor problem} for a smooth projective variety~$X$ asks for the construction of a fully faithful functor~$\derived^\bounded(X)\hookrightarrow\derived^\bounded(Y)$ into the derived category of a smooth projective Fano variety~$Y$. In this case we call~$X$ a \emph{Fano visitor} and~$Y$ the \emph{Fano host}. We will study this for the Fano variety of lines on a cubic hypersurface.

Now let~$Y\subset\mathbb{P}^{n+1}$ be a smooth cubic hypersurface. By \cref{thm:cubic} we have that~$\derived^\bounded(\fano(Y))$ is an admissible subcategory of~$\derived^\bounded(\hilbtwo[Y])$, whilst~$\hilbtwo[Y]$ is Fano if~$n\geq 3$ by \cref{theorem:fano}. This proves \cref{corollary:fano-visitor}. In fact, if~$n\geq 5$ then~$\fano(Y)$ is itself a Fano variety, which is why we are only interested in~$n=3,4$ for the Fano visitor problem \cite[proposition~1.8]{MR0569043}.

\paragraph{Cubic threefolds}\quad
If~$n=3$, then~$\fano(Y)$ is a smooth projective surface of general type \cite[proposition~1.21]{MR0569043}, whose Hodge diamond is
\begin{equation}
  \begin{array}{cccccc}
    & & 1 \\
    & 5 & & 5 \\
    10 & & 25 & & 10 \\
    & 5 & & 5 \\
    & & 1
  \end{array}.
\end{equation}
As far as we know this is the first construction of a Fano host for a surface of general type which is not a complete intersection or a product of curves.

Observe that~$\omega_{\fano(Y)}\cong\mathcal{O}_{\fano(Y)}(1)$ is a very ample line bundle (as it is the restriction of~$\mathcal{O}_{\mathbb{P}^{\binom{n+2}{2}}-1}(1)$ along the closed immersions~$\fano(Y)\hookrightarrow\Gr(2,n+2)\hookrightarrow\mathbb{P}^{\binom{n+2}{2}-1}$), hence by \cite[corollary~1.5]{1508.00682v2} we have that~$\derived^\bounded(\fano(Y))$ is indecomposable.

\paragraph{Cubic fourfolds}\quad
If~$n=4$, then the Fano variety of lines on a cubic fourfold is a~4\dash dimensional hyperk\"ahler variety, deformation equivalent to the Hilbert square of a K3 surface \cite{MR0818549}. Hence we have constructed Fano hosts for the complete~20\dash dimensional family of hyperk\"ahlers arising from Fano varieties for cubic fourfolds. As~$\derived^\bounded(\fano(Y))$ is Calabi--Yau, it is indecomposable.

It is still an interesting question to find a Fano host for other~20\dash dimensional families of hyperk\"ahler~4\dash folds deformation equivalent to the Hilbert square of a~K3~surface.

\paragraph{Higher dimensions}\quad
If~$n\geq 5$, then the Fano variety of lines is itself a Fano variety, and the interesting question is to find a natural semiorthogonal decomposition for it, rather than embed it in the derived categoryof a Fano variety. We do not address this here.

\paragraph{Cubic surfaces}\quad
If~$n=2$, then~$\hilbtwo[Y]$ is not a Fano variety, because the Hilbert--Chow morphism is a crepant resolution of singularities of~$\Sym^2Y$, but it is log Fano \cite[corollary~3]{MR3114922}. \Cref{thm:cubic} still applies though, and we obtain a semiorthogonal decomposition
\begin{equation}
  \derived^\bounded(\hilbtwo[Y])
  =\langle\derived^\bounded(\fano(Y)),\derived^\bounded(Y),\derived^\bounded(Y),\derived^\bounded(Y)\rangle,
\end{equation}
which can be refined into a full exceptional collection of length~54.

As~$\fano(Y)$ consists of~27~points, we obtain~27 completely orthogonal objects in~$\derived^\bounded(\hilbtwo[Y])$. These are the structure sheaves of the~$\mathbb{P}^2$'s embedded in~$\hilbtwo[Y]$ which parametrise~2~points on each of the~27~lines. Alternatively, consider the natural morphism~$\hilbtwo[Y]\to\Gr(2,4)$ sending~2~points to the line they span in~$\mathbb{P}^3$. This morphism is generically finite of degree~$\binom{3}{2}=3$ onto its image, and the~27~completely orthogonal objects correspond to the locus where the morphism is not finite.

%The remainder of the category is built up of~$3$ copies of~$\derived^\bounded(Y)$, which consists of~9~exceptional objects each.

\newpage

\appendix

\section{Short proof of \texorpdfstring{\cref{theorem:flip}\ref{enumerate:part-2}}{Theorem A(ii)} for \texorpdfstring{$\ell=1$}{l=1}}
\label{app:l1}
In this appendix we give a short proof of \cref{theorem:flip}\ref{enumerate:part-2} when~$\ell=1$. This proof has the interesting property that it compares the category~$\derived^\bounded(X)$ to the components of \eqref{eqn:goal} as subcategories in~$\derived^\bounded(\tilde X)$, and not only after applying~$\RRR\tau_*$.

Assume we are given a standard flip diagram \eqref{equation:flip} with $\ell=1$. We recall some of the notation introduced in section~\ref{section:standard-flip}. Let~$a,b\in\mathbb{Z}$, then we denote by~$\mathcal{O}(a,b)\coloneqq p^*\mathcal{O}_\pi(a)\otimes p^{\prime*}\mathcal{O}_{\pi'}(b)$, which is a line bundle on~$E$. We will consider the following triangulated subcategory (with reasoning as for \eqref{equation:subcategory}) of~$\derived^\bounded(\tilde X)$:
\begin{equation}
  \begin{aligned}
    \mathcal{A}(a,b)
    &\coloneqq j_*(p^*\circ\pi^*(\derived^\bounded(F)\otimes\mathcal{O}(a,b)) \\
    &=j_*(p^{\prime*}\circ\pi^{\prime*}(\derived^\bounded(F)\otimes\mathcal{O}(a,b)).
  \end{aligned}
\end{equation}

As~$\tilde X$ is the blowup of~$X$ along~$Z$ (which has codimension~2 by assumption), Orlov's blowup formula \cite[theorem~4.3]{MR1208153} gives a semiorthogonal decomposition
\begin{equation}
  \label{eqn:BlowupZX}
  \derived^\bounded(\tilde X)
  =\langle j_*(p^*(\derived^\bounded(Z))\otimes\mathcal{O}_E(E)),\LLL\tau^*\derived^\bounded(X)\rangle.
\end{equation}
Using the fact that~$Z$ is a~$\PP^k$-bundle over~$F$ by \eqref{equation:flip}, the first component in \eqref{eqn:BlowupZX} can be further decomposed using Orlov's projective bundle formula \cite[theorem~2.6]{MR1208153} using our notation as
\begin{equation}
  \label{eq:sod-pbundle}
  j_*(p^*(\derived^\bounded(Z))\otimes\mathcal{O}_E(E))
  =\langle \mathcal{A}(-k,-1),\mathcal{A}(-k+1,-1),\ldots,\mathcal{A}(0,-1)\rangle.
\end{equation}
Combining \eqref{eqn:BlowupZX} with \eqref{eq:sod-pbundle}, we have the following semiorthogonal decomposition:
\begin{equation}
  \label{eqn:Blowup1}
  \derived^\bounded(\tilde X)
  =\langle\mathcal{A}(-k,-1),\mathcal{A}(-k+1,-1),\ldots,\mathcal{A}(0,-1),\LLL\tau^*\derived^\bounded(X)\rangle.
\end{equation}
Here all the components except the last one are equivalent to $\derived^\bounded(F)$ via the defining functors.

Similarly, using the fact that $\tilde X$ is also the blowup of $X'$ along $Z'$ and that $Z'$ is a $\PP^{1}$-bundle over $F$, we get a second semiorthogonal decomposition:
\begin{equation}
  \label{eqn:Blowup2}
  \begin{aligned}
    \derived^\bounded(\tilde X)&=
    \langle
      \mathcal{A}(-k, -1),
      \mathcal{A}(-k, 0),
      \mathcal{A}(-k+1,-1),
      \mathcal{A}(-k+1, 0),
      \dots,
      \mathcal{A}(-1, -1),
      \mathcal{A}(-1, 0), \\
      &\qquad\qquad\LLL\tau'^{*}\derived^\bounded(X')
    \rangle.
  \end{aligned}
\end{equation}
Again all the components of \eqref{eqn:Blowup2} except the last one are equivalent to $\derived^\bounded(F)$.

To compare \eqref{eqn:Blowup1} and \eqref{eqn:Blowup2}, we perform a sequence of mutations on \eqref{eqn:Blowup2}.

First we mutate~$\mathcal{A}(-k,0)$ to the left.
%Because we are doing this in the derived category of a~$\mathbb{P}^1$\dash bundle the desired mutation can be computed by applying the functor\checkthis{PB: not sure what this sentence is precisely meant to convey, leaving it for now (Lie) Actually no need, because the first two components are from Orlov's projective bundle formula and we are free to choose the two consecutive numbers: let us just choose -2 and -1. }
%\begin{equation}
%  j_{*}(p^*\pi^*\derived^\bounded(F) \otimes p^{*}\mathcal{O}_{\pi}(-k) \otimes p'^{*}(-)).
%\end{equation}
The resulting semiorthogonal decomposition is
\begin{equation}
  \begin{aligned}
    \derived^\bounded(\tilde X)&=
    \langle
      \mathcal{A}(-k,-2),
      \mathcal{A}(-k,-1),
      \mathcal{A}(-k+1,-1),
      \mathcal{A}(-k+1,0),
      \ldots,\mathcal{A}(-1,-1),
      \mathcal{A}(-1,0), \\
      &\qquad\qquad\LLL\tau^{\prime*}(\derived^\bounded(X'))
    \rangle.
  \end{aligned}
\end{equation}
Next we mutate $\mathcal{A}(-k,-2)$ to the far right. By \cite[proposition~3.6]{MR1039961}, the resulting decomposition is
\begin{equation}
  \begin{aligned}
    \derived^\bounded(\tilde X)&=
    \langle
      \mathcal{A}(-k,-1),
      \mathcal{A}(-k+1,-1),
      \mathcal{A}(-k+1,0),
      \ldots,
      \mathcal{A}(-1,-1),
      \mathcal{A}(-1,0), \\
      &\qquad\qquad\LLL\tau^{\prime*}\derived^\bounded(X'),
      \serre_{\tilde X}^{-1}\mathcal{A}(-k,-2)
    \rangle,
  \end{aligned}
\end{equation}
where~$\serre_{\tilde X}=-\otimes\omega_{\tilde X}[\dim\tilde X]$ is the Serre functor of~$\derived^\bounded(\tilde X)$. We can easily compute~$\serre_{\tilde X}^{-1}\mathcal{A}(-k,-2)$ as follows
\begin{equation}
  \begin{aligned}
    \serre_{\tilde X}^{-1}\mathcal{A}(-k,-2)
    &=\mathcal{A}(-k, -2)\otimes \omega_{\tilde X}^{\vee}\\
    &=j_*(p^*\circ\pi^*(\derived^\bounded(F))\otimes\mathcal{O}(-k,-2))\otimes\omega_{\tilde X}^\vee\\
    &=j_*(p^*\circ\pi^*(\derived^\bounded(F))\otimes\mathcal{O}(-k,-2)\otimes\omega_{\tilde X}|_{E}^\vee)\\
    &=j_*(p^*\circ\pi^*(\derived^\bounded(F))\otimes\mathcal{O}(-k,-2)\otimes\omega_{E}^\vee\otimes\mathcal{O}_{E}(E))\\
    &=j_*(p^*\circ\pi^*(\derived^\bounded(F))\otimes\mathcal{O}(-k,-2)\otimes\mathcal{O}(k+1, 2)\otimes\mathcal{O}(-1,-1))\\
    &=j_*(p^*\circ\pi^*(\derived^\bounded(F))\otimes\mathcal{O}(0,-1))\\
    &=\mathcal{A}(0, -1).
  \end{aligned}
\end{equation}
The result of the mutation is therefore
\begin{equation}
  \begin{aligned}
    \derived^\bounded(\tilde X)&=
    \langle
      \mathcal{A}(-k,-1),
      \mathcal{A}(-k+1,-1),
      \mathcal{A}(-k+1,0),
      \ldots,
      \mathcal{A}(-1,-1),
      \mathcal{A}(-1,0), \\
      &\qquad\qquad\LLL\tau^{\prime*}(\derived^\bounded(X')),
      \mathcal{A}(0,-1)
    \rangle,
  \end{aligned}
\end{equation}

Then for each~$m=-k+1, \dots, -2$, we right mutate the component~$\mathcal{A}(m, 0)$ through each of the categories~$\mathcal{A}(m,-1),\mathcal{A}(m+1,-1),\ldots,\mathcal{A}(-1,-1)$. Lacking control over the resulting category, we denote the result for now as
\begin{equation}
  \begin{aligned}
    \derived^\bounded(\tilde X)&=
    \langle
      \mathcal{A}(-k,-1),
      \mathcal{A}(-k+1,-1),
      \ldots,
      \mathcal{A}(-1, -1), \\
      &\qquad\qquad\mathcal{B}(-k+1,0),
      \ldots,
      \mathcal{B}(-2,0),
      \mathcal{A}(-1,0),
      \LLL\tau^{\prime*}\derived^\bounded(X'),\mathcal{A}(0,-1)
    \rangle,
  \end{aligned}
\end{equation}
where for all~$-k+1\leq m\leq -2$, we denote
\begin{equation}
  \begin{aligned}
    \mathcal{B}(m,0)
    &\coloneqq\RR_{\langle\mathcal{A}(m,-1),\mathcal{A}(m+1,-1),\dots,\mathcal{A}(-1,-1)\rangle}\mathcal{A}(m, 0) \\
    &=\RR_{\mathcal{A}(-1, -1)}\circ \dots\circ\RR_{\mathcal{A}(m+1, -1)}\circ \RR_{\mathcal{A}(m, -1)}\mathcal{A}(m, 0).
  \end{aligned}
\end{equation}
The identification of the mutation through the subcategory generated by a sequence of subcategories and the composition of mutations follows from e.g.~\cite[lemma~2.2(i)]{MR3987870}.

Finally we right mutate the subcategory~$\langle\mathcal{B}(-k+1,0),\ldots,\mathcal{B}(-2,0),\mathcal{A}(-1,0),\LLL\tau^{\prime*}\derived^\bounded(X')\rangle$ through~$\mathcal{A}(0, -1)$. The result is
\begin{equation}
  \label{eqn:mutated-sod}
  \begin{aligned}
    \derived^\bounded(\tilde X)
    &=
    \langle
      \mathcal{A}(-k,-1),
      \mathcal{A}(-k+1,-1),
      \ldots,
      \mathcal{A}(-1,-1),
      \mathcal{A}(0,-1), \\
      &\qquad\qquad\RR_{\mathcal{A}(0,-1)}\mathcal{B}(-k+1,0),
      \ldots,
      \RR_{\mathcal{A}(0,-1)}\mathcal{B}(-2,0),
      \RR_{\mathcal{A}(0,-1)}\mathcal{A}(-1,0), \\
      &\qquad\qquad\RR_{\mathcal{A}(0,-1)}\LLL\tau^{\prime*}\derived^\bounded(X')
    \rangle.
  \end{aligned}
\end{equation}
As mutations induce equivalences between the original and the mutated components, in the previous decomposition, each component except the last one is equivalent to~$\derived^\bounded(F)$.

Comparing the semiorthogonal decomposition \eqref{eqn:Blowup1} with the mutated semiorthogonal decomposition \eqref{eqn:mutated-sod} obtained in the last step, we deduce the following equality of subcategories of~$\derived^\bounded(\tilde X)$:
\begin{equation}
  \label{eqn:X=FFX'embedded}
  \begin{aligned}
    \LLL\tau^*\derived^\bounded(X)&=
    \langle
      \RR_{\mathcal{A}(0,-1)}\mathcal{B}(-k+1,0),
      \ldots,
      \RR_{\mathcal{A}(0,-1)}\mathcal{B}(-2,0),
      \RR_{\mathcal{A}(0,-1)}\mathcal{A}(-1, 0), \\
      &\qquad\qquad\RR_{\mathcal{A}(0,-1)}\LLL\tau^{\prime*}\derived^\bounded(X')\rangle.
  \end{aligned}
\end{equation}
By construction, each component except the last one is equivalent to~$\derived^\bounded(F)$.

By the fully faithfulness of~$\LLL\tau^*\colon\derived^\bounded(X)\to\derived^\bounded(\tilde X)$, its right adjoint~$\RRR\tau_*\colon\derived^\bounded(\tilde X)\to\derived^\bounded(X)$ induces an equivalence between~$\LLL\tau^*\derived^\bounded(X)$ and~$\derived^\bounded(X)$. Applying this equivalence to~\eqref{eqn:X=FFX'embedded}, we get a semiorthogonal decomposition
\begin{equation}
  \label{eqn:X=FFX'}
  \begin{aligned}
    \derived^\bounded(X)&=
    \langle
      \RRR\tau_*\RR_{\mathcal{A}(0,-1)}\mathcal{B}(-k+1,0),
      \ldots,
      \RRR\tau_*\RR_{\mathcal{A}(0,-1)}\mathcal{B}(-2,0),
      \RRR\tau_*\RR_{\mathcal{A}(0,-1)}\mathcal{A}(-1,0), \\
      &\qquad\qquad\RRR\tau_*\RR_{\mathcal{A}(0,-1)}\LLL\tau^{\prime*}\derived^\bounded(X')
    \rangle.
  \end{aligned}
\end{equation}
It remains to compute the components appearing in this decomposition. We do this via the following trivial lemma, which says that if a functor annihilates a subcategory, then the mutation through this subcategory does not change the image by this functor.

\begin{lemma}
  \label{lem:MutateThruKer}
  Let~$\mathcal{A}$ be an admissible triangulated subcategory of a triangulated subcategory~$\mathcal{T}$. If~$\Psi\colon\mathcal{T}\to \mathcal{T'}$ is a triangulated functor to another triangulated category such that~$\Psi(\mathcal{A})=0$, then
  \begin{equation}
    \Psi(\mathcal{A}^{\perp})=\Psi(\mathcal{T})=\Psi({}^{\perp}\mathcal{A}).
  \end{equation}
  Note that ${}^{\perp}\mathcal{A}=\RR_{\mathcal{A}}\mathcal{A}^{\perp}$.
\end{lemma}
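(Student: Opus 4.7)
The statement is essentially a formal consequence of the existence of the two semiorthogonal decompositions associated with an admissible subcategory, so the plan is to unpack these decompositions and apply $\Psi$ to the canonical distinguished triangles they produce.

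More precisely, admissibility of~$\mathcal{A}\subset\mathcal{T}$ gives two semiorthogonal decompositions
\begin{equation}
  \mathcal{T}=\langle \mathcal{A}^{\perp}, \mathcal{A}\rangle=\langle \mathcal{A}, {}^{\perp}\mathcal{A}\rangle.
\end{equation}
For any object~$T\in \mathcal{T}$, the first decomposition produces a functorial distinguished triangle
\begin{equation}
  A\to T\to B\xrightarrow{+1}
\end{equation}
with~$A\in \mathcal{A}$ and~$B\in \mathcal{A}^{\perp}$. Applying the triangulated functor~$\Psi$ and using the hypothesis~$\Psi(A)=0$ yields an isomorphism~$\Psi(T)\cong \Psi(B)$ in~$\mathcal{T}'$. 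Since the reverse inclusion~$\Psi(\mathcal{A}^{\perp})\subset \Psi(\mathcal{T})$ is tautological, this proves~$\Psi(\mathcal{T})=\Psi(\mathcal{A}^{\perp})$. The equality~$\Psi(\mathcal{T})=\Psi({}^{\perp}\mathcal{A})$ is obtained in exactly the same way from the second decomposition, now using a triangle~$C\to T\to A'\xrightarrow{+1}$ with~$C\in {}^{\perp}\mathcal{A}$ and~$A'\in \mathcal{A}$.

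There is no real obstacle here: the only subtlety is to make sure that the vanishing hypothesis~$\Psi(\mathcal{A})=0$ is understood as vanishing on all objects of~$\mathcal{A}$ (so that both~$\Psi(A)=0$ and~$\Psi(A')=0$), which is automatic from the formulation. The parenthetical remark~${}^{\perp}\mathcal{A}=\RR_{\mathcal{A}}\mathcal{A}^{\perp}$ is just the standard fact that right mutation through~$\mathcal{A}$ sends~$\mathcal{A}^{\perp}$ to~${}^{\perp}\mathcal{A}$, and it is what will be used in the application of the lemma to identify the components~$\RRR\tau_*\RR_{\mathcal{A}(0,-1)}(-)$ appearing in~\eqref{eqn:X=FFX'}.
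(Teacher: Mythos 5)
Your proof is correct, and it is exactly the argument the paper has in mind: the authors label the lemma ``trivial'' and omit the proof, the intended content being precisely the application of~$\Psi$ to the two canonical decomposition triangles coming from~$\mathcal{T}=\langle\mathcal{A}^{\perp},\mathcal{A}\rangle=\langle\mathcal{A},{}^{\perp}\mathcal{A}\rangle$. Nothing is missing.
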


Now, for any integer~$m$, we claim that the functor~$\RRR\tau_*$ annihilates the subcategory~$\mathcal{A}(m,-1)$. Indeed, for any~$\mathcal{E}\in \derived^\bounded(F)$ we have
\begin{equation}
  \begin{aligned}
    \RRR\tau_*\circ j_*(p^*\circ\pi^*(\mathcal{E})\otimes\mathcal{O}(m,-1))
    &=\RRR\tau_*\circ j_*(p^*\circ\pi^*(\mathcal{E})\otimes p^*\mathcal{O}_{\pi}(m)\otimes p^{\prime*}\mathcal{O}_{\pi'}(-1))\\
    &=i_*\circ \RRR p_{*}(p^*\circ \pi^*(\mathcal{E})\otimes \mathcal{O}_\pi(m))\otimes p^{\prime*}\mathcal{O}_{\pi'}(-1))\\
    &=i_*(\pi^*(\mathcal{E})\otimes \mathcal{O}_\pi(m)\otimes\RRR p_*\circ p^{\prime*}(\mathcal{O}_{\pi'}(-1)))
  \end{aligned}
\end{equation}
However by the base change formula we have that
\begin{equation}
  \RRR p_*\circ p^{\prime*}(\mathcal{O}_{\pi'}(-1))=\pi^*\circ\RRR\pi_*'\mathcal{O}_{\pi'}(-1)=0
\end{equation}
which proves the necessary vanishing.

Applying \cref{lem:MutateThruKer} to the functor~$\RRR\tau_*$, we see that a mutation through any category of the form~$\mathcal{A}(m,-1)$ with~$m\in \mathbb{Z}$ does not change the image under~$\RRR\tau_*$. In particular we have
\begin{equation}
  \RRR\tau_*\RR_{\mathcal{A}(0,-1)}\mathcal{B}(m,0)=\RRR\tau_*\mathcal{B}(m,0)=\RRR\tau_*\mathcal{A}(m,0).
\end{equation}
Therefore, the semiorthogonal decomposition \eqref{eqn:X=FFX'} is nothing else but the following:
\begin{equation}
  \label{eqn:X=FFX'NoMutation}
  \derived^\bounded(X)=\langle\RRR\tau_*\mathcal{A}(-k+1,0),\ldots,\RRR\tau_*\mathcal{A}(-2,0),\RRR\tau_*\mathcal{A}(-1,0),\RRR\tau_*\circ\LLL\tau^{\prime*}\derived^\bounded(X')\rangle.
\end{equation}
Finally, since
\begin{equation}
  \begin{aligned}
    \RRR\tau_*\circ j_*(p^*\circ\pi^*\derived^\bounded(F)\otimes\mathcal{O}(n, 0))
    &=i_*\circ\RRR p_*(p^*\circ\pi^*\derived^\bounded(F)\otimes p^*\mathcal{O}_\pi(n))\\
    &=i_*(\pi^*\derived^\bounded(F)\otimes\mathcal{O}_\pi(n))\\
    &=\Phi_n(\derived^\bounded(F)),
  \end{aligned}
\end{equation}
where~$\Phi_n$ is defined in \eqref{equation:Phi-definition}, the decomposition \eqref{eqn:X=FFX'NoMutation} is exactly the one in \cref{theorem:flip}\ref{enumerate:part-2}.

\newpage
\renewcommand*{\bibfont}{\small}
\printbibliography

\emph{Pieter Belmans}, \texttt{pieter.belmans@uni.lu} \\
\textsc{Université du Luxembourg, 6, Avenue de la Fonte, L-4364 Esch-sur-Alzette, Luxembourg}

\emph{Lie Fu}, \texttt{lie.fu@math.unistra.fr} \\
\textsc{Université de Strasbourg, Institut de Recherche Mathématique Avancée (IRMA), 7 rue René-Descartes, 67084 Strasbourg Cedex, France}

\emph{Theo Raedschelders}, \texttt{theo.raedschelders@vub.be} \\
\textsc{Vrije Universiteit Brussel, Pleinlaan 2, 1050 Brussels, Belgium}

%\newpage
%\printbibitembibliography

\end{document}